\documentclass[a4paper,12pt]{article}

\usepackage{mathtools}
\usepackage[latin1]{inputenc}
\usepackage[all]{xy}
\usepackage{float}
\usepackage{amsmath,amssymb,amsfonts,amsthm,mathrsfs}
\usepackage{enumerate}
\usepackage{color,fancybox,graphicx}
\usepackage{url}
\usepackage{psfrag}
\usepackage{graphicx,psfrag}
\usepackage{epsfig}
\usepackage{epstopdf}
\usepackage{algorithm, algorithmic}
\usepackage{pstricks}
\usepackage{cancel,ulem}
\renewcommand{\em}{\it} 

\setlength{\parindent}{0cm}
\numberwithin{equation}{section}


\newcommand{\overbar}[1]{\mkern 1.5mu\overline{\mkern-1.5mu#1\mkern-1.5mu}\mkern 1.5mu}

\renewcommand{\le}{\leqslant}
\renewcommand{\leq}{\leqslant}
\renewcommand{\ge}{\geqslant}
\renewcommand{\geq}{\geqslant}


\newcommand{\E}{\mathbb{E}}

\newcommand{\N}{\mathbb{N}}

\newcommand{\R}{\mathbb{R}}

\renewcommand{\P}{\mathbb{P}}
\renewcommand{\L}{\mathbb{L}}

\newcommand{\calF}{\mathcal{F}}

\newcommand{\calX}{\mathcal{X}}

\newcommand{\un}{{\mathbf{1}}}
\renewcommand{\d}{\delta}

\newcommand{\V}{\mathbb V}

\newcommand{\argmin}{{\mathrm{arg}\min}}

\newcommand{\eps}{\varepsilon}
\newcommand{\ph}{\varphi}



\renewcommand{\d}{ {\, \rm d}}

\newcommand{\eqdef}{:=}
\newcommand{\one}{{\mathbf{1}}}

\newcommand{\set}[1]{\left\{#1\right\}}

\newcommand{\p}[1]{ \left(#1\right) }
\renewcommand{\b}[1]{ \left [#1\right ] }

\newcommand{\norm}[1]{\left\Vert#1\right\Vert}

 
 

\theoremstyle{plain}
\newtheorem{The}{Theorem}[section]
\newtheorem{Lem}[The]{Lemma}
\newtheorem{Pro}[The]{Proposition}

\newtheorem{Cor}[The]{Corollary}

\newtheorem{Def}[The]{Definition}
\newtheorem{Alg}[The]{Algorithm}

\newtheorem{Ass}{Assumption}
\newtheorem{AssN}{Assumption}

\numberwithin{equation}{section}

\theoremstyle{definition}
\newtheorem{Rem}[The]{Remark}

\begin{document}

\begin{center}
{\sc \Large On the Asymptotic Normality of Adaptive Multilevel Splitting\footnote{This work was partially supported by the French Agence Nationale de la Recherche, under grant ANR-14-CE23-0012, and by the European Research Council under the European Union's Seventh Framework Programme (FP/2007-2013) / ERC Grant Agreement number 614492.}}
\vspace{0.5cm}

\end{center}

{\bf Fr\'ed\'eric C\'erou\footnote{Corresponding author.}}\\
{\it INRIA Rennes \& IRMAR, France }\\
\textsf{frederic.cerou@inria.fr}
\bigskip

{\bf Bernard Delyon}\\
{\it Universit\'e Rennes 1 \& IRMAR, France }\\
\textsf{bernard.delyon@univ-rennes1.fr}
\bigskip

{\bf Arnaud Guyader}\\
{\it Sorbonne Universit\'e \& CERMICS, France }\\
\textsf{arnaud.guyader@upmc.fr}
\bigskip

{\bf Mathias Rousset}\\
{\it INRIA Rennes \& CERMICS, France }\\
\textsf{mathias.rousset@inria.fr}
\bigskip

\medskip

\begin{abstract}
\noindent {\rm
Adaptive Multilevel Splitting (AMS for short) is a generic Monte Carlo method for Markov processes that simulates rare events and estimates associated probabilities. Despite its practical efficiency, there are almost no theoretical results on the convergence of this algorithm. The purpose of this paper is to prove both consistency and asymptotic normality results in a general setting. This is done by associating to the original Markov process a level-indexed process, also called a stochastic wave, and by showing that AMS can then be seen as a Fleming-Viot type particle system. This being done, we can finally apply general results on Fleming-Viot particle systems that we have recently obtained.
\medskip

\noindent {\em Index Terms} --- Sequential Monte Carlo, Fleming-Viot particle systems, Rare events simulation\medskip

\noindent {\em 2010 Mathematics Subject Classification}: 82C22, 65C05, 60K35, 60J60}

\end{abstract}

\tableofcontents
\section{Introduction}\label{intro}
In this article, we prove asymptotic results for the Adaptive Multilevel Splitting (AMS) algorithm used to estimate rares events or to simulate conditionally on rare events. This method belongs to the family of importance splitting algorithms, a set of techniques that date back to Kahn and Harris~\cite{kahn} and Rosenbluth and Rosenbluth~\cite{dim21} to analyze particle transmission energies and molecular polymer conformations. The adaptive version of this method was proposed in~\cite{cg2}. Here we consider the last particle version of this algorithm, introduced in~\cite{ghm} and presented in~\cite{cglp} in the context of molecular dynamics. Recently, this algorithm has been successfully applied to real world chemical computations in~\cite{tmsl16} as well as to Monte Carlo particle transport problems~\cite{ldlrd}.\medskip

To our knowledge, there are almost no theoretical results on the 
convergence of this algorithm, with the notable exception of the idealized case \cite{MR3446035,bgt14,MR3417480}. 
Note however that estimators of unnormalized averages are known to be unbiased in wide generality (see~\cite{bgglr15}). 
Under regularity assumptions discussed below, we give in the present paper an $L^2$-estimate as well as a Central Limit Theorem (CLT). In both cases, we consider the real algorithm, and not the idealized case. We also discuss the asymptotic variance given by the CLT.\medskip

The general framework is as follows. Given a stopped Markov process $(Y_s)_{s\geq 0}$ in a space $E$ and a function $\xi:E\to\R$ such that $\xi(Y_0)=0$ almost surely, the goal is to compute the probability that $\sup_s\xi(Y_s) > 1$ (the rare event), and the distribution of $Y$ given that $\sup_s\xi(Y_s) > 1$. In this context, AMS is an interacting particle system consisting of $N$ particles/trajectories $\p{ Y^{n} }_{n=1 \ldots N}$ simulated according to the distribution of the underlying process $Y$. At each iteration, the particle with minimal score with respect to $\xi$ is killed and another particle is cloned, so that the number of particles/trajectories remains constant and equal to $N$. The algorithm is stopped as soon as all particles have reached the level set $\{\xi > 1\}$. Then the probability is estimated through the number of iterations, and the final empirical distribution estimates the law of $Y$ conditioned by the event $\sup_{s} \xi(Y_s) > 1$.\medskip

The CLT that we obtain for this algorithm applies in the large population limit, that is, when $N$ goes to infinity. This CLT heavily relies on a CLT for Fleming-Viot particle systems that we have recently obtained~\cite{cdgr2}. The key point here is to remark that the AMS algorithm can be recast as a Fleming-Viot particle system by introducing a \textit{level-indexed process}, also called a stochastic wave in~\cite{MR0682731}, associated to the pair $(Y,\xi)$. The latter is obtained through a discontinuous \textit{time change}, where the levels induced by $\xi$ play the role of a new time parameter, and the associated particle state is given by the first entrance in successive level sets. \medskip

The CLT is obtained for diffusions under three main assumptions (referred to as Assumptions~\ref{ass:feller}, \ref{ass:up} and ~\ref{ass:minor}) on the pair $(Y,\xi)$. These assumptions include the case where $Y$ is a diffusion in $\R^d$ satisfying a stochastic differential equation (SDE) of the form
\begin{equation}
\label{eq.diffusion.V}
\d Y_s= b(Y_s) \d s + \sigma(Y_s) \d W_s,
\end{equation}
with smooth coefficients $(b,\sigma)$, and $\xi$ is a smooth function with compact level sets satisfying everywhere some non-degeneracy condition of the form $(\nabla \xi)^T  \sigma \neq 0$. \medskip

In particular, as explained in~\cite{tmsl16}, this algorithm can be applied to simulate so-called reactive trajectories in real-world chemical applications. To fix ideas, consider an overdamped system solution to the SDE 
$$\d Y_s= - \nabla V(Y_s) \d s + \sqrt{2 \beta^{-1}} \d W_s,$$
where $V$ is the interaction energy of the system, and $\beta^{-1}$ is the  temperature. Then, let $A\subset\{\xi<0\}$ denote a ``metastable" state, that is a thin energy level set around a local minimum of $V$. In this context, $\xi$ is called a ``reaction coordinate" and parametrizes a chemical reaction starting from an initial configuration modeled by $A$ up to a final configuration defined by $\set{ \xi > 1}$ (see Figure \ref{algo0xi}). Typically, the system undergoes a large number of quick excursions between the disjoint sets $A$ and $\set{\xi = 0}$. The latter may be simulated on the one hand, defining the initial distribution $\eta_0$ of $Y_0$ on $\set{\xi = 0}$ as an associated stationary distribution. Then, one needs to simulate the reactive trajectories defined as ${\cal L}\set{(Y_s)_{s \geq 0} | S_1 < S_A}$, which represents the rare event of interest. In particular, the associated mean time $\E\b{S_1 | S_1 < S_A}$ and  the probability $\P(S_1< S_A)$ are crucial for the estimation of the underlying chemical kinetics. It turns out that AMS is particularly efficient to estimate such quantities. The interested reader can find details and simulations in~\cite{tmsl16}.\medskip 

\begin{figure}
\begin{center}
\input{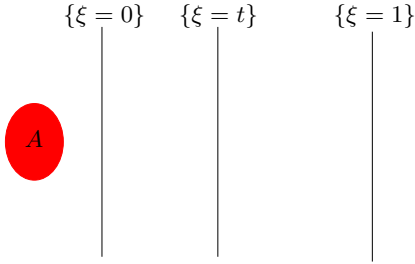}
\caption{The metastable state $A$ and the reaction coordinate $\xi(y_1,y_2)=y_1$.}
\label{algo0xi}
\end{center}
\end{figure}

Reformulating the AMS algorithm as a Fleming-Viot particle system and then applying the CLT for Fleming-Viot particle systems of~\cite{cdgr2} is in fact a quite generic method that may be applied to other types of underlying processes $Y$, for instance to diffusions with degenerate condition $(\nabla \xi)^T  \sigma = 0$, or to Piecewise Deterministic Markov Processes. However, in these cases, defining the associated level-indexed process and checking the assumptions of the CLT for the Fleming-Viot particle system requires extensive, specific analysis that is left for future work. \medskip

The paper is organized as follows. In Section~\ref{sec:set_result}, we introduce the AMS algorithm and state the main result of the article, namely Theorem \ref{gamma}. As mentioned before, the assumptions for the latter are illustrated on the diffusive case. In Section~\ref{sec:FV}, we reformulate the AMS algorithm as a Fleming-Viot particle system built from a so-called level-indexed process. Finally, applying~\cite{cdgr2}, this allows us to establish the desired result. Most of the proofs and technical results are gathered in the appendices.

\section{Setting, algorithm, and main result}\label{sec:set_result}

\subsection{Setting}

Let $E$ denote a Polish state space. If $\xi: E \to \R$ is a measurable function and $I$ a subset of $\R$, we denote
$$ \set{\xi \in I} = \xi^{-1}(I)=\set{y \in E,\ \xi(y) \in I}.$$

Besides, if $\mu$ is a probability distribution, $Y$ a random variable with law $\mu$, and $\ph:E\to\R$ a test function, we write
$$\V_\mu(\ph)\eqdef  \V(\ph(Y))=\E[\ph(Y)^2]-\E[\ph(Y)]^2=\mu(\ph^2)-\mu(\ph)^2.$$

Let $(Y_s)_{s \geq 0}$ denote a time homogeneous Markov process with continuous trajectories in $E$ that may be defined from any initial condition $y_0 \in E$. We also assume that the mapping $\xi: E \to \R$, called a level function, is continuous. In what follows, we suppose for simplicity that the law $\eta_0:={\cal L}(Y_0)$ is supported by the level set $\{\xi=0\}$, meaning that 
\begin{equation}\label{eq:initial}
\eta_0 (\xi=0)=1.
\end{equation}


For each $t\geq 0$, we denote the first entrance time 
in levels strictly greater than $t$ by
\[
  S_t \eqdef \inf \left\{s \geq 0,\  \xi(Y_s) > t \right\} \in [0,+\infty].
\]
Note that by continuity of $\xi$ and $Y$, for all $t\geq 0$ with $S_t < + \infty$, we have
\begin{equation}\label{aeicvazich}
\xi(Y_{S_{t}}) = t.
\end{equation}

Let $A$ denote a Borel set in $E$. By convention, in all what follows, the process $(Y_s)_{s \geq 0}$ is stopped at the random time $S_1\wedge S_A$ where
\[
  S_A \eqdef \inf \left\{s \geq 0,\  Y_s \in A \right\} \in [0,+\infty].
\]
%

Assuming that $$p_1 \eqdef \P({S_1} < {S_A})>0,$$  the goal of the algorithm is to estimate $p_1$, as well as the conditional distribution ${\cal L}( Y_{S_1}|{S_1} < {S_A})$.\medskip

Specific algorithms have been developed in order to efficiently simulate such events, especially when they are rare. The upcoming section recalls the last particle version of Adaptive Multilevel Splitting algorithm as introduced in~\cite{cglp}. The main goal of this paper is to prove the consistency and the asymptotic normality of this algorithm.\medskip

For simplicity, we will assume that almost surely
\begin{equation}\label{eq:stop_finite}
  S_1\wedge S_A < + \infty,
\end{equation}
which implies that the particles trajectories defined in the AMS algorithm are all defined on finite time intervals. While removing or modifying condition~\eqref{eq:initial} and especially condition~\eqref{aeicvazich} requires substantial changes in the definition of the level-indexed process in Section~\ref{sec:FV}, the condition~\eqref{eq:stop_finite} is merely technical and can be simply removed up to dealing with infinite length trajectories (see Section~\ref{sec:non_stop_path}).

\subsection{Adaptive Multilevel Splitting}\label{sec.algo}

From now on, the integer $N$ denotes the number of trajectories, also called particles. This sample size will stay unchanged all along the algorithm. Besides, random variables denoted with the superscript $n,j$, for instance $Z^{n,j}$, means that it concerns trajectory with index $n$ at iteration $j \geq 0$. Figure \ref{algo1} illustrates the first two steps of the algorithm in the case where $N=3$.

\begin{Alg}[Adaptive Multilevel Splitting]\label{alg:ams}
We start with a sample of the initial condition of the process $Y$, which means that
\begin{equation}\label{eq:ams_init}Y_0^{1,0},\dots,Y_0^{N,0}\ \overset{\rm i.i.d.}{\sim}\ \eta_0.\end{equation}

From each initial condition $Y_0^{n,0}$, we simulate a trajectory $(Y_s^{n,0})_{s\geq 0}$. We recall that the latter is stopped when hitting $A$ or level set $\set{ \xi > 1}$. Set $\tau_0=0$ and then iterate on $j \geq 1$:
\begin{enumerate}[(i)]
  \item For $1\leq n\leq N$, compute the score of each particle, meaning the supremum of the level $\xi$ along each particle's trajectory:
  $$
  \sup_{0 \leq s \leq S_A^{n,j-1} \wedge S_1^{n,j-1}} \, \xi(Y^{n,j-1}_s).
  $$
  Find the particle with the smallest score:
  \begin{equation}\label{eq:min_score}
 \left\{\begin{array}{l}
  N_j\eqdef\argmin_{n=1, \ldots, N} \sup_{0 \leq s \leq S_A^{n,j-1} \wedge S_1^{n,j-1}} \, \xi(Y^{n,j-1}_s)\\
  \tau_j:=\sup_{0 \leq s \leq S_A^{N_j,j-1} \wedge S_1^{N_j,j-1}} \xi(Y^{N_j,j-1}_s)
 \end{array} \right.
  \end{equation}
Under Assumptions~\ref{ass:feller} and~\ref{ass:up} below, a unique particle satisfies~\eqref{eq:min_score} (see Proposition~\ref{pro:well_posed}).
  \item Stop the algorithm if $\tau_j=1$. 
  \item for $n\neq N_j$, set $(Y_s^{n,j})_{s\geq 0}= (Y_s^{n,j-1})_{s\geq 0}$.
  \item Pick an index $M_j$ uniformly at random in $\set{1, \ldots, N} \setminus \set{N_j}$. Replace the trajectory with index $N_j$ with a resampled version of 
  the trajectory with index $M_j$, starting from the hitting time of level $\tau_j$, that is 
  \begin{itemize}
    \item set $\sigma_j\eqdef \inf\{s \geq 0,\ \xi(Y^{M_j,j}_s)>\tau_j\} < + \infty$,  
    \item for $s < \sigma_j$, set $Y^{N_j,j}_s= Y^{M_j,j}_s$,  
    \item for $s \geq \sigma_j$, simulate a new piece of trajectory $(Y_s^{N_j,j})_{s \geq \sigma_j}$ according to the law of the underlying process $(Y_s)_{s \geq 0}$ with initial condition $Y^{M_j,j}_{\sigma_j}$.
  \end{itemize}
\end{enumerate}
\end{Alg}

\begin{figure}
\begin{center}
\input{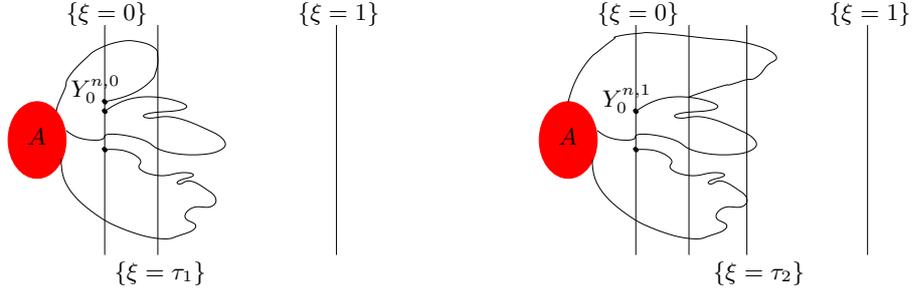}
\caption{The first two steps of AMS with $N=3$ trajectories.}
\label{algo1}
\end{center}
\end{figure}

Assumption~\ref{ass:up} below will ensure that almost surely, in the last step above,  
$$ \forall h > 0, \quad \sup_{s \in [\sigma_j,\sigma_j+h]} \xi(Y^{N_j,j}_s)>\xi(Y^{N_j,j}_{\sigma_j}).$$
In particular, this implies that the sequence $(\tau_j)_{j \geq 0}$ is strictly increasing. Moreover, Assumption~\ref{ass:minor} below will imply that this algorithm stops after a finite number of iterations almost surely (see Proposition~\ref{pro:well_posed}). \medskip

For any $t\in[0,1]$, let us denote $J_t$ the number of branchings of this algorithm between level $0$ and level $t$, that is
$$J_t \eqdef \sup\left\{j,\ \tau_j \leq t\right\},$$
which by definition satisfies
$$ \tau_{J_t} \leq t < \tau_{1+J_t} .$$

\begin{figure}
\begin{center}
\input{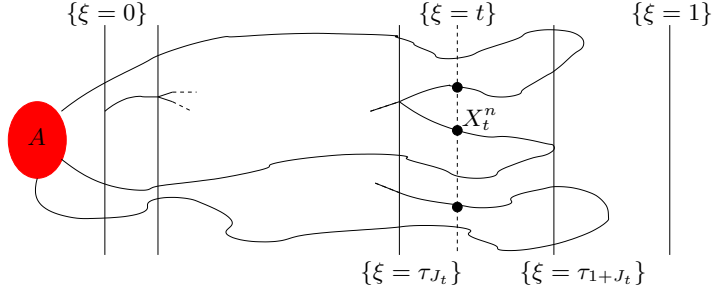}
\caption{The entrance points $X_t^n$ of the level set $\{\xi=t\}$.}
\label{algo2}
\end{center}
\end{figure}

Accordingly, the value of $j$ at the end of the algorithm is $J_1$, and the final particle system is given by the $N$ trajectories $(Y^{n,J_1}_{s})_{s \geq 0}$, $1\leq n\leq N$. By construction, all these trajectories reach the level set $\{\xi=1\}$ and are stopped at this specific time.\medskip

Similarly, for a given level $t \in [0,1]$, the particle trajectories after $J_t$ iterations are given by $(Y^{n,J_t}_{s})_{s \geq 0}$, and the associated entrance times are 
$$
S^n_t \eqdef \inf \{s \geq 0,\  \xi(Y_{s}^{n,J_t}) > t\},
$$
with entrance states $Y^{n,J_t}_{S^n_t}$. To lighten the notation and to prepare the definition of the level-indexed process in Section~\ref{sec:FV}, we denote the latter states (see Figure \ref{algo2})
$$X_t^n:=Y^{n,J_t}_{S^n_t}.$$
Thus, by \eqref{aeicvazich}, one has $\xi(X_t^n)=t$.
Then, for any test function $\ph$, we estimate the law of $Y_{S_t}$ given that $Y_{S_t} < Y_{S_A}$ by the empirical distribution  
$$\eta_t^N(\ph):=\frac{1}{N}\sum_{n=1}^N\ph(Y^{n,J_t}_{S^n_t})=\frac{1}{N}\sum_{n=1}^N\ph(X_t^n).$$
In the same vein, since exactly one trajectory is resampled at each step of the algorithm, our estimator for $p_t=\P(Y_{ S_t}< Y_{S_A})$ is
\begin{align*}p_t^N:=\left(1-\frac{1}{N}\right)^{J_t}.\end{align*}
It was already established in~\cite{bgglr15} (see also~\cite{cdgr2} and the discussion in Section~\ref{sec:FV} of the present article) that $p_t^N \times \eta_t^N(\ph)$ is in fact an unbiased estimator:
$$\E\left[p_t^N \times \eta_t^N(\ph)\right] = \E\left[\ph(Y_{S_t}) \un_{S_t < S_A}\right].$$

\subsection{Assumptions}

In this section, we gather some sufficient conditions to ensure the well-posedness of the previous algorithm and to obtain the main results of Section \ref{sec:main_result}. We illustrate these assumptions in the case of a strong solution of a Stochastic Differential Equation with smooth coefficients. \medskip


Let us begin with some topological and regularity conditions.

\begin{Ass}[Feller regularity]\label{ass:feller}
 $E$ is a locally compact state space, $\xi$ is continuous, $\eta_0 (\xi=0)=1$, $A \subset \set{\xi<0}$, 
 and $(Y_s)_{s \geq 0}$ is a Feller diffusion process, i.e., 
 a Feller process with continuous trajectories.
\end{Ass}

\begin{Rem}
 We recall that Feller processes are strong Markov with respect to their natural filtration denoted $\p{\calF^Y_s = \sigma\p{Y_{s'},0\leq s' \leq s}}_{s \geq 0}$, the latter being 
 necessarily right-continuous  (see for example Theorem 2.7 page 169 in  \cite{ethierkurtz}). 
\end{Rem}

For the next assumption, we recall the notation $S_t \eqdef  \inf \{s \geq 0,\   \xi(Y_s) > t \}$ as well as $
S_{B} \eqdef \inf \{s \geq 0,\   Y_s \in B\}$ for any set $B \subset E$. Besides, ${\mathring A}$ and ${\bar A}$ denote respectively the interior and the closure of the set $A$. 



\begin{Ass}[Almost sure strict entrance]\label{ass:up}
For any $t \in [0,1]$ and $y \in E$ such that $\xi(y) = t$,
\begin{equation}\label{eq:up}
 \P_{y}\p{S_t = 0} = 1.
\end{equation}
In the same way, for all $y \in \set{ \xi =0}$,
\begin{equation}\label{eq:down}
 \P_{y}\p{ S_{\bar A} = S_{\mathring A} } = 1.
\end{equation}
\end{Ass}

By the strong Markov property, (\ref{eq:up}) ensures that $S_t$, defined as the first entrance time in levels strictly greater than $t$, is in fact equal to the hitting time of level $t$, that is
$$S_t = \inf \left\{s \geq 0,\  \xi(Y_s) = t \right\}.$$ 

Besides, since the process $Y$ has continuous trajectories, (\ref{eq:down}) obviously implies that for all $y \in \set{ 0 \leq \xi \leq 1}$,
\begin{equation}\label{eq:downbis}
 \P_{y}\p{ S_{\bar A} = S_{\mathring A} } = 1.
\end{equation}

Moreover, we will show in Lemma~\ref{lem:jump_dens} that (\ref{eq:down}) and the strong Markov property imply that the jump times of the c\`adl\`ag process $t \mapsto Y_{S_t}$ have  atomless distributions. This property is indeed required in~\cite{cdgr2} in order to get the CLT for Fleming-Viot particle systems.\medskip

Let us now define the integral operator
\begin{equation}\label{eq:cont_dir}
 q(\ph)(y) \eqdef \E_y \b{\ph(Y_{S_1}) \un_{S_1 < S_A}}, \qquad y \in \set{0 \leq \xi \leq 1}.
\end{equation}
 Denoting $C_b(\{\xi=1\})$ the set of continuous and bounded functions on the level set $\{\xi=1\}$, we will prove in Lemma~\ref{lem:H1ii} that under Assumptions~\ref{ass:feller} and~\ref{ass:up}, if $\ph \in C_b(\{\xi=1\})$, then $q(\ph)$ is bounded and continuous on $\set{0 \leq \xi \leq 1}$. The proof is based on a general result given in the appendix, namely Lemma~\ref{lem:time_fin}. The integral operator $q$ will prove crucial in the remainder as it will appear in the asymptotic variance of the CLT.\medskip

Our next assumption ensures a uniform control on the probabilities of success, namely $\P_{y}\p{ S_{1} < S_A}$, with respect to the initial condition.

\begin{Ass}[Uniform positive probability of reaching the last level]\label{ass:minor}
We assume that almost surely
$$ S_A \wedge S_1 < + \infty,$$
as well as 
$$  \inf_{y \in \set{\xi = 0} }\P_{y}\p{ S_{1} < S_A} > 0.$$ 
\end{Ass}
First remark that under Assumption~\ref{ass:feller}, by the strong Markov property, one has 
$$  \inf_{y \in \set{0\leq \xi \leq 1} }\P_{y}\p{ S_{1} < S_A} = \inf_{y \in \set{\xi = 0} }\P_{y}\p{ S_{1} < S_A}. $$

As mentioned before, the condition $S_A \wedge S_1 < + \infty$ is a technical simplification of minor significance that may in fact be removed, see Section~\ref{sec:non_stop_path}. \medskip

In Section~\ref{sec:var_ass}, a stronger but easier to check variant of the infimum condition in Assumption~\ref{ass:minor} is presented. \medskip


Let us illustrate the previous conditions in a more specific framework. It turns out that Assumptions~\ref{ass:feller}, \ref{ass:up} and~\ref{ass:minor} are satisfied for elliptic diffusions in a bounded domain. More precisely, let $Y_s \in \R^d$ be a solution to the SDE
\begin{equation}
\label{eq.diffusion.X}
\d Y_s = b(Y_s) \d s+\sigma(Y_s) \d W_s,
\end{equation}
where $b$ and $\sigma$ are functions from $\R^d$ to respectively $\R^d$ and $\R^{d \times n}$, with $n \geq 1$. 
We denote as usual $a = \sigma \sigma^T$. Then we have the following result, whose proof is detailed in Appendix~\ref{sec:proof_diff}. 

\begin{Lem}\label{lem:proof_diff}
Let $Y$ be solution to~\eqref{eq.diffusion.X}. Assume that
\begin{itemize}
 \item[(a)] $\sigma, b$ are in $C^2(\R^d)$ with bounded derivatives of order $i=0,1,2$ on $\set{-1 \leq \xi \leq 1}$.
 \item[(b)] $\xi$ is in $C^2(\R^d)$ with bounded derivatives of order $i=1,2$ on $\set{-1 \leq \xi \leq 1}$. We also assume that $$A ={\bar A}= \set{\xi \leq -1}.$$
 \item[(c)] There exists $\delta > 0$ such that $(\nabla \xi)^T  a \nabla \xi \geq \delta$ on $\set{-1 \leq \xi \leq 1}$.
\end{itemize}
Then Assumptions~\ref{ass:feller},~\ref{ass:up} and~\ref{ass:minor} hold true.
\end{Lem}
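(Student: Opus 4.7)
My strategy is to reduce to a globally well-behaved SDE and then analyze the one-dimensional level process $Z_s := \xi(Y_s)$. Since every AMS trajectory is stopped at $T := S_A \wedge S_1$, only the restriction of $(b, \sigma, \xi)$ to $\{-1 \leq \xi \leq 1\}$ enters the law of the stopped process; by (a)-(b) I can smoothly extend these coefficients outside this slab so as to be globally $C^2$ with bounded derivatives, without changing the stopped process. Standard SDE theory then yields a unique strong solution $Y$ on the locally compact space $\R^d$ which is Feller with continuous trajectories, and since $A = \{\xi \leq -1\} \subset \{\xi < 0\}$, Assumption~\ref{ass:feller} is established. Itô's formula then gives
$$\d Z_s = \mu(Y_s)\d s + (\nabla \xi)^T \sigma(Y_s)\d W_s, \qquad \mu := (\nabla \xi)^T b + \tfrac{1}{2}\mathrm{tr}(a \nabla^2 \xi),$$
where, by (a)--(c), $|\mu| \leq K$ and $\delta \leq (\nabla \xi)^T a \nabla \xi \leq M$ on $\{-1 \leq \xi \leq 1\}$ for constants $K, M > 0$. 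By Dambis-Dubins-Schwarz the martingale part of $Z$ is a time-changed Brownian motion whose clock lies in $[\delta s, Ms]$.

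For Assumption~\ref{ass:up}, I would exploit that the Brownian component of $Z$ dominates its bounded drift at small times. Indeed, $|\int_0^s \mu\,\d r| = O(s)$, while by the law of the iterated logarithm applied after the DDS time change, the martingale part of $Z - Z_0$ achieves both strictly positive and strictly negative values of order $\sqrt{s \log\log(1/s)} \gg s$ in every right-neighborhood of $0$. Hence, starting from any $y$ with $\xi(y) = t$, $Z$ strictly exceeds $t$ at arbitrarily small positive times almost surely, which is exactly (\ref{eq:up}); applied to $-\xi$, the same argument produces a symmetric immediate downcrossing property at any level. For (\ref{eq:down}), continuity of $Z$ forces $\xi(Y_{S_{\bar A}}) = -1$ on $\{S_{\bar A} < \infty\}$, and the strong Markov property at $S_{\bar A}$ combined with immediate downcrossing from level $-1$ yields $S_{\mathring A} = S_{\bar A}$ almost surely.

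For Assumption~\ref{ass:minor}, the main tool is the exponential submartingale $\phi(Z_s) := e^{\lambda Z_s}$. By Itô its drift density on $[0, T]$ equals
$$\phi(Z_s)\bigl[\lambda \mu(Y_s) + \tfrac{\lambda^2}{2}(\nabla \xi)^T a \nabla \xi(Y_s)\bigr] \geq e^{-\lambda}\bigl[\tfrac{\lambda^2 \delta}{2} - \lambda K\bigr] =: c,$$
which is strictly positive as soon as $\lambda > 2K/\delta$. Since $\phi(Z) \leq e^\lambda$ up to $T$, the submartingale inequality forces $\E[T \wedge s] \leq (e^\lambda - 1)/c$ uniformly in $s$, hence $T < \infty$ almost surely. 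Then $Z_T \in \{-1, 1\}$ by continuity, and optional stopping yields, for every $y \in \{\xi = 0\}$,
$$1 = \phi(0) \leq \E_y\bigl[\phi(Z_T)\bigr] = e^\lambda\, \P_y(S_1 < S_A) + e^{-\lambda}\bigl(1 - \P_y(S_1 < S_A)\bigr),$$
so that $\P_y(S_1 < S_A) \geq 1/(e^\lambda + 1) > 0$. The main subtle point I anticipate is precisely this uniformity in $y$: it is crucial that $\lambda$ depends only on the universal constants $K, \delta$ extracted from the global bounds in (a)--(c), not on the starting point itself, so the lower bound does not degenerate even if $\{\xi = 0\}$ happens to be non-compact.
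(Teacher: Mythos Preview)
Your proof is correct. Steps~1 and~2 (Assumptions~\ref{ass:feller} and~\ref{ass:up}) coincide with the paper's approach: both apply It\^o to $\xi(Y_s)$, represent the martingale part as a time-changed Brownian motion via Dambis--Dubins--Schwarz, and invoke the law of the iterated logarithm to show that the martingale oscillation of order $\sqrt{s\log\log(1/s)}$ dominates the $O(s)$ drift near $s=0$, yielding immediate strict up- and down-crossing. Your explicit mention of the smooth extension of $(b,\sigma)$ outside the slab is a point the paper leaves implicit.

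For Assumption~\ref{ass:minor} the two proofs diverge. The paper constructs a pathwise lower barrier: it introduces $dZ_s=-\lambda_0\Sigma_s^2\,ds+\Sigma_s\,d\widetilde W_s$ with $\lambda_0$ chosen so that $Z_s\leq\xi(Y_s)$, then time-changes $Z$ into a Brownian motion with constant drift $-\lambda_0$ and reads off both $S_A\wedge S_1<\infty$ and the uniform lower bound on $\P_y(S_1<S_A)$ from explicit hitting probabilities of drifted Brownian motion. Your argument instead uses the exponential Lyapunov function $\phi(z)=e^{\lambda z}$: choosing $\lambda>2K/\delta$ makes $\phi(Z_{s\wedge T})$ a bounded submartingale with drift bounded below by $c>0$, which simultaneously forces $\E[T]<\infty$ and, via optional stopping at $T$, gives $\P_y(S_1<S_A)\geq 1/(e^\lambda+1)$. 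Your route is shorter and avoids the comparison/time-change machinery entirely; the paper's route has the mild advantage of being pathwise and of giving the exit probability in a form directly tied to drifted Brownian motion, which some readers may find more transparent. Both arguments deliver the crucial feature you highlight: the lower bound depends only on $K$ and $\delta$, hence is uniform over $\{\xi=0\}$ even when that level set is non-compact.
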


\begin{Rem} Condition~$(c)$ ensures that the martingale part of the process $t \mapsto \xi(Y_t)$ has a strictly positive quadratic variation. It may happen that $(Y_s)_{s \geq 0}$ is Feller, $\xi$ is smooth, but~\eqref{eq:up} does not hold without the addition of Condition~$(c)$. Consider for example the case where $(Y_s)_{s \geq 0}$ is solution to an Ordinary Differential Equation. As a consequence, Assumption~\ref{ass:up} has to be modified without Condition~$(c)$,
for instance one may need to resort to an {\it ad hoc} restriction of the state space for which~\eqref{eq:up} is still satisfied. 
\end{Rem}

\subsection{Main result}\label{sec:main_result}

For any test function $\ph$ and any $t \in [0,1]$, let us define the unnormalized measure $\gamma_t$ by
$$\gamma_t(\ph):=\E\b{\ph(Y_{S_t})\one_{S_t < S_A}},$$
so that $\gamma_0=\eta_0$. Accordingly, the probability that the process $(Y_s)_{s \geq 0}$ reaches level $t$ 
is $p_t:=\gamma_t(\one)=\P(S_t < S_A)$, and the law of $Y_{S_t}$ given that $S_t < S_A$ is denoted $\eta_t$ 
and satisfies $\eta_t(\ph):=\gamma_t(\ph)/\gamma_t(\one)$. \medskip

The purpose of the AMS Algorithm~\ref{alg:ams} is to approximate the previous quantities. 
Namely, for any $t\in[0,1]$, let us recall that the probability $p_t$ is estimated by
$$p_t^N:=\left(1-\frac{1}{N}\right)^{J_t},$$
where $J_t$ denotes the number of iterations necessary to reach level $t$. The measures $\eta_t$ and $\gamma_t$ are respectively estimated by
$$\eta_t^N(\ph):=\frac{1}{N}\sum_{n=1}^N\ph(Y^{n,J_t}_{S^n_t})=\frac{1}{N}\sum_{n=1}^N\ph(X_t^n)\hspace{1cm}\mbox{and}\hspace{1cm}\gamma_t^N(\ph):=p_t^N\eta_t^N(\ph).$$ 

Our first statement is a well-posedness result. As will be explained in Section \ref{oliach}, it is connected to the first point of Theorem \ref{th:cdgr2} and to Lemma \ref{lem:check_ass}.

\begin{Pro}\label{pro:well_posed}
 Under Assumptions~\ref{ass:feller} and~\ref{ass:up}, the AMS Algorithm~\ref{alg:ams} is well-posed in the sense that there is only one particle with minimal score in~\eqref{eq:min_score}. Besides, under Assumption~\ref{ass:minor}, the AMS Algorithm~\ref{alg:ams} is non-explosive in the sense that the algorithm stops after a finite number of iterations almost surely.
\end{Pro}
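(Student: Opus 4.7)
The two assertions are independent: uniqueness of the argmin at each iteration rests on the strict-entrance Assumption~\ref{ass:up}, whereas finite termination rests on Assumption~\ref{ass:minor}. For convenience I write
$$M^{n,j} \eqdef \sup_{0 \le s \le S_A^{n,j} \wedge S_1^{n,j}} \xi(Y_s^{n,j})$$
for the score of trajectory $n$ after iteration $j$, so that $\tau_{j+1}$ in~\eqref{eq:min_score} equals $\min_{n} M^{n,j}$ and $N_{j+1}$ is an argmin.

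\textbf{Part 1 (uniqueness).} The cornerstone is the following atomless property: for every $y \in \{0 \le \xi \le 1\}$ and every $t \in [0,1)$, writing $M \eqdef \sup_{0 \le s \le S_A \wedge S_1} \xi(Y_s)$, one has $\P_y(M = t) = 0$. Indeed, on $\{M = t\}$ the trajectory must attain level $t$ at some time $S_t^Y \le S_A \wedge S_1$ by continuity of $\xi \circ Y$, the boundary cases $S_t^Y = S_A$ and $S_t^Y = S_1$ being ruled out respectively by $A \subset \{\xi < 0\}$ (so $\xi(Y_{S_A}) \le 0 < t$) and by $t < 1 = \xi(Y_{S_1})$. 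Applying the strong Markov property at $S_t^Y$ and Assumption~\ref{ass:up} to the shifted process, $Y$ immediately exceeds level $t$, contradicting $M = t$. Uniqueness then follows by induction on $j$. At $j = 0$ the scores $(M^{n,0})_{1 \le n \le N}$ are i.i.d., and Fubini together with the atomless property yields $\P(M^{n_1,0} = M^{n_2,0} < 1) = 0$ for every $n_1 \ne n_2$, so the minimum is either uniquely attained or equal to $1$ (in which case step (ii) of Algorithm~\ref{alg:ams} terminates). For the inductive step, condition on the $\sigma$-algebra $\calF_{j-1}$ generated by the first $j-1$ iterations. Only the score of the resampled particle $N_{j-1}$ changes; by the strong Markov property at $\sigma_{j-1}$ and because $Y^{M_{j-1},j-1}_{\sigma_{j-1}} \in \{\xi = \tau_{j-1}\}$ by~\eqref{aeicvazich}, the law of $M^{N_{j-1},j-1}$ given $\calF_{j-1}$ is atomless on $[\tau_{j-1}, 1)$, while the other scores are $\calF_{j-1}$-measurable and pairwise distinct by the induction hypothesis; hence almost surely the new minimum is uniquely attained.

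\textbf{Part 2 (non-explosion).} Let $F_j$ be the number of trajectories with score $M^{n,j} < 1$ at the end of iteration $j$; the algorithm halts precisely when $F_j = 0$. Whenever $F_{j-1} > 0$, the killed particle has score $\tau_j < 1$ and is replaced by a fresh trajectory starting on $\{\xi = \tau_j\} \subset \{0 \le \xi \le 1\}$. Assumption~\ref{ass:minor}, together with the strong-Markov remark made right after its statement, ensures that conditionally on $\calF_{j-1}$ this fresh trajectory reaches level~$1$ before $A$ with probability at least
$$p \eqdef \inf_{y \in \{\xi = 0\}} \P_y(S_1 < S_A) > 0,$$
in which case $F_j = F_{j-1} - 1$. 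Thus $F_j$ is a non-increasing integer-valued process whose downward transitions occur with conditional probability at least $p$; its hitting time of $0$ from $F_0 \le N$ is stochastically dominated by a sum of $N$ independent geometric random variables with parameter $p$, hence is almost surely finite.

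\textbf{Main obstacle.} The technical heart of the argument is the atomless-score lemma in Part~1: one must carefully exclude every way the running supremum could equal $t$, notably the degenerate cases where the hitting time $S_t^Y$ coincides with $S_A$ or $S_1$. The conditions $A \subset \{\xi < 0\}$ and the strict entrance~\eqref{eq:up} together dispatch these corner cases; once this is in place, the Fubini/conditional argument for Part~1 and the geometric-domination argument for Part~2 become routine.
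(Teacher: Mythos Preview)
Your proof is correct. The non-explosion argument in Part~2 is essentially the same as the paper's: the paper (Lemma~\ref{lem:H2}) bounds $\E[J_1]\le N/\eps$ by the same geometric comparison you use, just phrased as an expectation bound rather than stochastic domination.

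The uniqueness argument in Part~1, however, takes a genuinely different route. The paper does not prove uniqueness directly; instead it recasts the AMS algorithm as a Fleming-Viot particle system for the level-indexed process (Lemma~\ref{lem:x=y}), checks that Assumptions~\ref{ass:feller} and~\ref{ass:up} imply the abstract Assumption~\ref{ass:H1} (via Lemmas~\ref{lem:jump_dens}--\ref{lem:H1ii}), and then imports the well-posedness statement from the general Fleming-Viot theory of~\cite{cdgr2} (first bullet of Theorem~\ref{th:cdgr2}). Your approach is more elementary and self-contained: the atomless-score lemma you prove (that $\P_y(M=t)=0$ for $t\in[0,1)$) is the AMS-specific analogue of the paper's Lemma~\ref{lem:jump_dens}, derived by the same strong-Markov-plus-strict-entrance mechanism, but you then finish with a direct Fubini/conditioning induction rather than invoking an external reference. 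The paper's detour buys a unified framework in which the $L^2$ bound and the CLT come for free alongside well-posedness; your direct argument buys transparency and avoids the machinery of~\cite{cdgr2} for this particular statement. Note that your attainment of the supremum in the atomless lemma tacitly uses the standing hypothesis $S_A\wedge S_1<+\infty$ of~\eqref{eq:stop_finite}, which the paper also assumes throughout.
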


The second statement is a consistency result in the $L^2$ sense and coincides with the second point of Theorem \ref{th:cdgr2}. 

\begin{Pro} \label{gamma0} Under Assumptions~\ref{ass:feller}, \ref{ass:up}, and~\ref{ass:minor}, for any $\ph \in C_b(\{\xi=1\})$, one has
$$\E\b{ \p{\gamma^N_1(\ph) - \gamma_1(\ph)}^2     } \leq \frac{6 \norm{\ph}_\infty^2}{N}.$$
\end{Pro}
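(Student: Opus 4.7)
The plan is to identify the AMS algorithm, after a suitable time change, with a Fleming-Viot particle system, and then to invoke the general $L^2$ bound of Theorem~\ref{th:cdgr2} borrowed from our companion paper~\cite{cdgr2}. All of the probabilistic content is hidden in that theorem, so what remains is to translate the AMS data into the Fleming-Viot framework and to verify that its hypotheses hold for the level-indexed process associated with the pair $(Y,\xi)$.

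First I would introduce the \emph{level-indexed process} $(X_t)_{t \in [0,1]}$ on $\set{0\leq \xi \leq 1}$, defined by $X_t \eqdef Y_{S_t}$ and killed on the event $\set{S_A < S_t}$, which is the main object of Section~\ref{sec:FV}. Under Assumptions~\ref{ass:feller} and~\ref{ass:up}, this process is c\`adl\`ag and strong Markov in the level variable $t$, and satisfies $\xi(X_t)=t$ on the non-killed event by~\eqref{aeicvazich}. The AMS iteration, viewed in this new time parameter, consists precisely in killing the particle whose level is smallest and in branching from the position $X_{\tau_j}^{M_j}$ of a uniformly chosen surviving particle, using an independent copy of the underlying dynamics, which is exactly the Fleming-Viot branching mechanism of~\cite{cdgr2}. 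The estimator $\gamma_1^N(\ph)=p_1^N \eta_1^N(\ph)$ is then the standard unnormalized Fleming-Viot estimator for the non-absorbed semigroup of $X$ between levels $0$ and $1$, and the desired inequality
$$\E\b{\p{\gamma^N_1(\ph) - \gamma_1(\ph)}^2} \leq \frac{6 \norm{\ph}_\infty^2}{N}$$
is precisely the form of the $L^2$ estimate stated in the second point of Theorem~\ref{th:cdgr2}.

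The hypotheses to be checked before invoking that theorem are: (i) the level-indexed process is well-defined and strong Markov; (ii) the jump times of $t \mapsto X_t$ have atomless laws, so that the minimum in~\eqref{eq:min_score} is a.s.\ unique; and (iii) the survival probability $\P_y(S_1 < S_A)$ is uniformly bounded below. Point~(iii) is exactly Assumption~\ref{ass:minor}; point~(ii) follows from~\eqref{eq:down} together with the strong Markov property, as announced in Lemma~\ref{lem:jump_dens}; point~(i), together with the uniqueness part of Proposition~\ref{pro:well_posed}, comes from Assumptions~\ref{ass:feller} and~\ref{ass:up}. The main obstacle is therefore not the inequality itself but the construction and analysis of the level-indexed process: one must verify that the discontinuous time change $t \mapsto S_t$ preserves the Feller/strong Markov structure and that no jump of $t \mapsto Y_{S_t}$ carries positive mass. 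Once this is in place, the bound is a direct citation.
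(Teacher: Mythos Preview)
Your approach is exactly the one the paper takes: recast AMS as a Fleming-Viot system via the level-indexed process (Lemma~\ref{lem:x=y}), verify that Assumptions~\ref{ass:feller}, \ref{ass:up}, \ref{ass:minor} imply the abstract Assumptions~\ref{ass:H1} and~\ref{ass:H2} (Lemma~\ref{lem:check_ass}), and then read off the $L^2$ bound from the second point of Theorem~\ref{th:cdgr2}.

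Two small inaccuracies in your checklist of hypotheses are worth fixing. First, the atomlessness of the jump times of $t\mapsto X_t$ (your point~(ii)) is a consequence of~\eqref{eq:up}, not~\eqref{eq:down}: the argument in Lemma~\ref{lem:jump_dens} applies the strong Markov property at $S_{t^-}$ and uses that $\P_{Y_{S_{t^-}}}(S_t=0)=1$, which is precisely~\eqref{eq:up}. Condition~\eqref{eq:down} plays a different role, namely in establishing the continuity of $q(\ph)$. Second, and relatedly, you have omitted one of the hypotheses of Theorem~\ref{th:cdgr2}: Assumption~\ref{ass:H1} has a part~(ii) requiring that $x\mapsto q(\ph)(x)=Q^{1-\xi(x)}\ph(x)$ be continuous on $\set{0\leq\xi\leq 1}$ for $\ph\in C_b(\set{\xi=1})$. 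This is the content of Lemma~\ref{lem:H1ii}, and its proof is where both~\eqref{eq:up} and~\eqref{eq:down} (via $S_{\bar B}=S_{\mathring B}$ and Lemma~\ref{lem:time_fin}) are actually used. Once you add this item to your list, the reduction is complete.
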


Let us come now to the central limit result, which corresponds to the last point of Theorem \ref{th:cdgr2}. The asymptotic variance is described through the integral operator~\eqref{eq:cont_dir}, namely $q(\ph)(y):=\E_{y}\b{\ph(Y_{S_1})\one_{S_1 < S_A}}$ defined for any $y \in \set{0 \leq \xi \leq 1}$.

\begin{The}\label{gamma} 
Under Assumptions~\ref{ass:feller}, \ref{ass:up}, and~\ref{ass:minor}, for any $\ph \in C_b(\{\xi=1\})$, one has
$$\sqrt{N}\left(\gamma_1^N(\ph)-\gamma_1(\ph)\right)\xrightarrow[N\to\infty]{\cal D}{\cal N}(0,\sigma_1^2(\ph)),$$
where
$$\sigma_1^2(\ph) = p^2_1 \V_{\eta_1}(\ph) - p_1^2\log(p_1) \, \eta_1(\ph)^2 - 2\int_0^1 \V_{\eta_{t}}(q(\ph)) p_t dp_t .$$
\end{The}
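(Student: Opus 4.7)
\medskip

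\textbf{Proof plan for Theorem \ref{gamma}.} The strategy, already announced in the introduction, is to embed AMS into the general Fleming--Viot framework of~\cite{cdgr2} via a \emph{level change of time} and then invoke the Fleming--Viot CLT (Theorem~\ref{th:cdgr2}) whose conclusion directly matches the asymptotic variance written in the statement. The first step is to define, for each $t\in[0,1]$, the \emph{level-indexed process} or stochastic wave $X_t\eqdef Y_{S_t}$ with cemetery state corresponding to the event $\{S_t\geq S_A\}$. Under Assumption~\ref{ass:feller} and Assumption~\ref{ass:up}, the strong Markov property of $Y$ and the identity $\xi(Y_{S_t})=t$ from~\eqref{aeicvazich} make $(X_t)_{t\in[0,1]}$ a \cadlag, time-inhomogeneous strong Markov process with semigroup precisely given by the integral kernel $q_{s,t}(\ph)(y)=\E_y[\ph(Y_{S_t})\one_{S_t<S_A}]$ at the chosen initial level, and $q=q_{0,1}$ as in~\eqref{eq:cont_dir}. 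The AMS algorithm, read at the discrete sequence of levels $(\tau_j)$ associated with the minimal scores of the $N$ trajectories, is exactly a Fleming--Viot system driven by $(X_t)$ with killing boundary $\{S_A<S_t\}$: the particle with smallest score is the first to be killed in the level time $t$, and it is then replaced by branching off of a particle sampled uniformly among the $N-1$ survivors, which corresponds to restarting at $X_{\tau_j}^{M_j}$.

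The second step is to check the hypotheses of the Fleming--Viot CLT from~\cite{cdgr2}. The well-posedness and non-explosion conditions are already granted by Proposition~\ref{pro:well_posed}. The atomless-jump-time condition on the process $t\mapsto X_t$ is exactly what Lemma~\ref{lem:jump_dens} provides as a consequence of \eqref{eq:down} and the strong Markov property; this is the ingredient that rules out the pathological case of two particles jumping simultaneously and is also what guarantees the algorithm is well posed with a unique $N_j$. The Feller-type regularity needed to identify limit measures is furnished by Lemma~\ref{lem:H1ii}, which ensures $q(\ph)\in C_b(\{0\leq\xi\leq 1\})$ when $\ph\in C_b(\{\xi=1\})$; combined with Assumption~\ref{ass:minor}, this yields the uniform control on survival probabilities that~\cite{cdgr2} requires.

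Having verified the hypotheses, Theorem~\ref{th:cdgr2} applied to the level-indexed Fleming--Viot system yields $\sqrt{N}(\gamma_1^N(\ph)-\gamma_1(\ph))\xrightarrow{\cal D}\calN(0,\sigma_1^2(\ph))$ with
\[
\sigma_1^2(\ph)\;=\;p_1^2\V_{\eta_1}(\ph)\;-\;p_1^2\log(p_1)\,\eta_1(\ph)^2\;-\;2\int_0^1 \V_{\eta_t}(q_{t,1}(\ph))\,p_t\,dp_t,
\]
where the first term is the standard terminal sampling variance on the final level set, the second reflects the fluctuation of $J_1$ around $-N\log(p_1)$ through the multiplicative factor $p_1^N=(1-1/N)^{J_1}$, and the integral term encodes the accumulated branching/killing noise along the continuous range of levels. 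The last step is the identification $q_{t,1}=q$ on $\{\xi=t\}$ with the operator in~\eqref{eq:cont_dir} via the strong Markov property at $S_t$, which shows that the variance of the Fleming--Viot CLT is exactly the $\sigma_1^2(\ph)$ claimed in the theorem.

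The main obstacle is the second step: matching the specific AMS dynamics to the Fleming--Viot assumptions of~\cite{cdgr2}. Concretely, one must verify that the level-indexed process is genuinely strong Markov \cadlag{} with atomless jump times; the non-degeneracy \eqref{eq:up} prevents the process from lingering on a level, and \eqref{eq:down} forces excursions away from $A$ to be non-trivial, both being what Lemma~\ref{lem:jump_dens} crystallizes. Once this is in place, the CLT is essentially an application of~\cite{cdgr2}, and the explicit variance formula follows by substituting the semigroup $q_{t,1}$ into the general expression provided by that reference.
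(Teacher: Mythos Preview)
Your plan is correct and follows exactly the paper's route: recast AMS as a Fleming--Viot particle system via the level-indexed process, verify Assumptions~\ref{ass:H1} and~\ref{ass:H2} from Assumptions~\ref{ass:feller}--\ref{ass:minor} (this is Lemma~\ref{lem:check_ass}, proved in Section~\ref{sec:check_ass}), and then read off Theorem~\ref{th:cdgr2}. Two small corrections: the atomless-jump-time property in Lemma~\ref{lem:jump_dens} is a consequence of~\eqref{eq:up} (strict entrance into upper levels), not of~\eqref{eq:down}; and the paper's level-indexed process is constructed as a \emph{time-homogeneous} Markov process (Definition~\ref{def:lev_ind}) by shifting the level index by $\xi(X_0)$, so that $Q^{1-\xi(x)}(\ph)(x)=q(\ph)(x)$ directly (Lemma~\ref{lem:dirichlet_op}) rather than via a two-parameter family $q_{t,1}$.
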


Then it is easy to see that Slutsky's lemma and the decomposition
\begin{equation}\label{eq:decomp}
\eta_T^N\p{\ph}-\eta_T(\ph)=\frac{1}{\gamma_T^N(\mathbf{1})} \left(\gamma_T^N(\ph-\eta_T\left(\ph\right))-\gamma_T(\ph-\eta_T\left(\ph\right))\right)
\end{equation}
lead to the upcoming result.

\begin{Cor}\label{eta}
Under Assumptions~\ref{ass:feller}, \ref{ass:up}, and~\ref{ass:minor}, for any $\ph \in C_b(\{\xi=1\})$, one has
$$\sqrt{N}\left(\eta_1^N(\ph)-\eta_1\p{\ph}\right)\xrightarrow[N\to\infty]{\cal D}{\cal N}(0,\sigma_1^2(\ph-\eta_1(\ph))/p_1^2).$$
Besides,
$$\sqrt{N}\left(p_1^N-p_1\right)\xrightarrow[N\to\infty]{\cal D}{\cal N}(0,\sigma^2),$$
where
$$\sigma^2=\sigma_1^2(\mathbf{1}) = -p_1^2\log(p_1) - 2\int_0^1 \V_{\eta_{t}}(q(\mathbf{1})) p_t dp_t.$$
\end{Cor}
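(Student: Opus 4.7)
The plan is to reduce both convergence statements to applications of Theorem \ref{gamma} combined with Slutsky's lemma, using the algebraic decomposition \eqref{eq:decomp} already stated in the excerpt and the $L^2$ control provided by Proposition \ref{gamma0}. Essentially no new estimates are needed; everything is packaged in results available upstream.

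For the first assertion, I would start from the identity
\begin{equation*}
\sqrt{N}\,\bigl(\eta_1^N(\ph)-\eta_1(\ph)\bigr)
=\frac{1}{\gamma_1^N(\mathbf{1})}\,\sqrt{N}\bigl(\gamma_1^N(\psi)-\gamma_1(\psi)\bigr),
\qquad \psi\eqdef \ph-\eta_1(\ph),
\end{equation*}
which is \eqref{eq:decomp} together with the observation that $\gamma_1(\psi)=\gamma_1(\ph)-\eta_1(\ph)\,p_1=0$. Since $\psi$ is just $\ph$ shifted by a constant it still lies in $C_b(\{\xi=1\})$, so Theorem \ref{gamma} applies and yields
\begin{equation*}
\sqrt{N}\bigl(\gamma_1^N(\psi)-\gamma_1(\psi)\bigr)\xrightarrow[N\to\infty]{\calD}\calN\bigl(0,\sigma_1^2(\ph-\eta_1(\ph))\bigr).
\end{equation*}
On the other hand, Proposition \ref{gamma0} applied to the test function $\mathbf{1}$ gives $\gamma_1^N(\mathbf{1})\to\gamma_1(\mathbf{1})=p_1$ in $L^2$, and in particular in probability. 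Slutsky's lemma then yields the announced CLT with asymptotic variance $\sigma_1^2(\ph-\eta_1(\ph))/p_1^2$.

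For the second assertion, I would specialize Theorem \ref{gamma} to the constant test function $\ph\equiv \mathbf{1}$. Because $\eta_1^N(\mathbf{1})=1$, one has $\gamma_1^N(\mathbf{1})=p_1^N$ and $\gamma_1(\mathbf{1})=p_1$, so Theorem \ref{gamma} directly gives
\begin{equation*}
\sqrt{N}\bigl(p_1^N-p_1\bigr)\xrightarrow[N\to\infty]{\calD}\calN\bigl(0,\sigma_1^2(\mathbf{1})\bigr).
\end{equation*}
Finally I would simplify the variance using $\V_{\eta_1}(\mathbf{1})=0$ and $\eta_1(\mathbf{1})=1$ in the formula for $\sigma_1^2$, which immediately produces
\begin{equation*}
\sigma^2=-p_1^2\log(p_1)-2\int_0^1\V_{\eta_t}(q(\mathbf{1}))\,p_t\,\mathrm{d}p_t.
\end{equation*}

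There is essentially no obstacle here: the content lies entirely in Theorem \ref{gamma} and Proposition \ref{gamma0}. The only minor point to check is that the shifted function $\ph-\eta_1(\ph)$ still belongs to $C_b(\{\xi=1\})$, which is obvious, and that $\gamma_1^N(\mathbf{1})$ does not vanish on the event of interest for large $N$, which is a consequence of the $L^2$ consistency combined with $p_1>0$.
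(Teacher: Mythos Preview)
Your proof is correct and follows exactly the approach indicated in the paper, which explicitly states that the corollary follows from Slutsky's lemma together with the decomposition~\eqref{eq:decomp}. The details you supply---applying Theorem~\ref{gamma} to the centered test function $\ph-\eta_1(\ph)$, using Proposition~\ref{gamma0} for the convergence $\gamma_1^N(\mathbf{1})\to p_1$, and specializing to $\ph=\mathbf{1}$ for the second assertion---are precisely what the paper leaves implicit.
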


First we can remark that all these asymptotic variances can be viewed as the limit of the asymptotic variances for the algorithm with a finite number of levels $0<t_1<\ldots<t_K=1$ as in \cite{alea06,delmoral04a}, when the number $K$ of levels tends to infinity. Details and explanations are provided in \cite{cdgr1}, Section 2.4.\medskip

In the rest of this section, we propose to discuss some consequences of the previous results. We begin with the number of steps of the algorithm. It essentially says that this number grows logarithmically with the rarity of the event and linearly with the number of particles. Remember that one step requires the simulation of only one new trajectory, the computation of its score and the comparison with the $(N-1)$ other already evaluated scores. Hence we can conclude that the total complexity of the algorithm scales like ${\cal O}_P(-N\log N\log p_1)$. A similar remark was already present in \cite{ghm}, but was restricted to the so-called idealized setting.

\begin{Cor}\label{pazeichp}
Under Assumptions~\ref{ass:feller}, \ref{ass:up}, and~\ref{ass:minor}, the number of steps of the AMS algorithm satisfies 
$$J_1=-N\log(p_1)+{\cal O}_P(\sqrt{N}).$$
\end{Cor}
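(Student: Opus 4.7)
The plan is to invert the defining relation $p_1^N = (1-1/N)^{J_1}$ to express $J_1$ explicitly in terms of $p_1^N$, and then feed in the asymptotic normality of $p_1^N$ provided by Corollary~\ref{eta}. Since $p_1^N > 0$ whenever the algorithm terminates, taking logarithms gives
$$J_1 = \frac{\log p_1^N}{\log(1-1/N)}.$$

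Next, I would Taylor expand the denominator: $\log(1-1/N) = -\tfrac{1}{N} - \tfrac{1}{2N^2} + O(1/N^3)$, whence
$$\frac{1}{\log(1-1/N)} = -N - \frac12 + O(1/N).$$
This gives the deterministic identity
$$J_1 = -N\log p_1^N - \tfrac{1}{2}\log p_1^N + O\pare{\tfrac{\log p_1^N}{N}}.$$

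The second ingredient is the asymptotic behaviour of $\log p_1^N$. From Corollary~\ref{eta}, $\sqrt{N}(p_1^N - p_1)$ converges in distribution to a centered Gaussian, hence $p_1^N - p_1 = \mathcal{O}_P(1/\sqrt{N})$. Since $p_1 > 0$ by assumption, a first-order expansion (or equivalently the delta method applied to $x \mapsto \log x$) yields $\log p_1^N = \log p_1 + \mathcal{O}_P(1/\sqrt{N})$. Substituting this into the identity above,
$$J_1 = -N\log p_1 + N \cdot \mathcal{O}_P(1/\sqrt{N}) + \mathcal{O}_P(1) = -N\log p_1 + \mathcal{O}_P(\sqrt{N}),$$
which is the claim.

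There is essentially no hard step here: once one has the CLT for $p_1^N$, the rest is a deterministic expansion of $(\log(1-1/N))^{-1}$ combined with Slutsky-type bookkeeping. The only mild subtlety is ensuring that the $O(1/N)$ remainder in the expansion of $1/\log(1-1/N)$, once multiplied by $\log p_1^N = \mathcal{O}_P(1)$, contributes a negligible $\mathcal{O}_P(1/N)$ term, so it is absorbed into $\mathcal{O}_P(\sqrt{N})$; and that the algorithm terminates almost surely, which is guaranteed by Proposition~\ref{pro:well_posed} under Assumptions~\ref{ass:feller}, \ref{ass:up}, and~\ref{ass:minor}, so that $J_1$ and $\log p_1^N$ are indeed almost surely well-defined finite random variables.
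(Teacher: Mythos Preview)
Your proof is correct and follows essentially the same route as the paper's: invert $p_1^N=(1-1/N)^{J_1}$, expand $\log(1-1/N)$, and feed in $p_1^N-p_1=\mathcal{O}_P(1/\sqrt{N})$. Two minor remarks: (i) the paper invokes the $L^2$ estimate of Proposition~\ref{gamma0} rather than the CLT of Corollary~\ref{eta} to obtain $p_1^N-p_1=\mathcal{O}_P(1/\sqrt{N})$, which is slightly more economical but equivalent for the purpose at hand; (ii) your expansion of $1/\log(1-1/N)$ has a sign slip --- it should read $-N+\tfrac12+O(1/N)$, not $-N-\tfrac12+O(1/N)$ --- but since this term is $\mathcal{O}_P(1)$ either way, the conclusion is unaffected.
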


\begin{proof}
Indeed, Proposition \ref{gamma0} with $\ph=\mathbf{1}$ gives $p_1^N=\gamma_1^N(\mathbf{1})$ and $p_1=\gamma_1(\mathbf{1})$, so that
$$p_1^N=p_1+{\cal O}_P(1/\sqrt{N}),$$ 
and
$$\log(p_1^N)=\log(p_1)+{\cal O}_P(1/\sqrt{N}).$$
Besides, 
$$\log(p_1^N)=J_1\log(1-1/N)=J_1(-1/N+o(1/N)).$$ 
Therefore, by using both expressions we get
$$
J_1=-(\log(p_1)+{\cal O}_P(1/\sqrt{N}))(1/N+o(1/N))^{-1}=-N\log(p_1)+{\cal O}_P(\sqrt{N}).
$$
\end{proof}
 
Now we can focus our attention on the asymptotic variance of the probability estimate, that is 
\begin{equation}\label{kshcb}
\sigma^2=\sigma_1(\mathbf{1})^2= -p_1^2\log(p_1) - 2\int_0^1 \V_{\eta_{t}}(q(\mathbf{1})) p_t dp_t.
\end{equation}
By choosing the level function $\xi^\star(y)=q(\mathbf{1})(y)=\P_y(S_1<S_A)$, it turns out that $\eta_t$ is supported on the level set $t$ of $\xi^\star$. Besides, for every $y$ on this level set, we have $q(\mathbf{1})(y)=\P_y(S_1<S_A)=1-t$, so that 
$$\V_{\eta_{t}}(q(\mathbf{1}))=\V_{\eta_{t}}(\xi^\star)=0.$$
Hence the integral term vanishes and $\sigma^2$ reduces to
$$\sigma^2=-p_1^2\log(p_1).$$ 
This function $y\mapsto \xi^\star(y)$ is called the committor function in molecular dynamics, where its prominent role is well known (see e.g.~\cite{hummer-04} and~\cite{metzner-schuette-vanden-eijnden-06}). In fact, the knowledge of the committor function typically requires solving a PDE, which in turn is much more involved than the problem of estimating rare events probabilities. However, it is important to notice that it gives the best possible asymptotic variance.\medskip

This phenomenon also arises when considering the idealized case, where we assume that at each branching, we can generate a new trajectory reaching at least the current level, and independent of the other particles' trajectories (see \cite{bgt14,ghm}). Note also that in the one dimensional case, if $\xi$ is strictly increasing, then the levels sets are reduced to one point, $\eta_t$ is a Dirac measure, and the variance is minimal.\medskip 

On the opposite, we can also exhibit the worst value for $\sigma^2$. For that, consider the variance term in the integrand of (\ref{kshcb}), that is $\V_{\eta_t}(q(\mathbf{1}))$. It corresponds to the variance of the random variable $Z=q(\mathbf{1})(X)$, with $X$ drawn according to $\eta_t$. Hence $Z$ is between $0$ and $1$, and its mean value is $p_1/p_t$. Under those constraints, the largest variance is that of a Bernoulli variable with parameter $p_1/p_t$, and is given by $p_1/p_t(1-p_1/p_t)$. In this situation, we have
$$
- 2\int_0^1 \V_{\eta_{t}}(q(\mathbf{1})) p_t dp_t = -2\int_0^1\frac{p_1}{p_t}(1-\frac{p_1}{p_t})\,p_t\,dp_t
=2p_1^2\log(p_1)-2p_1^2+2p_1,
$$
which yields
$$
\sigma^2\leq 2p_1(1-p_1)+p_1^2\log(p_1)\leq 2p_1(1-p_1).
$$
Notice that this upper bound is exactly twice the variance of a naive Monte Carlo method, which simply consists in simulating $N$ i.i.d.~replicates of the original process $Y$ and counting the proportion of trajectories that reach the level set $\{\xi=1\}$ before $A$. Consequently, we see that if we make a very bad choice for $\xi$, things can go pretty bad, even worse than naive Monte Carlo. Nonetheless, this has to be compared to importance sampling, where one is not even guaranteed to have a finite variance (see for example \cite{GW}). We summarize the previous results in the following lemma.

\begin{Cor}\label{zouchb}
Under Assumptions~\ref{ass:feller}, \ref{ass:up}, and~\ref{ass:minor}, the asymptotic variance for the probability estimator satisfies 
$$-p_1^2\log(p_1)\leq\sigma^2\leq 2p_1(1-p_1).$$
\end{Cor}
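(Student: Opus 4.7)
The plan is to read off both bounds from the explicit expression
\[
\sigma^2 = -p_1^2 \log p_1 - 2 \int_0^1 \V_{\eta_t}(q(\mathbf{1})) \, p_t \, dp_t,
\]
by controlling the sign and magnitude of the integral term. The key background fact I would first record is that $t \mapsto p_t = \P(S_t < S_A)$ is nonincreasing on $[0,1]$, since the event $\set{S_t < S_A}$ shrinks with $t$; consequently $dp_t$ is a nonpositive Stieltjes measure. The lower bound is then immediate: the integrand $\V_{\eta_t}(q(\mathbf{1})) \, p_t$ is nonnegative, so its integral against $dp_t$ is nonpositive, which, multiplied by $-2$, contributes nonnegatively, yielding $\sigma^2 \geq -p_1^2 \log p_1$.

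For the upper bound, the heart of the argument is a sharp worst-case estimate on $\V_{\eta_t}(q(\mathbf{1}))$. I would combine two ingredients. First, $q(\mathbf{1})(y) = \P_y(S_1 < S_A)$ takes values in $[0,1]$. Second, its mean under $\eta_t$ is exactly $p_1/p_t$: the strong Markov property applied at the stopping time $S_t$ on the event $\set{S_t < S_A}$ gives $\E\b{q(\mathbf{1})(Y_{S_t}) \one_{S_t < S_A}} = \P(S_1 < S_A) = p_1$, and dividing by $\gamma_t(\mathbf{1}) = p_t$ yields $\eta_t(q(\mathbf{1})) = p_1/p_t$. The classical inequality $\V(Z) \leq \E[Z](1 - \E[Z])$ for any $[0,1]$-valued $Z$ then gives
\[
\V_{\eta_t}(q(\mathbf{1})) \leq \frac{p_1}{p_t}\left(1 - \frac{p_1}{p_t}\right).
\]

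To finish, I would substitute this bound into the integral and use the change of variable $u = p_t$ (so $u$ runs from $1$ down to $p_1$), after which the integral term collapses to
\[
-2 \int_1^{p_1} \frac{p_1}{u}\left(1 - \frac{p_1}{u}\right) u \, du = -2 p_1 \b{u - p_1 \log u}_1^{p_1} = 2 p_1 (1-p_1) + 2 p_1^2 \log p_1.
\]
Adding the $-p_1^2 \log p_1$ term produces $\sigma^2 \leq 2 p_1(1-p_1) + p_1^2 \log p_1$, and since $\log p_1 \leq 0$ one obtains $\sigma^2 \leq 2 p_1(1-p_1)$. The only mildly delicate point I anticipate is the rigorous handling of the Stieltjes integral and the change of variable; these are secured by the monotonicity of $t \mapsto p_t$ and the continuity of $q(\mathbf{1})$ from Lemma~\ref{lem:H1ii}. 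Beyond that, the argument reduces to routine algebra and elementary inequalities.
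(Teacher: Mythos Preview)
Your proof is correct and follows essentially the same route as the paper: the lower bound comes from nonnegativity of the variance together with $dp_t \leq 0$, and the upper bound uses the Bernoulli variance bound $\V_{\eta_t}(q(\mathbf{1})) \leq \frac{p_1}{p_t}(1-\frac{p_1}{p_t})$ (with mean $p_1/p_t$ justified by the strong Markov property) followed by the same explicit integration. Your presentation is slightly more detailed on the change of variable and the justification of $\eta_t(q(\mathbf{1})) = p_1/p_t$, but the argument is the paper's.
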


Let us conclude this section with some comments on the asymptotic variance for $\eta_1^N(\ph)$, namely $\sigma_1^2(\ph-\eta_1(\ph))/p_1^2$. First notice that for all $t\in[0,1]$ 
$$\gamma_t(q(\ph))=\E[q(\ph(Y_{S_t}))\un_{S_t<S_A}]=\E[\E[\ph(Y_{S_1})\un_{S_1<S_A}|Y_{S_t}]\un_{S_t<S_A}],$$
which yields
$$\gamma_t(q(\ph))=\E[\ph(Y_{S_1})\un_{S_1<S_A}]=\gamma_1(\ph).$$
As a consequence, by linearity of both $q$ and $\eta_t$, one has
$$\eta_t(q(\ph-\eta_1(\ph)))=\eta_t(q(\ph))-\eta_t(q(\un))\eta_1(\ph)=(\gamma_t(q(\ph))-\gamma_1(\ph))/p_t=0.$$ 
Hence, denoting $r_t=p_t/p_1$, we are led to the alternative expression
$$
\sigma_1^2(\ph-\eta_1(\ph))/p_1^2=\V_{\eta_1}(\ph)-2\int_0^1 \eta_t((q(\ph-\eta_1(\ph)))^2)\, r_t\,dr_t.
$$
It is readily seen that $|q(\ph-\eta_1(\ph))|\leq \|\ph-\eta_1(\ph)\|_\infty q(\mathbf{1})$, so that 
$$
-2\int_0^1 \eta_t((q(\ph-\eta_1(\ph)))^2)\, r_t\,dr_t\leq -2 \|\ph-\eta_1(\ph)\|_\infty^2 \int_0^1 \eta_t((q(\mathbf{1}))^2)\, r_t\,dr_t.
$$
Taking into account that $\eta_t(q(\mathbf{1}))=1/r_t$, we get
$$
-2\int_0^1 \eta_t((q(\mathbf{1}))^2)\, r_t\,dr_t = -2\int_0^1 \V_{\eta_t}(q(\mathbf{1}))\, r_t\,dr_t -2\log (p_1).
$$
So we have the following bound
\begin{equation}\label{comvar}
\V_{\eta_1}(\ph)\leq \sigma_1^2(\ph-\eta_1(\ph))/p_1^2 \leq \V_{\eta_1}(\ph) +  \|\ph-\eta_1(\ph)\|_\infty^2 \frac{\sigma^2}{p_1^2}-\log p_1,
\end{equation}
with $\sigma^2$ as in Corollary \ref{eta}. The lower bound is the variance we would get with an i.i.d.~sample from $\eta_1$. As noticed in Corollary \ref{zouchb}, at best the second term in the r.h.s.~of (\ref{comvar}) reduces to $-\|\ph-\eta_1(\ph)\|_\infty^2\log( p_1)$.

\begin{Rem}
 In the AMS Algorithm~\ref{alg:ams}, we assume that the initial condition~(\ref{eq:ams_init}) consists in  $N$ i.i.d.~random variables $Y_0^{n,0}$, $1\leq n\leq N$, with common law $\eta_0$. In fact, this assumption can be relaxed to any exchangeable initial condition satisfying a bound of the form 
 $$
 \E \b{ \left(\eta_0^N(q(\ph))-\eta_0(q(\ph))\right)^2 } \leq \frac{c\norm{\ph}_\infty^2}{N}
 $$
for some constant $c > 0$, as well as the following CLT:
 $$\sqrt{N}\left(\eta_0^N(q(\ph))-\eta_0(q(\ph))\right)\xrightarrow[N\to\infty]{\cal D}{\cal N}(0,\V_{\eta_0}(q(\ph))).$$
In that case, all the results of this section still hold true (see Remark~$2.8$ in~\cite{cdgr2}).
\end{Rem}

\subsection{Extension to path observables and entrance times}

This section deals with an extension of the CLT to richer observables. For this purpose, we can consider the following Polish space.
\begin{Def}\label{def:path_topo} Let $C_{\rm stop}(\R_+,E)$ denote the space of continuous paths, with possibly a given terminal time $s$. We will use the notation
$$
y_{[0,s]} \eqdef
\left\{\begin{array}{ll}
  (y_{s'})_{0 \leq s' \leq s}  &\text{\, if \,} s < +\infty,\\
  (y_{s'})_{s' \geq 0}   &\text{\, if \,} s = +\infty.\\
\end{array}\right.
$$ 
We say that a sequence $y^n_{[0,s^n]}$ in $C_{\rm stop}(\R_+,E)$ converges towards $y_{[0,s]}$ if $\lim_n s^n = s \in {\overbar{\R}}_+$ and $\lim_n (y^n_{s' \wedge s^n})_{s' \geq 0} = (y_{s' \wedge s})_{s' \geq 0}$ in $C(\R_+,E)$ endowed with uniform convergence on compacts. This defines a Polish topology on $C_{\rm stop}(\R_+,E)$ that we will always use in the sequel, unless otherwise specified.
\end{Def}

The main message is that, {\it mutatis mutandis}, the central limit result of Theorem \ref{gamma} is still valid in this new context. More precisely, we have the extended following CLT (see Appendix~\ref{sec:path_proof} for the proof).

\begin{The}\label{th:clt_path}Let $\psi: C_{\rm stop} \p{ \R_+,E }\to \R$
denote a given continuous and bounded functional. Set $\calX^n_t \eqdef Y^{n,J_t}_{[0,S^{n}_t]}$
and 
$$\eta^N_t \eqdef \frac{1}{N} \sum_{n=1}^N \delta_{\calX^n_t}.$$
Besides, consider
$$\gamma_t(\psi) \eqdef \E \b{\psi(Y_{[0,S_t]}) \un_{S_t < S_A}},$$ 
and
$$
q(\psi)(y_{[0,s]}) \eqdef \E \b{\psi(Y_{[0,S_1]}) \un_{S_1 < S_A} | Y_{[0,s]} = y_{[0,s]} }.
$$
Denote as before $\eta_{t} \eqdef \gamma_t / \gamma_t(\un)$ and $\gamma^N_t \eqdef p^N_t \eta^N_t$. If Assumptions~\ref{ass:feller}, \ref{ass:up}, and~\ref{ass:minor} are satisfied, then Proposition~\ref{gamma0}, Theorem~\ref{gamma} and Corollary~\ref{eta} hold true when replacing $\ph$ with $\psi$.
\end{The}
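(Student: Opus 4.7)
The natural idea is to recast the statement as an instance of Theorem~\ref{gamma} applied to an augmented, path-valued Markov process. Let $\widehat E \eqdef C_{\rm stop}(\R_+,E)$ with the topology of Definition~\ref{def:path_topo}, and introduce
\[
\widehat Y_s \eqdef Y_{[0,s]}, \qquad \widehat \xi(y_{[0,s]}) \eqdef \xi(y_s), \qquad \widehat A \eqdef \set{y_{[0,s]} : y_s \in A}.
\]
Because the filtration generated by $\widehat Y_{[0,s]}$ coincides with $\calF^Y_s$, the process $\widehat Y$ is Markov with continuous trajectories in $\widehat E$. Since $\widehat \xi$ depends only on the endpoint, all hitting times for $(\widehat Y,\widehat \xi,\widehat A)$ coincide with the corresponding ones for $(Y,\xi,A)$, and the state of $\widehat Y$ at time $S_t$ is exactly the path $\calX_t = Y_{[0,S_t]}$. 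Consequently, running the AMS algorithm of Section~\ref{sec.algo} on $(\widehat Y,\widehat \xi,\widehat A)$ produces precisely the empirical measure $\eta^N_t = \frac{1}{N}\sum_{n=1}^N \delta_{\calX^n_t}$ of Theorem~\ref{th:clt_path}, and similarly for $\gamma^N_t$.

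It thus suffices to verify the analogues of Assumptions~\ref{ass:feller}, \ref{ass:up} and~\ref{ass:minor} for the lifted system and then to apply Proposition~\ref{gamma0}, Theorem~\ref{gamma} and Corollary~\ref{eta}. The strict-entrance condition~\eqref{eq:up} and the lower bound $\inf_y \P_y(S_1<S_A)>0$ involve only hitting times of the level function and of $A$, which act through the endpoint; both are therefore immediately inherited from the original hypotheses. The Feller property of $\widehat Y$ on $\widehat E$ follows from the Feller property of $Y$ combined with the continuous dependence of trajectories on their initial condition, expressed with respect to uniform convergence on compacts.

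The key technical step is the path-space analogue of Lemma~\ref{lem:H1ii}: for $\psi \in C_b(\widehat E)$, the operator
\[
\widehat q(\psi)(y_{[0,s]}) \eqdef \E\b{\psi(Y_{[0,S_1]})\un_{S_1 < S_A} | Y_{[0,s]} = y_{[0,s]}}
\]
should be shown to be continuous and bounded on $\set{0 \leq \widehat \xi \leq 1}$. Given a convergent sequence of prefix paths $y^{k}_{[0,s^k]} \to y_{[0,s]}$ with $\xi(y_s) \in [0,1]$, I would (i) use the Feller property together with a Skorokhod coupling to realize convergence in law of the post-prefix trajectories started from $y^k_{s^k}$ towards the trajectory started from $y_s$, (ii) invoke the analogue of Lemma~\ref{lem:time_fin} to obtain convergence of the entrance times $S_1 \wedge S_A$ along this coupling, and (iii) conclude by continuity of $\psi$ in the topology of Definition~\ref{def:path_topo} and bounded convergence. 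Concatenation of the fixed prefix $y^k_{[0,s^k]}$ with the converging random suffix yields the desired convergence of $\widehat q(\psi)(y^k_{[0,s^k]})$ to $\widehat q(\psi)(y_{[0,s]})$.

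The chief obstacle is that $\widehat E$ is Polish but not locally compact, so Assumption~\ref{ass:feller} does not carry over verbatim. A direct inspection of the arguments of~\cite{cdgr2} on which Theorem~\ref{gamma} relies reveals, however, that local compactness is used only to establish the continuity-preserving property of the $q$-operator (Lemma~\ref{lem:H1ii}); since we have just argued continuity of $\widehat q$ by a direct coupling argument tailored to the path topology, this step may be bypassed. With that substitution, the general Fleming-Viot CLT of~\cite{cdgr2} applies to $\widehat Y$ and yields Theorem~\ref{th:clt_path}; the variance formula of Theorem~\ref{gamma} carries over unchanged, with $\psi$ and $\widehat q$ in place of $\ph$ and $q$.
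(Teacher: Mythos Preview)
Your proposal is correct and follows essentially the same route as the paper. Both arguments lift the problem to the path space $C_{\rm stop}(\R_+,E)$, recognize that local compactness (and hence Assumption~\ref{ass:feller} verbatim) fails there, and therefore bypass Theorem~\ref{gamma} in favor of a direct verification of the Fleming-Viot assumptions~\ref{ass:H1} and~\ref{ass:H2}; the heart of the matter in both cases is the continuity of the pathwise $q$-operator, obtained via a Skorokhod coupling of the post-prefix trajectories together with continuity of the concatenation map (your step~(iii), the paper's Lemmas~\ref{lem:map_ext} and~\ref{lem:time_fin_path}).

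The only organizational difference is that the paper does not attempt to lift $Y$ itself to a path-valued Feller process $\widehat Y$; instead it directly defines a pathwise \emph{level-indexed} process $\calX_h = Y_{[0,S_{(\xi(y_s)+h)\wedge 1}]}$ on a carefully chosen Borel subset $F$ of $C_{\rm stop}$, and checks~\ref{ass:H1} and~\ref{ass:H2} for that object. This avoids your intermediate claim that ``the Feller property of $\widehat Y$ on $\widehat E$ follows from the Feller property of $Y$'', which is awkward to formulate on a non-locally-compact space and which you yourself later discard. Apart from this cosmetic detour, the technical content---atomless jump times via Lemma~\ref{lem:jump_dens}, continuity of $q$ via coupling plus concatenation, and~\ref{ass:H2} via Assumption~\ref{ass:minor}---is identical.
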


\begin{Rem}
In the special case of entrance times, i.e.~observables of the form $\psi(Y_{S_1},S_1)$, the CLT is in fact a direct consequence of Theorem~\ref{gamma}. Indeed, consider the time homogeneous Markov Feller process 
$$
\widetilde{Y}_h \eqdef (Y_h, s_0+h), \qquad h \geq 0,
$$
defined for each initial condition $\widetilde{Y}_0=(y_0,s_0)\in\{0\leq\xi\leq 1\}\times[0,1]$. By construction, it can be easily checked that if $Y$ satisfies Assumptions~\ref{ass:feller}, \ref{ass:up}, and~\ref{ass:minor}, then it is also true for $\widetilde{Y}$, so that this case is included in~Theorem~\ref{gamma}.
\end{Rem}

\section{Level-indexed processes and Fleming-Viot particle systems}\label{sec:FV}
In this section, we introduce a c\`adl\`ag process $X$ based on the couple $(Y,\xi)$ and called the level-indexed process. In a different framework, it was introduced by Dynkin and 
Vanderbei in~\cite{MR0682731} and called a stochastic wave by these authors. They mainly studied it in the case where $Y$ is a diffusion, but apparently without any specific application in mind. In our framework, thanks to a slight modification of this object, we can interpret the AMS Algorithm~\ref{alg:ams} as a Fleming-Viot particle system. The results of~\cite{cdgr2} on Fleming-Viot particle systems can then be applied in order to prove Proposition~\ref{gamma0} and Theorem~\ref{gamma}. 

\subsection{Level-indexed processes}
Let us denote $\partial$ a cemetery point. Recall that $S_A \eqdef \inf\{ s \geq 0,\ Y_s \in A\}$ and $S_t \eqdef \inf\{ s \geq 0,\ \xi(Y_s) > t\}$. 

\begin{Def}[Level-indexed process]\label{def:lev_ind} 
Let the first condition of Assumption~\ref{ass:up} be satisfied, namely that for each $t \in [0,1]$ and for each $y \in \set{ \xi = t}$, we have $\P_y(S_t =0)=1$. The level-indexed process, or stochastic wave, $(X_h)_{h \geq 0}$ with state space $F \cup \set{\partial}$ where 
$$ F \eqdef \set{0 \leq \xi \leq 1}, $$
and associated with the pair $(Y,\xi)$ and initial condition $Y_0=x$ is defined by its initial condition $X_0=x$, and for any $h \geq 0$,
\[
X_h \eqdef 
  \begin{cases}
    Y_{S_{(\xi(x) + h)\wedge 1}}  &\mbox{if}\  \ S_{(\xi(x) + h)\wedge 1} < S_A,  \\
    \partial &\mbox{if}\  \ S_{(\xi(x) + h)\wedge 1} \geq S_A.
  \end{cases}
\]
\end{Def}

\begin{Rem} \label{rem:lev_ind}
\begin{itemize}
  \item The first condition of Assumption~\ref{ass:up} is necessary to ensure the consistency of the definition of the level-indexed process. For instance it is necessary to ensure that $X_0=Y_{S_{\xi(x)}} = x$ given the initial condition $X_0=Y_0=x$.
  \item If $Y$ has continuous trajectories, $\xi$ is continuous, and $S_1 \wedge S_A < +\infty$ as is the case here, then $X_h$ is c\`adl\`ag and $\xi(X_h) = \p{\xi(x) + h} \wedge 1$ for all $h \geq 0$. See also Section~\ref{sec:non_stop_path} for the case where $S_1 \wedge S_A = +\infty$ with non zero probability.
  \item If the initial level is $\xi(x) = 0$ and if $t \in [0,1]$ is such that $X_t\neq\partial$, then $\xi(X_t)=t$ (see Figure \ref{algo3}). In particular, if $X_1\neq\partial$, then $X_t=Y_{S_t}$ for all $t \in [0,1]$.
  
\begin{figure}
\begin{center}
\input{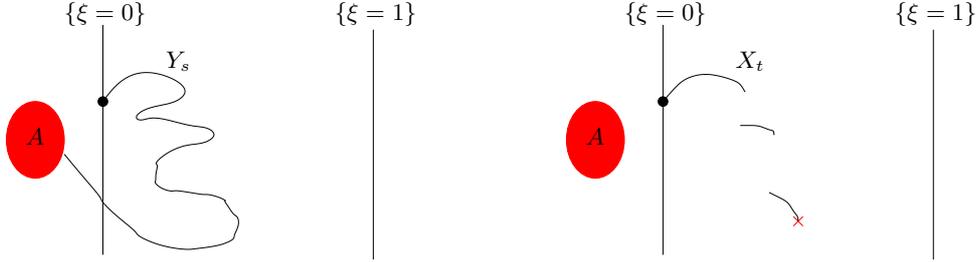}
\caption{The level-indexed process $X_t$ associated to the pair $(Y,\xi)$.}
\label{algo3}
\end{center}
\end{figure}

  \item If $Y$ is Feller, as is the case here, then $Y$ is strong Markov with respect to its right-continuous natural filtration $\calF^Y$. By construction of the level-indexed process, it implies that $X$ is - at least - a time homogeneous Markov process with respect to the filtration $\p{\calF^Y_{S_{(\xi(X_0)+h) \wedge 1}}}_{h \geq 0}$, and thus {\it a fortiori} with respect to its smaller natural filtration. 
  \item If $F=\set{0 \leq \xi \le1}$ is compact, then the continuity Lemma~\ref{lem:time_fin} implies that the level-indexed process $X$ is itself Feller.
\end{itemize}
\end{Rem}

In the case where $Y$ is not stopped at $S_{A}$, the level-indexed process has been introduced in~\cite{MR0682731} and called a stochastic wave.
If, for example, $Y_s=(Y_s^1,Y_s^2)$  is a two-dimensional Brownian motion with $Y_0 = 0$
and $\xi(y^1,y^2)=y^1$, then $X_t^1=t$ and $X_t^2=Y^2_{S_t}$ where $S_t = \inf \set{s \geq 0, Y^1_s = t}$ is a symmetric Cauchy process
with a dense set of discontinuity points. This is illustrated on Figure~\ref{lip}.\medskip

\begin{figure}
\begin{center}
\includegraphics[height=7.3cm,width=14cm]{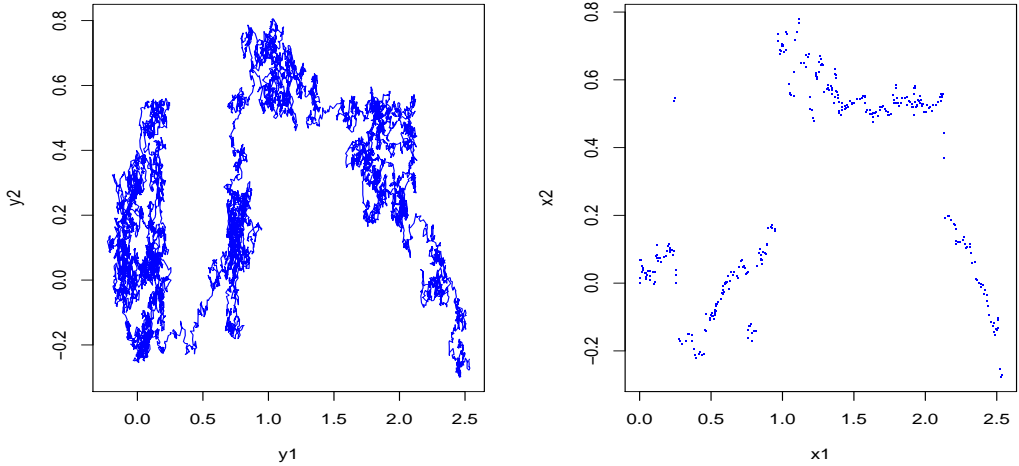}
\caption{2D Brownian trajectory $Y_s=(Y_s^1,Y_s^2)$ and associated stochastic wave $(X_t^1,X_t^2)=(t,Y^2_{S_t})$ when $\xi(y^1,y^2)=y^1$.}
\label{lip}
\end{center}
\end{figure}

%

\begin{Rem}[Soft versus hard killing times]
 It turns out that under Assumption~\ref{ass:up}, the killing time of the level-indexed process is typically ``soft'' in the sense that it is a totally inaccessible stopping time, i.e.~a stopping time that cannot be predicted (see~\cite{js03} for a precise definition, as well as the discussion in~\cite{cdgr2}). This is for instance a consequence of the Feller property when $X$ is Feller. Note that this is a stronger property than having an atomless distribution. 
 Interestingly, the CLT in~\cite{cdgr2} also holds true for ``hard'' killing times, so that it may be used to treat cases beyond Assumption~\ref{ass:up}. 
 \end{Rem}

The Markov semi-group of the level-indexed process, defined by

$$
Q^h\ph(x) \eqdef \E[\ph(X_h) | X_0=x],
$$

can be easily related to the integral operator $q(\ph)(y)=\E_{y}\b{\ph(Y_{S_1})\one_{S_1 < S_A}}$ as follows.
\begin{Lem}\label{lem:dirichlet_op}
For any $x\in F$ and any $\ph: F \to \R$ extended to $F \cup \set{\partial}$ with the convention $\ph(\partial) = 0$, one has
 $$
 Q^{1-\xi(x)}\ph(x) = q(\ph)(x).
 $$
\end{Lem}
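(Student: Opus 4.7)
The plan is to directly unravel the definitions, since both sides reduce to the same expectation once one chooses the correct time horizon.

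Fix $x \in F$ and set $h = 1 - \xi(x) \geq 0$, so that $(\xi(x) + h) \wedge 1 = 1$. By Definition~\ref{def:lev_ind}, starting from $X_0 = x$ (equivalently $Y_0 = x$), one has
$$
X_{h} = \begin{cases} Y_{S_1} & \text{if } S_1 < S_A, \\ \partial & \text{if } S_1 \geq S_A. \end{cases}
$$
Then I would simply apply $\varphi$ on both cases, use the convention $\varphi(\partial)=0$, and take expectation under $\P_x$:
$$
Q^{1-\xi(x)}\varphi(x) = \E_x[\varphi(X_{1-\xi(x)})] = \E_x[\varphi(Y_{S_1})\one_{S_1 < S_A}] + \varphi(\partial)\,\P_x(S_1 \geq S_A),
$$
and the second term vanishes, leaving exactly $q(\varphi)(x)$.

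There is really no obstacle here beyond checking that the definitions match: the only slightly nontrivial point is that $X_0=x$ is consistent with $Y_0=x$, which is granted by the first part of Assumption~\ref{ass:up} (built into Definition~\ref{def:lev_ind}). In particular, no use of the strong Markov property is needed since we start the level-indexed process from time zero; strong Markov would only enter if one wanted to establish, more generally, the semigroup identity $Q^{t-\xi(x)}\varphi(x) = \E_x[\varphi(Y_{S_t})\one_{S_t<S_A}]$ for arbitrary $t\in[\xi(x),1]$, from which the stated lemma is the special case $t=1$.
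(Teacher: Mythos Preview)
Your proof is correct and is exactly the direct unraveling of definitions that the paper intends; in fact the paper does not give a separate proof of this lemma, merely stating that $Q^h$ ``can be easily related'' to $q$, so your argument is precisely the omitted verification.
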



\subsection{AMS as a Fleming-Viot particle system}

The AMS Algorithm~\ref{alg:ams} can be recast in the form of a Fleming-Viot algorithm as studied in~\cite{cdgr2}. For this purpose, let us consider a time homogeneous c\`adl\`ag Markov process $\p{X_{h}}_{h \geq 0}$ in $F \cup \set{\partial}$, constructible from any initial condition in $F$. We assume that $\partial$ is an absorbing state, meaning that $X_{h'} \in \partial$ whenever $X_{h} \in \partial$ and $h' \geq h$. Let us first recall what we mean by Fleming-Viot particle system.

\begin{Def}[Fleming-Viot particle system]\label{def:FV}
An exchangeable particle system $(X_t^1, \ldots, X^N_t)_{t \geq 0}$ in $F^N$ is called the {\em Fleming-Viot} particle system associated with $\p{X_{h}}_{h \geq 0}$ if:
\begin{itemize}
\item Initialization: the particles are initially i.i.d.~with distribution $\eta_0$ 
$$X_0^1,\dots,X_0^N\ \overset{\rm i.i.d.}{\sim}\ \eta_0,$$
\item Evolution and killing: between branching times, each particle evolves independently according to the law of the underlying Markov process $X$ until one of them hits $\partial$,
\item Branching (or rebirth): the killed particle is taken from $\partial$ and is instantaneously given the state of one of the $(N-1)$ other particles - the choice being uniformly random,
\item And so on until time 1.
\end{itemize}
\end{Def}

Note that in order to be well-defined, a Fleming-Viot particle system should almost surely satisfy the following two properties: (i) particles die at different times, (ii) there is a finite number of branchings in the time interval $[0,1]$. Some conditions ensuring (i) and (ii) are given and discussed below. \medskip

The next result makes explicit the connection between the AMS algorithm, Fleming-Viot particle systems and the level-index process. 

\begin{Lem} \label{lem:x=y}
Let Assumptions~\ref{ass:feller} and~\ref{ass:up} hold true for the pair $(Y,\xi)$. Recall that the particles have initial level $0$, i.e.~$\eta_0 \p{\xi = 0}=1$. Consider the AMS Algorithm~\ref{alg:ams}. For each $n = 1,\ldots,N$ and each $t\in[0,1]$, set as before
 $$
S^{n}_t \eqdef \inf \set{ s \geq 0, \,  \xi(Y^{n,J_t}_s) > t} =\inf \set{ s \geq 0, \,  \xi(Y^{n,J_t}_s) = t},
 $$
 as well as 
 $$
 X^{n}_t \eqdef Y^{n,J_t}_{S^{n}_t}. 
 $$
Then $(X_t^1, \ldots, X^N_t)_{t \geq 0}$ is the Fleming-Viot particle system in $F =\set{0 \leq \xi \leq 1}$ associated with the level-indexed process $X$ of the pair $(Y,\xi)$ in the sense of Definition~\ref{def:lev_ind} and Definition~\ref{def:FV}. 
\end{Lem}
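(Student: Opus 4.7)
The plan is to induct on the branching index $j \geq 0$ and verify, at each step, the defining properties of a Fleming-Viot particle system given in Definition~\ref{def:FV}. The inductive hypothesis I would maintain is that, up to level $\tau_j$, the configuration $(X_t^1,\ldots,X_t^N)_{0 \leq t \leq \tau_j}$ has the joint law of the Fleming-Viot system based on the level-indexed process $X$, and that given the current configuration $(X_{\tau_j}^n)_{n=1,\ldots,N}$ the future trajectories $(Y^{n,j}_s)_{s \geq S^n_{\tau_j}}$ are conditionally independent fresh samples of the underlying Markov process $Y$ started at $X_{\tau_j}^n$.

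The base case $j=0$ follows from~\eqref{eq:ams_init}: the initial states $X_0^n = Y_0^{n,0}$ are i.i.d.~$\sim \eta_0$, and $\eta_0$ is supported on $\{\xi = 0\}$, matching the Fleming-Viot initialization. For the inductive step, I first treat the evolution between branchings. Since $J_t = j$ is constant on $[\tau_j,\tau_{j+1})$, one has $X_t^n = Y^{n,j}_{S^n_t}$, which by Definition~\ref{def:lev_ind} is precisely the level-indexed process of the Markov trajectory $(Y^{n,j}_s)_{s \geq S^n_{\tau_j}}$ at level-time $t-\tau_j$. For $n \neq N_j$, the Feller strong Markov property of $Y$ at the $\calF^Y$-stopping time $S^n_{\tau_j}$, combined with the inductive hypothesis, gives a fresh Markov evolution from $X_{\tau_j}^n$. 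For $n = N_j$, step~(iv) of Algorithm~\ref{alg:ams} directly prescribes an independent fresh simulation started at $Y^{M_j,j}_{\sigma_j}$; Assumption~\ref{ass:up} (strict entrance) yields $\xi(Y^{M_j,j}_{\sigma_j}) = \tau_j$ and $S^{M_j}_{\tau_j} = \sigma_j$, so that $X_{\tau_j}^{M_j} = Y^{M_j,j}_{\sigma_j} = X_{\tau_j}^{N_j}$. Since $M_j$ is uniform on $\{1,\ldots,N\} \setminus \{N_j\}$, this matches the rebirth rule of Definition~\ref{def:FV}.

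It then remains to identify the killed particle with the minimizer in~\eqref{eq:min_score}. Under the convention $\ph(\partial)=0$, Definition~\ref{def:lev_ind} shows that the level-time at which a particle started at $X_{\tau_j}^n$ enters $\partial$ equals the maximal score of $(Y^{n,j}_s)_{s \geq S^n_{\tau_j}}$ minus $\tau_j$; since $A \subset \{\xi < 0\}$, this is the same quantity appearing in~\eqref{eq:min_score}. Hence the argmin identifies the first particle to hit $\partial$ and $\tau_{j+1}$ is precisely its death level, while the other $N-1$ particles are still alive. The main subtlety, and principal obstacle, is to ensure that almost surely no two particles share the same death level on $[\tau_j,1]$, so that $N_{j+1}$ and $\tau_{j+1}$ are unambiguously defined and Definition~\ref{def:FV} genuinely applies: this relies on the atomless character of the jump-time distribution of the level-indexed process under Assumption~\ref{ass:up}, as established in Lemma~\ref{lem:jump_dens}, together with the almost sure finiteness of the number of branchings on $[0,1]$ provided by Proposition~\ref{pro:well_posed} under Assumption~\ref{ass:minor}.
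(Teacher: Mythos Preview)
Your approach is essentially the same as the paper's: both argue by induction on the branching index $j$ that the AMS iteration, rewritten in terms of the level-indexed states $X^n_t = Y^{n,J_t}_{S^n_t}$, reproduces exactly the evolution/killing/rebirth rules of Definition~\ref{def:FV}. The paper's proof is terser and more structural---it simply reformulates each step of Algorithm~\ref{alg:ams} in level-indexed language and observes that the result matches Definition~\ref{def:FV} ``by definition''---whereas you spell out the probabilistic content (conditional independence via strong Markov, identification of $\sigma_j$ with $S^{M_j}_{\tau_j}$, etc.). Both are valid.

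Two small points. First, your final paragraph imports Assumption~\ref{ass:minor} to guarantee finitely many branchings, but Lemma~\ref{lem:x=y} is stated only under Assumptions~\ref{ass:feller} and~\ref{ass:up}. The paper deliberately separates the structural identification (this lemma) from the well-posedness questions, which are deferred to Proposition~\ref{pro:well_posed}; your argument would be cleaner if you did the same and dropped that last discussion. Second, when you invoke the strong Markov property ``at the $\calF^Y$-stopping time $S^n_{\tau_j}$'' for $n\neq N_j$, note that $\tau_j$ depends on all $N$ trajectories and is not a stopping time of the filtration of particle $n$ alone; the correct statement is that the trajectory $Y^{n,j}=Y^{n,j-1}$ is unchanged at step $j$, so no fresh sampling is needed for those particles, and the level-indexed evolution on $[\tau_j,\tau_{j+1})$ is simply read off the already-existing full trajectory. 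The paper avoids this subtlety precisely by arguing at the level of full trajectories $Y^{n,j}$ rather than conditional restarts.
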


\begin{proof}
For $t \in [0,1]$, $ j\geq 0$ and $n =1,\ldots,N$, let us first define
$$S^{n,j}_t:=\inf\{s\geq 0,\ \xi(Y_s^{n,j})\in A\cup \{\xi=t\}\},$$
and 
$$
 X^{n,j}_t \eqdef  \begin{cases}
 Y^{n,j}_{S^{n,j}_t} &\mbox{if}\ \xi(Y^{n,j}_{S^{n,j}_t})=t\\
 \partial &\mbox{otherwise}
   \end{cases}
 $$
By Assumption~\ref{ass:up} and Remark~\ref{rem:lev_ind}, the initial condition satisfies 
$$(X_0^{1,0},\ldots,X_0^{N,0})=(Y_0^{1,0},\ldots,Y_0^{N,0})  \in \{\xi=0\}^N,$$ so that 
$$\xi(X_0^{1,0})=\dots=\xi(X_0^{N,0})=0.$$
Note also that for all $t \in [0,1]$, if $X^{n,j}_t\neq\partial$, then
$$
\xi(X^{n,j}_t)=\xi(Y^{n,j}_{S^{n,j}_t}) = t,
$$
so that $X^{n,j}$ is indeed the level-indexed process associated with $Y^{n,j}$ in the sense of Definition~\ref{def:lev_ind}. \medskip

Set $\tau_0=0$. By construction of the AMS Algorithm~\ref{alg:ams}, the processes $(X_t^{1,j},\ldots,X_t^{N,j})_{0 \leq t \leq 1}$ can thus be iteratively constructed for $j \geq 1$ as follows:
\begin{enumerate}[(i)]
\item We can reformulate $N_j$ and $\tau_j$ defined in the AMS Algorithm~\ref{alg:ams} as
 \begin{equation}
  \left\{\begin{array}{l}
  N_j\eqdef\argmin_{n=1, \ldots, N} \sup_{0 \leq s \leq S_A^{n,j-1} \wedge S_1^{n,j-1}} \, \xi(Y^{n,j-1}_s)\\
  \tau_j:=\sup_{0 \leq s \leq S_A^{N_j,j-1} \wedge S_1^{N_j,j-1}} \xi(Y^{N_j,j-1}_s)
 \end{array} \right.
  \end{equation}
\item Stop if $\tau_j = 1$, i.e.~all trajectories are still alive at time $1$. 
\item Set $X^{n,j} \eqdef X^{n,j-1}$ for $n \neq N_j$. Pick a number $M_j$ uniformly at random in $\set{1, \ldots, N} \setminus \set{N_j}$.
\item Replace the trajectory on $[0,\tau_j]$ of the particle with index $N_j$ with the trajectory of the particle with index $M_j$, that is set $(X_t^{N_j,j})_{0\leq t\leq \tau_j} \eqdef (X_t^{M_j,j})_{0\leq t\leq \tau_j}$.
Let particle $N_j$ evolve independently starting from state $X_{\tau_j}^{N_j,j}$ at time $\tau_j$, until time $1$ or until it is killed. 
\end{enumerate}
If we now set
$$ X^n_t \eqdef X^{n,J_t}_t =  X^{n,j-1}_t \qquad \text{for \, } \tau_{j-1} \leq t < \tau_{j}$$
for $n = 1, \ldots, N$ and $ j\geq 1$, we thus obtain by definition the Fleming-Viot particle system associated with the level-indexed Markov process $X$ of Definition~\ref{def:FV}.
\end{proof}

\subsection{$L^2$ estimate and CLT for Fleming-Viot particle systems}\label{oliach}

Building on~\cite{cdgr2}, we can now present two sufficient assumptions to obtain the desired $L^2$-estimate and CLT for Fleming-Viot particle systems based on the level-indexed processes. \medskip

The first assumption is the following:
\renewcommand{\theAssN}{(\~A)}
\begin{AssN}\label{ass:H1} This assumption has two parts.
\begin{enumerate}
\item[(i)] For any initial condition $x \in F=\{0\leq\xi\leq 1\}$, the jump ``times'' of the level-indexed process $(X_h)_{h \geq 0}$ have an atomless distribution:
$$
\P\p{ X_{h^-} \neq X_h \mid X_{0} = x } = 0 \qquad \forall x \in F,\ \forall h \geq 0.
$$
\item[(ii)] If $\ph \in C_b(\{\xi=1\})$, then the mapping $x \mapsto q(\ph)(x)=Q^{1-\xi(x)}\ph(x)$ is continuous on $F$.
\end{enumerate}
\end{AssN}

The second key assumption is simply:
\renewcommand{\theAssN}{(B)}
\begin{AssN}\label{ass:H2} The Fleming-Viot particle system is well-defined in the sense that $\P(J_1 < + \infty)=1$, where $J_1$ denotes the number of branchings until final time $1$.
\end{AssN}

Under these assumptions, \cite{cdgr2} implies the following (see Section~\ref{sec:cdgr2} for details on how to rigorously import the content of~\cite{cdgr2}).

\begin{The}\label{th:cdgr2} Under Assumptions~\ref{ass:H1} and~\ref{ass:H2}, one has the following:
\begin{itemize}
 \item The Fleming-Viot particle system is well-posed in the sense that only one particle is killed at each branching time. 
 \item $L^2$ estimate: for any $\ph \in C_b(\{\xi=1\})$,
 $$\E\b{ \p{\gamma^N_1(\ph) - \gamma_1(\ph)}^2     } \leq \frac{6 \norm{\ph}_\infty^2}{N}.$$
 \item Central Limit Theorem: for any $\ph \in C_b(\{\xi=1\})$, 
$$\sqrt{N}\left(\gamma_1^N(\ph)-\gamma_1(\ph)\right)\xrightarrow[N\to\infty]{\cal D}{\cal N}(0,\sigma_1^2(\ph)),$$
where
$$\sigma_1^2(\ph) = p^2_T \V_{\eta_1}(\ph) - p_1^2\log(p_1) \, \eta_1(\ph)^2 - 2\int_0^1 \V_{\eta_{t}}(Q^{1-t}(\ph)) p_t dp_t.$$
\end{itemize}
\end{The}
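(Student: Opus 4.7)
The plan is to realize the Fleming--Viot particle system built from the level-indexed process $X$ as an instance of the abstract framework of~\cite{cdgr2}, and then to import each of the three conclusions (well-posedness, $L^2$ bound, CLT) directly. To set this up, I would first record that, by Remark~\ref{rem:lev_ind}, the level-indexed process $X$ is a time-homogeneous strong Markov process with c\`adl\`ag trajectories in the Polish space $F\cup\{\partial\}$, with $\partial$ absorbing, and that Definition~\ref{def:FV} matches verbatim the $N$-particle Fleming--Viot system analyzed in~\cite{cdgr2} on the time interval $[0,1]$ with initial distribution $\eta_0$ supported in $F$.

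Next I would translate Assumptions~\ref{ass:H1} and~\ref{ass:H2} into the hypotheses of~\cite{cdgr2}. The atomless condition~\ref{ass:H1}(i) is exactly the ``soft killing'' property: the killing time has an atomless distribution from any initial state, which in~\cite{cdgr2} simultaneously ensures that no two particles of the system are killed at the same time (so that at each branching a single particle is resampled) and that the non-asymptotic constant $6$ in the $L^2$ estimate is universal. The continuity condition~\ref{ass:H1}(ii) on $x\mapsto Q^{1-\xi(x)}\ph(x)$ is the semigroup-regularity hypothesis used in~\cite{cdgr2} to pass to the limit in the variance term of the CLT. Assumption~\ref{ass:H2} is the non-explosion hypothesis $\mathbb{P}(J_1<\infty)=1$ needed to close the induction on branching times up to the final horizon.

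With this dictionary in place, the three conclusions follow by direct application of the corresponding statements of~\cite{cdgr2}: well-posedness of the branching, the non-asymptotic bound $\mathbb{E}[(\gamma_1^N(\ph)-\gamma_1(\ph))^2]\leq 6\|\ph\|_\infty^2/N$ on bounded continuous observables, and the CLT with limiting variance written in terms of the semigroup $Q^{1-t}$. The last step is to identify this variance with the one in the statement: for any $t\in[0,1]$, the measure $\eta_t$ is supported on the level set $\{\xi=t\}$, so by Lemma~\ref{lem:dirichlet_op} one has $Q^{1-t}\ph(x)=Q^{1-\xi(x)}\ph(x)=q(\ph)(x)$ for $\eta_t$-almost every $x$, hence $\mathbb{V}_{\eta_t}(Q^{1-t}\ph)=\mathbb{V}_{\eta_t}(q(\ph))$; substituting into the formula of~\cite{cdgr2} yields precisely $\sigma_1^2(\ph)$ as stated.

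The main obstacle is not conceptual but notational and organizational: aligning the conventions of the two papers, in particular the definition of the unnormalized flow $\gamma_t^N=p_t^N\eta_t^N$, the use of the exponent $J_t$ counting branching events in $[0,t]$, the identification of the killing set with the boundary level set $\{\xi=1\}$, and checking that Assumption~\ref{ass:H1}(i) and the Feller property established earlier for $X$ together imply the ``total inaccessibility'' formulation used in~\cite{cdgr2}. This bookkeeping is precisely the content that is deferred to Section~\ref{sec:cdgr2}; once carried out carefully, Theorem~\ref{th:cdgr2} is obtained by verbatim application of the corresponding results of~\cite{cdgr2}.
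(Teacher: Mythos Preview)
Your proposal is correct and follows essentially the same route as the paper: identify the AMS/Fleming--Viot system with the abstract setting of~\cite{cdgr2}, translate Assumptions~\ref{ass:H1} and~\ref{ass:H2} into the hypotheses used there, and import the three conclusions verbatim, with Lemma~\ref{lem:dirichlet_op} handling the identification $Q^{1-t}\ph=q(\ph)$ on $\{\xi=t\}$ in the variance formula.

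The only point where your dictionary is slightly off is the claim that Assumption~\ref{ass:H1}(i) is ``exactly'' the soft-killing hypothesis of~\cite{cdgr2}. In the paper's account (Section~\ref{sec:cdgr2}), the operative assumption in~\cite{cdgr2} is the weaker, particle-system--level Assumption~\ref{ass:J} (denoted~(A')), which is phrased in terms of the processes $\L^n_t=Q^{1-t}\ph(X^n_t)$ not jumping simultaneously nor at branching times of other particles. The required implication is therefore $(\tilde A)\Rightarrow(A')$, and the paper points out that this is proved by the same argument as Lemma~3.1 of~\cite{cdgr2}; no appeal to the Feller property of $X$ or to total inaccessibility is needed. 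Your closing paragraph already flags that some bookkeeping is deferred to Section~\ref{sec:cdgr2}, so this is a minor sharpening rather than a gap.
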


Proposition~\ref{gamma0}, Theorem~\ref{gamma} and Corollary~\ref{eta} are then consequences of the following:
\begin{Lem}\label{lem:check_ass}
 Assumptions~\ref{ass:feller} and \ref{ass:up} imply Assumption~\ref{ass:H1}. With the addition of Assumption~\ref{ass:minor}, they also imply Assumption~\ref{ass:H2}.
\end{Lem}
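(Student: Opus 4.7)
The strategy is to verify the three required conditions in turn, namely the two parts of Assumption~\ref{ass:H1} and Assumption~\ref{ass:H2}. The two parts of Assumption~\ref{ass:H1} are essentially repackagings of two auxiliary lemmas already announced in the text (Lemma~\ref{lem:jump_dens} for the atomless jump structure and Lemma~\ref{lem:H1ii} for the continuity of $q(\ph)$), so the real work for them is deferred to those lemmas. The genuinely new step is the almost sure finiteness of $J_1$, obtained by a coupling argument exploiting the uniform positive lower bound of Assumption~\ref{ass:minor}; this is where the main care is needed.

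\textbf{Verification of Assumption~\ref{ass:H1}.} For part~(i), fix $x\in F$ and $h\geq 0$. By the very construction of the level-indexed process, on the survival event $\set{X_h\neq\partial}$ the event $\set{X_{h^-}\neq X_h}$ corresponds exactly to a discontinuity of the c\`adl\`ag map $t\mapsto Y_{S_t}$ at the deterministic level $t=(\xi(x)+h)\wedge 1$. Since $Y$ is strong Markov (being Feller) and Assumption~\ref{ass:up}, in particular~\eqref{eq:down} extended to~\eqref{eq:downbis}, holds, Lemma~\ref{lem:jump_dens} ensures that the law of the jump levels of $t\mapsto Y_{S_t}$ has no atom; hence $\P\p{X_{h^-}\neq X_h\mid X_0=x}=0$. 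For part~(ii), Lemma~\ref{lem:H1ii} (itself obtained from Assumptions~\ref{ass:feller} and~\ref{ass:up} through the continuity Lemma~\ref{lem:time_fin}) ensures that for every $\ph\in C_b(\set{\xi=1})$, the function $q(\ph)=Q^{1-\xi(\cdot)}\ph$ is bounded and continuous on $F$, which is precisely Assumption~\ref{ass:H1}(ii).

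\textbf{Verification of Assumption~\ref{ass:H2}.} Set $p_{\min}\eqdef\inf_{y\in\set{\xi=0}}\P_y(S_1<S_A)$, which is positive by Assumption~\ref{ass:minor} and, by the strong Markov identity stated just after that assumption, equals $\inf_{y\in\set{0\leq\xi\leq 1}}\P_y(S_1<S_A)$. Let $\calF_j$ denote the natural filtration generated by the first $j$ steps of the AMS Algorithm and let $B_j$ be the indicator of the event that the trajectory resampled at the $j$-th branching reaches level $1$ before hitting $A$. Because that trajectory starts from $Y^{M_j,j}_{\sigma_j}\in\set{0\leq\xi\leq 1}$ and is then simulated independently according to the law of $Y$, the strong Markov property yields $\P(B_j=1\mid\calF_{j-1})\geq p_{\min}$. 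Moreover, any particle whose score reaches $1$ has maximal score and therefore cannot be selected as minimum at any later step, so it remains frozen until the algorithm stops. Denoting by $N_0\in\set{0,\dots,N}$ the number of initial trajectories reaching level $1$ unassisted, the algorithm terminates at the first $j$ for which $N_0+\sum_{i=1}^j B_i=N$. A standard coupling of $(B_j)$ from below by an i.i.d.\ $\mathrm{Bernoulli}(p_{\min})$ sequence, whose partial sums diverge almost surely, then forces $J_1<+\infty$ almost surely. The crucial point here is that the uniform bound $p_{\min}$ applies from \emph{any} starting state in $\set{0\leq\xi\leq 1}$, so that it can be invoked at each of the random branching times regardless of the current configuration of the particle system.
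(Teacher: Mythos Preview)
Your argument is correct and mirrors the paper's own: parts~(i) and~(ii) of Assumption~\ref{ass:H1} are delegated to Lemmas~\ref{lem:jump_dens} and~\ref{lem:H1ii}, while Assumption~\ref{ass:H2} is obtained from the uniform lower bound of Assumption~\ref{ass:minor} (the paper bounds $\E[\mathcal{J}_n]\leq 1/p_{\min}$ for each particle and sums, rather than counting global successes, but the idea is identical). Two small corrections are in order: in part~(i) the relevant clause of Assumption~\ref{ass:up} is~\eqref{eq:up}, not~\eqref{eq:down}; and your restriction to the survival event $\{X_h\neq\partial\}$ leaves the killing case uncovered, but this too is handled by Lemma~\ref{lem:jump_dens}, since $X_{h^-}\neq\partial=X_h$ forces $S_{t_h^-}<S_A\leq S_{t_h}$ and hence a jump of $t\mapsto S_t$ at the deterministic level $t_h$.
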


The proof of Lemma~\ref{lem:check_ass} is given in Section~\ref{sec:check_ass}. For now, let us just give some intuition behind this result. First, assume for simplicity that $\xi(X_0) = 0$ and that a jump of the level-indexed process occurs, i.e.~$X_{t^-}\neq X_t$ for a given $t \in [0,1]$. Then,
since by left continuity $X_{t^-}=Y_{S_{t^-}}$, this jump means that $s \mapsto \xi(Y_s)$ has a local maximum with value $t$. However, under Assumption~\ref{ass:up}, this is prohibited since the $Y$-hitting times of levels $>t$ and $\geq t$ are equal almost surely.\medskip

Second, the continuity of $x \mapsto q(\ph)(x)=\E_x \b{\ph(Y_{S_1})\one_{S_1 < S_A}}$ is a consequence of the fact that $Y$ is Feller (Assumption~$1$) and that for Feller processes, hitting times of the interior or the closure of, respectively, $A$ and $\set{ \xi \geq 1}$ are the same.\medskip

Finally, the fact that the algorithm has almost surely a finite number of branchings (non-explosion) comes directly from the uniform lower bound of Assumption~\ref{ass:minor} through a comparison with a geometric random sequence.

%

\appendix

\section{Preliminaries on Feller processes}

In this section, we recall some standard properties of continuous Feller processes. Most proofs are detailed in~\cite{cdgr2} in the case of c\`adl\`ag processes.

\begin{Def} Let $E$ be a locally compact Polish space. Let $C_0(E)$ denote the space of continuous functions that 
vanish at infinity. A continuous process $(Y_s)_{s \geq 0}$ in $E$ is Feller if each of its probability transition maps $C_0(E)$ into itself: for all $\ph \in C_0(E)$ and $s \geq 0$, $\p{y \mapsto \E_y[\ph(Y_s)]} \in C_0(E)$.
\end{Def}
Feller processes have many useful standard properties: (i) The associated natural filtration $\calF^Y_s \eqdef \sigma\p{Y_{s'}, \, 0 \leq s' \leq s}$ is right-continuous; (ii) $Y$ is strong Markov with respect to $\calF^Y$. \medskip

We will need the following slightly less standard pathwise continuity of Feller processes.
\begin{Lem}\label{lem:cont_feller}
Let $C(\R_+,E)$ denote the space of continuous trajectories endowed with uniform convergence on compacts. 
Let $\p{Y^y_{s}}_{s \geq 0} \in C(\R_+,E)$ denote a given Feller process with initial condition $Y_0=y$. 
Then the mapping $y \mapsto {\cal L}\p{{\p{Y^y_{s}}_{s \geq 0}}}$ from $E$ to probabilities on $C(\R_+,E)$, 
endowed with convergence in distribution, is continuous.
\end{Lem}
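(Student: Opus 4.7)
The plan is to verify convergence in distribution on $C([0,T],E)$ for each $T > 0$, which implies convergence on $C(\R_+, E)$ endowed with uniform convergence on compacts. By Prokhorov's theorem, this reduces, for any sequence $y_n \to y$ in $E$, to proving (i) convergence of the finite-dimensional distributions of $Y^{y_n}$ towards those of $Y^y$, and (ii) tightness in $C([0,T],E)$ of the family $\set{\Law((Y^{y_n}_s)_{0 \le s \le T})}_n$.

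For (i), I would proceed by induction on the number of time points. The base case $k=1$ is exactly the Feller property: for $f \in C_0(E)$, the map $y \mapsto \E_y[f(Y_s)]$ is continuous. For the inductive step, given $0 \le s_1 < \cdots < s_k$ and $f_1, \ldots, f_k \in C_0(E)$, set
$$g(z) \eqdef \E_z\b{f_2(Y_{s_2 - s_1}) \cdots f_k(Y_{s_k - s_1})},$$
which is bounded and continuous by the inductive hypothesis. Conditioning at time $s_1$ via the Markov property yields
$$\E_y\b{f_1(Y_{s_1}) \cdots f_k(Y_{s_k})} = \E_y\b{(f_1 g)(Y_{s_1})},$$
and since $f_1 \in C_0(E)$ with $g$ bounded continuous, we have $f_1 g \in C_0(E)$, so the Feller property gives continuity of the left-hand side in $y$.

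For (ii), I would invoke the standard tightness criterion in $C([0,T],E)$: pointwise tightness at each $s \in [0,T]$ (which is immediate from (i), since convergent sequences of probability measures are tight) together with a uniform control of the modulus of continuity of the paths. The latter is the main obstacle. Since $y_n \to y$ in the locally compact space $E$, there exists a compact $K$ containing $y$ and all but finitely many $y_n$; one first establishes a short-time oscillation estimate of the form
$$\lim_{\delta \to 0} \sup_{z \in K} \P_z\p{\sup_{0 \le s \le \delta} d(Y_s, z) > \eps} = 0$$
for each $\eps > 0$ and compatible metric $d$, which follows from the continuity of sample paths combined with the Feller property via a standard compactness argument. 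Partitioning $[0,T]$ into intervals of length $\delta$ and iterating this bound using the strong Markov property then yields the desired uniform modulus-of-continuity estimate. A convenient alternative that hides some of this work is to pass through the one-point compactification $E^* = E \cup \set{\partial}$, on which the Feller semigroup extends trivially and on which the continuity of the path-law map for Feller processes is classical; the conclusion on $C(\R_+, E)$ is then recovered by using the fact that trajectories starting in $E$ never reach $\partial$.
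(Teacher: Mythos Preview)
Your argument is correct and considerably more self-contained than the paper's. The paper's proof is a two-line reduction: it imports the result in the Skorokhod space $D(\R_+,E)$ from Lemma~4.3 of the companion paper~\cite{cdgr2} (which in turn appeals to Theorem~17.25 of Kallenberg~\cite{Kall}), and then observes that the Skorokhod topology and local uniform convergence coincide on $C(\R_+,E)$ (Ethier--Kurtz~\cite{ethierkurtz}, Chapter~3, Lemma~10.1). You instead rebuild the result from finite-dimensional convergence plus tightness. Part~(i) is clean. For part~(ii), be aware that your iteration over subintervals of $[0,T]$ has a wrinkle: after one step the process may have left the compact $K$ containing the initial conditions, so the uniform-over-$K$ oscillation estimate does not directly reapply, and one would first need to confine the whole path to a larger compact---which is essentially the compact-containment half of the tightness you are proving. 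Your ``convenient alternative'' via the one-point compactification $E^\ast$ is precisely the device that dissolves this circularity (on a compact state space the oscillation bound is uniform over all starting points, so the iteration goes through unhindered), and is in spirit close to the paper's citation of the general $D(\R_+,E)$ result. In the end both proofs lean on the same compact-state-space machinery; you reach it by explicit construction, the paper by reference.
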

\begin{proof} In~\cite{cdgr2} Lemma~$4.3$, the convergence is shown in the Skorokhod space instead of $C(\R_+,E)$ using Theorem~$17.25$ of~\cite{Kall}. The Skorokhod topology and the topology of uniform convergence on compacts on $C(\R_+,E)$ are known to be the same on continuous trajectories,
see Lemma~$10.1$, Chapter~$3$ of~\cite{ethierkurtz} (see also Problem~$7$ Chapter~VI in~\cite{pollard}). 
Hence the result.
\end{proof}

We then recall some lower and upper semi-continuity of hitting times with respect to the locally uniform topology
\begin{Lem}\label{lem:stop_time_cont} Let $B \subset E$ be a Borel set.
 For each $y\in C(\R_+,E)$, define $ s_{\mathring{B}}(y) \eqdef \inf \{s \geq 0,\   y_s \in \mathring{B}\},$
 as well as $ s_{\bar{B}}(y)  \eqdef \inf \{s \geq 0,\ y_s \in \bar{B}\}.$ 
 Then $s_{\mathring{B}}$ is upper semi-continuous in $C(\R_+,E)$
 and $s_{\bar{B}}$ is lower semi-continuous in $C(\R_+,E)$:
 for any  sequence  $(y^n)_{n \geq 1}$ converging to $y\in C(\R_+,E)$,
 \begin{align*}& \limsup_n s_{\mathring{B}} \p{y^n} \leq s_{\mathring{B}}\p{ y}, \\
 &s_{\bar{B}} \p{y} \leq \liminf_n s_{\bar{B}} \p{y^n}.
 \end{align*}
\end{Lem}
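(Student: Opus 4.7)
The two statements are classical facts about entrance times into open and closed sets by continuous paths, and the proof will exploit separately the open/closed nature of $\mathring{B}$ and $\bar{B}$ together with uniform convergence on compacts. Throughout, I will use that if $y^n \to y$ in $C(\R_+,E)$, then $y^n_{s_n} \to y_s$ whenever $s_n \to s \in \R_+$, a standard consequence of equicontinuity on any fixed compact interval.

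For the upper semi-continuity of $s_{\mathring{B}}$, I may assume $s_{\mathring{B}}(y) < +\infty$, otherwise the bound is trivial. Fix $\eps > 0$. By definition of the infimum, there exists $s \in [s_{\mathring{B}}(y), s_{\mathring{B}}(y) + \eps)$ such that $y_s \in \mathring{B}$. Since $\mathring{B}$ is open and $y^n_s \to y_s$, one has $y^n_s \in \mathring{B}$ for all $n$ large enough. Hence $s_{\mathring{B}}(y^n) \le s < s_{\mathring{B}}(y) + \eps$ eventually, which gives $\limsup_n s_{\mathring{B}}(y^n) \leq s_{\mathring{B}}(y) + \eps$, and letting $\eps \to 0$ concludes.

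For the lower semi-continuity of $s_{\bar{B}}$, set $t^* \eqdef \liminf_n s_{\bar{B}}(y^n)$ and assume $t^* < + \infty$ (otherwise there is nothing to prove). Extracting a subsequence, I may further assume $s_n \eqdef s_{\bar{B}}(y^n) \to t^*$. The key observation is that since $y^n$ is continuous and $\bar{B}$ is closed, the set $\{s \geq 0,\ y^n_s \in \bar{B}\}$ is closed in $\R_+$, hence its infimum is attained: $y^n_{s_n} \in \bar{B}$. Uniform convergence of $y^n$ to $y$ on a compact interval containing $\{s_n\}$ for $n$ large yields $y^n_{s_n} \to y_{t^*}$, and closedness of $\bar{B}$ gives $y_{t^*} \in \bar{B}$. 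Therefore $s_{\bar{B}}(y) \leq t^*$, as required.

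The only subtle point is the attainment of the infimum in the lower semi-continuity part: one really needs $\bar{B}$ closed (not merely $B$ Borel) so that $y^n_{s_n} \in \bar{B}$, and then continuity of the limit trajectory plus uniform convergence on compacts to transfer this membership to $y_{t^*}$. Everything else is a direct infimum argument using the pointwise convergence $y^n_s \to y_s$ and the open/closed structure of the target sets.
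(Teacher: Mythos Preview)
Your proof is correct and is the standard direct argument for these semi-continuity properties. The paper itself does not give a proof but simply refers to Lemma~4.4 in~\cite{cdgr2}, so your self-contained argument is a perfectly adequate replacement; the approach you take---using openness of $\mathring{B}$ together with pointwise convergence for the upper semi-continuity, and attainment of the infimum plus closedness of $\bar{B}$ for the lower semi-continuity---is exactly what one would expect to find behind that citation.
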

\begin{proof}
See Lemma~$4.4$ in~\cite{cdgr2}.
%
\end{proof}

We can then conclude with the general property used to prove the continuity of the integral operator $q$ defined in~\eqref{eq:cont_dir} (see Lemma~\ref{lem:H1ii}). We denote as before $
S_{\bar B}  \eqdef \inf \{s \geq 0,\  Y_s \in \bar{B}\} $ as well as $S_{\mathring B} \eqdef \inf \{s \geq 0,\   Y_s \in \mathring B\}$.

\begin{Lem}\label{lem:time_fin}
  Let $B \subset E$ be a Borel set, $Y$ be a continuous Feller process, and $\lim_n y^n= y$ a converging sequence of initial conditions. If
  \begin{equation}\label{eq:time_cond}
    \P_{y} \p{S_{\bar B} = S_{\mathring B}} =1,
 \end{equation}
then the distribution of $S_B$ under $\P_{y^n}$ converges when $n \to +\infty$ towards its distribution under $\P_{y}$. \medskip
If moreover $\P_y(S_B < + \infty) > 0$, then the joint distribution of $(S_B,Y_{S_B})$ in $\R_+ \times E$ under $\P_{y^n}\p{\quad | S_B < + \infty}$ converges when $n \to +\infty$ towards the joint distribution under $\P_{y}\p{\quad | S_B < + \infty}$.
\end{Lem}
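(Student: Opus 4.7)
The strategy is to reduce the statement to an application of the continuous mapping theorem, combined with the continuity of $y\mapsto \Law(Y^y)$ on $C(\R_+,E)$ from Lemma~\ref{lem:cont_feller} and the semi-continuity estimates of Lemma~\ref{lem:stop_time_cont}.

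First, I would establish the following pathwise fact. Since $\mathring B\subset B\subset \bar B$ implies $s_{\bar B}(w)\leq s_B(w)\leq s_{\mathring B}(w)$ pointwise on $C(\R_+,E)$, Lemma~\ref{lem:stop_time_cont} yields, for any converging sequence $w^n\to w$ satisfying $s_{\bar B}(w)=s_{\mathring B}(w)$, the squeezing
\[
s_{\bar B}(w)\leq \liminf_n s_{\bar B}(w^n)\leq \liminf_n s_B(w^n)\leq \limsup_n s_B(w^n)\leq \limsup_n s_{\mathring B}(w^n)\leq s_{\mathring B}(w).
\]
Hence $s_B(w^n)\to s_B(w)$ in the compact space $[0,+\infty]$, which by the assumption~\eqref{eq:time_cond} means that $s_B$ is continuous at $\P_y$-almost every trajectory. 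Combined with the weak convergence $\Law(Y^{y^n})\to \Law(Y^y)$ provided by Lemma~\ref{lem:cont_feller} and the continuous mapping theorem on $[0,+\infty]$, this yields the first claim.

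For the joint statement, I would use that the evaluation map $(s,w)\mapsto w_s$ is jointly continuous from $[0,+\infty)\times C(\R_+,E)$ into $E$, so that $\Phi:w\mapsto (s_B(w),w_{s_B(w)})$ is continuous at $\P_y$-almost every trajectory in $\set{s_B<+\infty}$. For any bounded continuous $f$ on $\R_+\times E$ and any cut-off $\psi_M\in C_c([0,+\infty))$ with $0\leq \psi_M\leq 1$ and $\psi_M\nearrow \one_{[0,+\infty)}$, the continuous mapping theorem applied to the functional $w\mapsto \psi_M(s_B(w))\,f(\Phi(w))$ yields convergence of its expectation under $\P_{y^n}$ to that under $\P_y$. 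Letting $M\to\infty$ and using dominated convergence together with $\P_{y^n}(S_B<\infty)\to \P_y(S_B<\infty)>0$ (which itself follows from the first step, since $[0,+\infty)$ is open in $[0,+\infty]$) gives
\[
\E_{y^n}\b{f(S_B,Y_{S_B})\one_{S_B<\infty}}\xrightarrow[n\to\infty]{} \E_y\b{f(S_B,Y_{S_B})\one_{S_B<\infty}},
\]
and Slutsky's lemma concludes the conditional joint convergence.

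The main obstacle is the truncation argument needed to handle the possible atom of $\Law_y(S_B)$ at $+\infty$; the rest is a mechanical application of the continuous mapping theorem on the $\P_y$-full measure set where $s_B$ is continuous.
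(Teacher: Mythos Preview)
Your approach is correct and mirrors the paper's: the paper invokes Skorokhod's representation theorem to upgrade the weak convergence of Lemma~\ref{lem:cont_feller} to almost sure convergence of trajectories, then applies the same squeezing via Lemma~\ref{lem:stop_time_cont} to obtain $S_B^n\to S_B$ almost surely, and concludes the joint statement ``by continuity of $Y$''---your continuous-mapping formulation is the dual viewpoint, and your truncation makes the second part more explicit than the paper does. One caveat: openness of $[0,\infty)$ in $[0,\infty]$ yields only $\liminf_n\P_{y^n}(S_B<\infty)\geq\P_y(S_B<\infty)$ by Portmanteau, not the equality you claim; the paper's one-line ``by continuity of $Y$'' glosses over the same point, and in every application in the paper one in fact has $\P_y(S_B<\infty)=1$.
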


\begin{proof}
Using Lemma~\ref{lem:cont_feller} and a Skorokhod embedding argument, a sequence $(Y^n_s)_{s \geq 0}$ of Feller processes with initial conditions $(y^n)_{n \geq 0}$ can be constructed on a single probability space so that $\lim_n Y^n = Y$ in $C(\R_+,E)$ almost surely, where $Y$ denotes the Feller process with initial condition $y$. Then Lemma~\ref{lem:stop_time_cont} with~\eqref{eq:time_cond} implies that $\lim_n S^n_{B} = S_B$, hence the first result. The second result follows by continuity of $Y$.
\end{proof}

In order to obtain a pathwise version of the main CLT of the present paper, we will need 
a pathwise version of the latter continuity result. For this purpose, let us recall that the Polish space 
  $C_{\rm stop}(\R_+,E)$ of continuous paths with a possibly given end time (see Definition~\ref{def:path_topo}) 
is equipped with a topology defined by the convergence of end times and of processes stopped at the end time uniformly 
on any finite time intervals. \medskip

The following technical lemma about the continuity of the extension of paths will prove useful.
\begin{Lem}\label{lem:map_ext} The extension map 
$$
\begin{array}{rcl}
T\ :\ C_{\rm stop}(\R_+,E) \times C(E,\R_+)  &\longrightarrow & C(E,\R_+) \\
y= \p{ (y_{s'})_{0 \leq s' \leq s} , (\tilde{y}_{h})_{h \geq 0} } & \longmapsto & Ty= \left\{\begin{array}{ll} y_{s'}& s' \leq s \\  \tilde{y}_{s'-s}&  s' \geq s \end{array}\right.
\end{array}
$$
defined for paths satisfying $y_s=\tilde{y}_0$ is continuous.
\end{Lem}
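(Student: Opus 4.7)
The plan is to fix a converging sequence $y^n = \p{ (y^n_{s'})_{0\leq s' \leq s^n}, (\tilde{y}^n_h)_{h \geq 0} } \to y = \p{ (y_{s'})_{0\leq s' \leq s}, (\tilde{y}_{h})_{h \geq 0} }$ in the product topology and show that $Ty^n \to Ty$ in $C(\R_+,E)$, i.e.~uniformly on every compact $[0,K] \subset \R_+$. By definition of the topology on $C_{\rm stop}(\R_+,E)$, one has $s^n \to s \in \overline{\R}_+$ and $(y^n_{s' \wedge s^n})_{s' \geq 0} \to (y_{s' \wedge s})_{s' \geq 0}$ uniformly on compacts, while by the product topology on $C(\R_+,E)$, $\tilde{y}^n \to \tilde{y}$ uniformly on compacts. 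I would treat the two cases $s = +\infty$ and $s < +\infty$ separately.

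\textbf{Case $s = +\infty$.} Here $s^n \to +\infty$, so for $n$ large enough $s^n > K$. Then for all $s' \in [0,K]$ one has $Ty^n(s') = y^n_{s'} = y^n_{s' \wedge s^n}$ and $Ty(s') = y_{s'} = y_{s' \wedge s}$. Uniform convergence on $[0,K]$ is then immediate from the convergence of the stopped paths.

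\textbf{Case $s < +\infty$.} Without loss of generality, assume $K > s$. I would split $[0,K]$ into three pieces: $I_1 = [0,\min(s,s^n)]$, $I_2 = (\min(s,s^n),\max(s,s^n)]$, and $I_3 = (\max(s,s^n),K]$. On $I_1$ the two paths read $y^n_{s'}$ and $y_{s'}$, so the convergence of the stopped paths handles this region. On $I_3$ we have $Ty^n(s') = \tilde{y}^n_{s'-s^n}$ and $Ty(s') = \tilde{y}_{s'-s}$. A triangle inequality
\[
d(\tilde{y}^n_{s'-s^n},\tilde{y}_{s'-s}) \leq d(\tilde{y}^n_{s'-s^n},\tilde{y}_{s'-s^n}) + d(\tilde{y}_{s'-s^n},\tilde{y}_{s'-s}),
\]
combined with uniform convergence of $\tilde{y}^n$ on $[0,K]$ and uniform continuity of $\tilde{y}$ on compacts (together with $s^n \to s$), provides uniform convergence on $I_3$. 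The interval $I_2$ shrinks to a point since $|s - s^n| \to 0$; on it both $Ty^n(s')$ and $Ty(s')$ lie close to $\tilde{y}_0 = y_s$: indeed $Ty(s') = y_{s'}$ is close to $y_s$ by continuity of $y$ at $s$, and $Ty^n(s')$ equals either $y^n_{s'}$ (close to $y^n_{s^n}$ which converges to $y_s$ by the stopped-path convergence) or $\tilde{y}^n_{s'-s^n}$ (close to $\tilde{y}^n_0 = y^n_{s^n}$, again close to $y_s$), where we used $|s'-s^n|\leq |s-s^n| \to 0$. Gluing the three estimates yields the desired uniform convergence.

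\textbf{Main obstacle.} The only delicate step is the transition interval $I_2$, where one must simultaneously exploit the compatibility constraint $\tilde{y}^n_0 = y^n_{s^n}$, the continuity of $\tilde{y}$ at $0$ (and of $y$ at $s$), and the convergence $y^n_{s^n} \to y_s$ (which itself must be extracted from the stopped-path convergence, using $y^n_{s \wedge s^n} \to y_{s \wedge s} = y_s$ together with continuity of $y$). Once this small-scale argument is in place, patching the three regions is routine. The case $s=+\infty$ only needs the stopped-path convergence, and the case $s<+\infty$ is a careful triangle inequality; no additional structure on $E$ beyond that it is Polish is required.
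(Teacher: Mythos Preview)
Your argument is correct and follows essentially the same route as the paper: a case analysis according to whether $s'$ lies below $\min(s,s^n)$, between $\min(s,s^n)$ and $\max(s,s^n)$, or above $\max(s,s^n)$, with the middle (transition) region handled via the compatibility $\tilde y_0=y_s$, continuity, and the stopped-path convergence. One small expository slip: on $I_2$ you write $Ty(s')=y_{s'}$, which is the sub-case $s^n<s$; when $s<s^n$ one has instead $Ty(s')=\tilde y_{s'-s}$, but this is treated by the exact symmetric argument you already gave for $Ty^n$.
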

\begin{proof}
Let $d$ stand for the distance on $E$. Denote by $(y^n_{[0,s^n]})$ a sequence of paths in $C_{\rm stop}(\R_+,E)$ converging to $y^\infty_{[0,s^\infty]}$
(for the topology given in Definition~\ref{def:path_topo}), 
and $(\tilde{y}^n)$  a sequence of paths in $C(E,\R_+)$ converging to $\tilde{y}^\infty$
(uniformly on compact sets). We assume that $y^n_{s^n}=\tilde{y}^n_0$ for all $n$, as well as $y^\infty_{s^\infty}=\tilde y^\infty_0$.
We have to prove the convergence of the extended function sequence $(Ty^n)$ to $Ty^\infty$.
Note that for any $s$, one can control $d(Ty^\infty_s,Ty^n_s)$ by considering all cases:
\begin{itemize}
\item If $s\le s^\infty\wedge  s^n$, 
\begin{align*}d(Ty^\infty_s,Ty^n_s)\le d(y^\infty_{s\wedge s^\infty},y^n_{s\wedge s^n}).\end{align*}

\item If $s\ge s^\infty\vee s^n$, 
\begin{align*}d(Ty^\infty_s,Ty^n_s)
\le  d(\tilde y^\infty_{s-s^\infty},\tilde y^n_{s-s^n})
\le  d(\tilde y^\infty_{s-s^\infty},\tilde y^\infty_{s-s^n})+ d(\tilde y^n_{s-s^n},\tilde y^\infty_{s-s^n}).\end{align*}

\item If $s^n\le s\le s^\infty$, 
\begin{align*}
d(Ty^\infty_s,Ty^n_s)&= d( y^\infty_s,\tilde y^n_{s-s^n}) \\
& \le d( y^\infty_{s},\tilde y^\infty_{s -  s^n})+d(\tilde y^n_{s-s^n},\tilde y^\infty_{s-s^n}) \\
& \le d( y^\infty_{s}, y^\infty_{s^\infty}) + d( \tilde y^\infty_{0},\tilde y^\infty_{s -  s^n})+d(\tilde y^n_{s-s^n},\tilde y^\infty_{s-s^n}). 
\end{align*}

\item If $s^\infty\le s\le s^n$, 
\begin{align*}
d(Ty^\infty_s,Ty^n_s)= d(\tilde y^\infty_{s-s^\infty},y^n_s)
\le d( y^\infty_{s\wedge s^\infty},y^n_{s\wedge s^n})+d(\tilde y^\infty_{s-s^\infty},\tilde y^\infty_0).
\end{align*}
\end{itemize}

Let $s_0 \geq 0$ be given. The convergence assumptions, together with the uniform continuity of $Ty^\infty$ on compacts, imply that, when $n$ goes to infinity, all the right hand sides converge uniformly  to $0$ with respect to $s \in [0,s_0]$. Hence the result. 
\end{proof}

We can then safely prove the following pathwise continuity of stopped Feller processes.

\begin{Lem}\label{lem:time_fin_path} Let $ \lim_n y^n_{[0,s^n]} = y_{[0,s]}$ denote a converging sequence of initial path conditions in $C_{\rm stop}(\R_+,E)$. Let $B \subset E$ denote a Borel set and $Y$ a continuous Feller process. Assume that
$$\P_{y_s} \p{S_{\bar B} = S_{\mathring B}} =1.$$
  Then the distribution of $(Y_{0 \leq s \leq S_B})$ in $C_{\rm stop}(\R_+,E)$  under 
  $\P(\quad | Y_{[0,s^n]}=y^n_{[0,s^n]})$ converges when $n \to +\infty$ towards its distribution under $\P(\quad | Y_{[0,s]}=y_{[0,s]})$.
\end{Lem}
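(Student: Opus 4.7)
The plan is to reduce the desired convergence in distribution to an almost sure statement, combining a Skorokhod representation with the pathwise continuity of Feller processes (Lemma~\ref{lem:cont_feller}), the semi-continuity of hitting times (Lemma~\ref{lem:stop_time_cont}), and the continuity of the path extension (Lemma~\ref{lem:map_ext}). The crucial observation is that, by the strong Markov property, the regular conditional distribution of $Y_{[0,S_B]}$ given the prefix $Y_{[0,s^n]} = y^n_{[0,s^n]}$ is the law of the concatenation of this deterministic prefix with an independent random continuation $\widetilde Y^n$, itself a Feller process started at $y^n_{s^n}$ and stopped at its first entrance into $B$.

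First, I would apply Lemma~\ref{lem:cont_feller} together with a Skorokhod representation argument to realize, on a single probability space, Feller processes $\widetilde Y^n$ and $\widetilde Y$ with respective initial conditions $y^n_{s^n}$ and $y_s$, such that $\widetilde Y^n \to \widetilde Y$ almost surely in $C(\R_+,E)$. Writing $\widetilde S^n_B \eqdef \inf\{h \geq 0,\ \widetilde Y^n_h \in B\}$ and similarly $\widetilde S_B$, and using the trivial inclusions $s_{\bar B} \leq \widetilde S^n_B \leq s_{\mathring B}$, Lemma~\ref{lem:stop_time_cont} combined with the hypothesis $\P_{y_s}(S_{\bar B} = S_{\mathring B})=1$ yields
\begin{align*}
  s_{\bar B}(\widetilde Y)
  &\leq \liminf_n s_{\bar B}(\widetilde Y^n)
  \leq \liminf_n \widetilde S^n_B
  \leq \limsup_n \widetilde S^n_B \\
  &\leq \limsup_n s_{\mathring B}(\widetilde Y^n)
  \leq s_{\mathring B}(\widetilde Y)
  = s_{\bar B}(\widetilde Y) \quad \text{a.s.,}
\end{align*}
so $\widetilde S^n_B \to \widetilde S_B$ almost surely in $\overbar{\R}_+$. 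Combined with the uniform convergence of $\widetilde Y^n$ on compacts, this forces almost sure convergence of the stopped continuations $\widetilde Y^n_{[0,\widetilde S^n_B]}$ to $\widetilde Y_{[0,\widetilde S_B]}$ in $C_{\rm stop}(\R_+,E)$.

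To conclude, I would assemble the full stopped path. Provided the limit prefix $y_{[0,s]}$ does not itself enter $B$ (the only nontrivial case, since if the prefix already meets $B$ before time $s$ the whole stopped path is determined by the prefix and depends continuously on it), $Y_{[0,S_B]}$ conditional on the prefix is precisely the concatenation of the prefix with the stopped continuation, and Lemma~\ref{lem:map_ext} provides continuity of this extension operation. Hence the joint convergences $y^n_{[0,s^n]} \to y_{[0,s]}$ in $C_{\rm stop}(\R_+,E)$ and $\widetilde Y^n_{[0,\widetilde S^n_B]} \to \widetilde Y_{[0,\widetilde S_B]}$ in $C_{\rm stop}(\R_+,E)$ yield almost sure convergence of the concatenated stopped path in $C_{\rm stop}(\R_+,E)$, and thus convergence in distribution.

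I expect the main obstacle to lie in the careful bookkeeping around the hitting times: one must include the case $\widetilde S_B = +\infty$ (dispatched by the lower semi-continuity of $s_{\bar B}$ applied to the convergent sequence) and verify uniform control in the $C_{\rm stop}$-topology on compacts. A secondary technical subtlety is that Lemma~\ref{lem:map_ext} is stated for extending a stopped path by a non-stopped one; this is circumvented by first extending the stopped continuation to a full path in $C(\R_+,E)$ (for instance by freezing it at $\widetilde Y^n_{\widetilde S^n_B}$ after time $\widetilde S^n_B$), applying Lemma~\ref{lem:map_ext} to obtain a continuous concatenation, and then re-stopping at the total hitting time $s^n + \widetilde S^n_B$.
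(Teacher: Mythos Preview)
Your proposal is correct and uses the same ingredients as the paper's proof: a Skorokhod representation via Lemma~\ref{lem:cont_feller}, the semi-continuity of hitting times from Lemma~\ref{lem:stop_time_cont}, and the continuity of path extension from Lemma~\ref{lem:map_ext}. The only difference is the order of operations. The paper first concatenates the prefix with the (unstopped) Feller continuation via Lemma~\ref{lem:map_ext}, obtaining full paths $Y^n \to Y$ a.s.\ in $C(\R_+,E)$, and then applies Lemma~\ref{lem:stop_time_cont} to deduce $S_B^n \to S_B$ and hence convergence of the stopped paths in $C_{\rm stop}$. You instead stop the continuations first and concatenate afterwards. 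The paper's order is marginally cleaner because Lemma~\ref{lem:map_ext} applies directly (unstopped continuation), avoiding the freeze-and-restop workaround you correctly identified; on the other hand, your order makes the role of the hypothesis $\P_{y_s}(S_{\bar B}=S_{\mathring B})=1$ more transparent, since it is applied precisely to the continuation started at $y_s$.
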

\begin{Rem}
Recall that, by Definition~\ref{def:path_topo}, if  $S_B=+\infty$, then $(Y_{0 \leq s \leq S_B})$ is actually $(Y_{0 \leq s <+\infty})$. 
\end{Rem}
\begin{proof} It is an extension of the proof of Lemma~\ref{lem:time_fin} using Lemma~\ref{lem:map_ext}. 
Indeed, the latter and a Skorokhod embedding argument allow us to construct a sequence $Y^n$ converging almost surely to $Y$ 
in $C(E,\R_+)$ such that (i) $Y^n_{s'}=y^n_{s'}$ for $s' \leq s^n$ and $Y_{s'}=y_{s'}$ for $s' \leq s$; 
(ii) all processes are distributed according to $Y$ with initial conditions prescribed by~(i). 
Then Lemma~\ref{lem:stop_time_cont} with \eqref{eq:time_cond} implies that $\lim_n S^n_{B} = S_B$ almost surely, 
hence the result.
\end{proof}

\section{Proof of Assumptions~\ref{ass:feller}, \ref{ass:up}, and \ref{ass:minor} for diffusions in $\R^d$}\label{sec:proof_diff}

We can now establish Lemma~\ref{lem:proof_diff}, by checking successively that Assumptions~\ref{ass:feller}, \ref{ass:up}, and \ref{ass:minor} hold true under the conditions~(a),~(b) and~(c) stated in Lemma~\ref{lem:proof_diff}. \medskip

{\sc Step~1:}  Assumption~\ref{ass:feller} holds true. \medskip

Indeed, Condition~(a) implies that the diffusion is a strong solution of the SDE~\eqref{eq.diffusion.X} and is Feller, see for example \cite{ethierkurtz} Th.2.4 page 373. \medskip

{\sc Step~2:} Assumption~\ref{ass:up} holds true. \medskip

By definition of the stopping times $S_t$, condition \eqref{eq:up}~of Assumption~\ref{ass:up} will follow from
\begin{align}\label{ppp}
\P_y\Big(\varlimsup_{s \downarrow 0}s^{-\frac12}(\xi(Y_s)-\xi(y))=+\infty\Big)=1
\end{align}
for any $y \in \set{0 \leq \xi \leq 1}$. On the other hand, recalling that $A \eqdef \set{\xi \leq -1}$, condition \eqref{eq:down}~of Assumption~\ref{ass:up} follows similarly from the strong Markov property for Feller processes and from the fact that, for any $y \in \set{\xi = -1}$,
\begin{align}\label{qqq}
\P_y\Big(\varliminf_{s \downarrow 0}s^{-\frac12}(\xi(Y_s)-\xi(y))=-\infty\Big)=1.
\end{align}
We claim that both~\eqref{ppp} and~\eqref{qqq} hold true for any $y \in \set{-1 \leq \xi \leq 1}$, which will conclude the proof of Step~$2$. \medskip 

Since $\xi$ is $C^2$, It\^o's formula gives
 \begin{align*}
\d \xi(Y_s) 
&= \Big(\nabla\xi(Y_s)^Tb(Y_s)+\tfrac12Tr(\nabla^2\xi(Y_s)a(Y_s))\Big)\d s
+\nabla\xi(Y_s)^T\sigma(Y_s)\d W_s,
\end{align*}
so that if we denote by $U_s \d s$ the finite variation part of this decomposition, and 
set 
$$\Sigma_s=\sqrt{\big((\nabla \xi)^T  a \nabla \xi\big)(Y_s)}\geq \sqrt{\delta} > 0,$$
then
for the one dimensional Brownian motion $\d \widetilde{W}_s = \Sigma_s^{-1} \nabla\xi(Y_s)^T\sigma(Y_s)\d W_s$, it comes
 \begin{align}\label{eq:diff_lev}
\d \xi(Y_s)&=U_s\d s+\Sigma_s\d \widetilde{W}_s.
\end{align} 
Since $s \mapsto U_s$ is continuous, it remains to prove that
\begin{align}\label{pppp}
\P_y\Big(\varliminf_{s \downarrow 0}s^{-\frac12}\int_0^{s}\Sigma_{r}\d \widetilde{W}_r=-\infty\Big) = \P_y\Big(\varlimsup_{s \downarrow 0}s^{-\frac12}\int_0^{s}\Sigma_r\d \widetilde{W}_r=+\infty\Big)=1.
\end{align}
For this, remark that the process 
\begin{align}\label{eq:time_BM}
s \mapsto  B_{\int_0^s \Sigma_r^2 \d r} \eqdef \int_0^s\Sigma_r \d \widetilde{W}_r
\end{align}
is a time-changed Brownian motion $B$ (Chapter~V, Section~$1$ of~\cite{revuzyor}). The law of the iterated logarithm for the Brownian motion (Chapter~II, Section~$1$ of \cite{revuzyor}) now implies (\ref{pppp}) since almost surely we have
\begin{align*}
\varlimsup_{s\rightarrow 0}s^{-\frac12}\int_0^{s}\Sigma_r\d \widetilde{W}_r
&=\varlimsup_{s\rightarrow 0}\Big(\frac1s\int_0^s\Sigma_r^2 \d r\Big)^{\frac12}
\Big(\int_0^s\Sigma_r^2 \d r\Big)^{-\frac12}B_{\int_0^s\Sigma_r^2 \d r}\\
&=\Sigma_0\varlimsup_{s\rightarrow 0}
\Big(\int_0^s\Sigma_r^2 \d r\Big)^{-\frac12}B_{\int_0^s\Sigma_r^2 \d r}\\
&=+\infty.
\end{align*}
The same reasoning applies for the other limit in (\ref{pppp}).\medskip

{\sc Step~$3$}: Assumption~\ref{ass:minor} holds true.

Consider the differential equation~\eqref{eq:diff_lev} above, and recall that Condition~(a) on the coefficients implies that $U_s$ is bounded, while the positive lower bound in Condition~(c) implies that $\Sigma_s > 0$ is bounded from above and from below. \medskip

We then consider the continuous process $s \mapsto Z_s$ defined by $Z_0 = \xi(Y_0)$ and 
$$
d Z_s := - \lambda_{0} \Sigma^{2}_s \, d s + \Sigma_s \, d \widetilde{W}_s,
$$
where $\lambda_0$ is such that 
$$
U_s \Sigma_s^{-2} \leq \lambda_{0} 
$$
almost surely and for all $s \geq 0$. By construction, (i) the process $s \mapsto Z_s - \xi(Y_s)$ is decreasing and thus negative, and (ii) $s \mapsto Z_s$ is the time-changed Brownian motion~\eqref{eq:time_BM}, but drifted with constant drift $-\lambda_0$, that is 
$$
\widetilde{Z}_{l=\int_0^s \Sigma_r^2} := Z_s,
$$
satisfies $d \widetilde{Z}_l = -  \lambda_{0} d l + d B_l$. We will denote by $S^Z_{ \pm 1}$ the first hitting time of $\pm 1$ by $Z$, and $L^{\tilde Z}_{\pm 1}$ the first hitting time of $ \pm 1$ of $\tilde{Z}$ so that
$$
\int_0^{S^Z_{ \pm 1}} \Sigma_r^2 \, d r = L^{\tilde Z}_{\pm 1}.
$$

Consider also the stopping time $\sigma$ defined by
$$
\int_0^\sigma \Sigma_r^2 \, d r =1 .
$$
Notice that $\sigma\leq 1/\delta$ almost surely. Now, let us first prove that
$$p_0 := \sup_{y \in \set{-1 \leq \xi \leq 1} }\P_y \p{S_1=S_{-1}=+\infty} = 0 .$$
Conditioning and applying the strong Markov property yields
\begin{align*}
\P_y \p{S_1=S_{-1}=+\infty} & = \E_y \b{ \E_y[\un_{S_1=S_{-1}=+\infty} | {\cal F}^Y_{\sigma}] \un_{S_1 \wedge S_{-1} > \sigma} } \\
& = \E_y \b{ \E[\un_{S_1=S_{-1}=+\infty} |Y_{\sigma}] \un_{S_1 \wedge S_{-1} > \sigma} } \\
& \leq p_0 \P_y \p{S_1 \wedge S_{-1} > \sigma }  \leq p_0 \P_y \p{S_1 > \sigma }.
\end{align*}
Since $\xi(Y_s) \geq Z_s $ we have that  $S_1 \leq S^Z_1$ so that 
$$ \P_y(S_1 > \sigma ) \leq \P_{y}(S^Z_1 > \sigma ) = 
\P\left(\int_0^{S_1^Z} \Sigma_r^2 dr>\int_0^\sigma \Sigma_r^2 dr\right)
=\P_{y}(L^{\tilde Z}_1 > 1). $$
Since $\tilde{Z}$ starting from $0$ is stochastically smaller than $\tilde{Z}$ starting from $\xi(y) \geq 0$, it yields $\P_{y}(L^{\tilde Z}_1 > 1) \leq \P(L^{\tilde Z}_1 > 1 | \tilde Z = 0) < 1 $, so that 
$$
p_0 \leq p_0 \times \P(L^{\tilde Z}_1 > 1 | \tilde Z = 0),
$$
which shows that $p_0=0$.\medskip

Finally, let us  prove that 
$$p_1 := \inf_{y \in \set{ \xi = 0} }\P_y \p{S_1< S_{-1}} > 0 .$$

Obviously, since $Z_s \leq \xi(Y_s)$ and $\xi(y) = 0$,  
$$\P_y(S_1< S_{-1}) \geq \P_y(S^Z_1 < S^Z_{-1})
=  \P(L^{\tilde Z}_1 < L^{\tilde Z}_{-1} | \tilde Z = 0) > 0, $$
the last term being independent of the choice of $y \in \set{\xi = 0}$.

\section{Remarks on the main result of~\cite{cdgr2}}\label{sec:cdgr2}
Let us now explain the connection between Assumptions~\ref{ass:H1} and~\ref{ass:H2}, and the set of assumptions in~\cite{cdgr2}. Theorem~\ref{th:cdgr2} of the present paper corresponds exactly to Proposition~3.3 and Proposition 3.13 in~\cite{cdgr2} where they are established under Assumption~\ref{ass:H2}, also called Assumption~(B) in~\cite{cdgr2}, as well as a weaker variant of Assumption~\ref{ass:H1}, called Assumption~\ref{ass:J}, and recalled below.

\begin{Def}
First, let us fix a measurable bounded function $\ph: F \to \R$, and denote for each $1 \leq n \leq N$ and any $t\in[0,1]$,
\begin{align*}
&\L^n_t \eqdef Q^{1-t}(\ph)(X_{t}^n) = q(\ph)(X_t^n),
\end{align*}
where $\ph$ is omitted in order to lighten the notation. For any $n \in \set{1, \ldots , N}$ and any $k \geq 0$, we denote by $\tau_{n,k}$
the $k$-th branching time of particle $n$, with the convention $\tau_{n,0} =0$. Moreover, for any $j \geq 0$, we denote by $\tau_{j}$ the $j$-th branching time of the whole system of particles, with the convention $\tau_{0} =0$.
\end{Def}

\begin{Rem}
The identity in the definition of $\L^n_t$ comes from Lemma~\ref{lem:dirichlet_op}.
\end{Rem}


A key assumption on the Fleming-Viot particle system in~\cite{cdgr2} is the following.
\renewcommand{\theAssN}{(A')}
\begin{AssN}\label{ass:J}  We assume that  the particle system is such that for the bounded test function $\ph$, $t \mapsto \L^n_t$ is c\`adl\`ag for each $1 \leq n \leq N$, and:
\begin{itemize}
\item[(i)] Only one particle is killed at each branching time:  if $n\neq m$, then $\tau_{n,k}\neq\tau_{m,j}$ almost surely for any $j,k \geq 1$. In other words, the particle system is well-defined.
\item[(ii)] The processes $\L_t^n$ and $\L_t^m$ don't jump at the same time: if $n\neq m$, then
$$\P( \exists t \geq 0, \, \Delta \L^m_t \neq 0\ \&\ \Delta \L^n_t \neq 0)= 0.$$
\item[(iii)] The process $\L_t^n$ never jumps at a branching time of another particle: if $n\neq m$, then
$$ \P( \exists  j \geq 0, \, \Delta \L^n_{\tau_{m,j}} \neq 0)= 0.$$
\end{itemize}
\end{AssN}

In order to obtain precisely Theorem~\ref{th:cdgr2}, it remains to show that Assumption~\ref{ass:H1} implies Assumption~\ref{ass:J}, that is
$$ (\tilde A) \Rightarrow (A').$$

In fact, this can be proven using exactly the same arguments as the ones used to prove Lemma~3.1 in \cite{cdgr2}. In the latter, it is shown that a slightly stronger but very similar assumption (denoted there Assumption~(A), and not specific to the AMS context) implies Assumption~\ref{ass:J},that is
$$ (A) \Rightarrow (A'). $$
However, the proof of $(\tilde A) \Rightarrow (A')$ is very similar to the one of $ (A) \Rightarrow (A')$, so we will not go into more details on this point. In summary, it can be checked, following the arguments of the proof of Lemma~3.1 in \cite{cdgr2}, the chain of implications $$ (A) \Rightarrow (\tilde A) \Rightarrow (A').$$

\section{Assumptions~\ref{ass:feller}, \ref{ass:up}, \ref{ass:minor} imply \ref{ass:H1}  and~\ref{ass:H2}}\label{sec:check_ass}

\begin{Lem}\label{lem:jump_dens}
Under Assumptions~\ref{ass:feller} and~\ref{ass:up}, for any $y \in F=\set{0 \leq \xi \leq 1}$ and any $t \in ]0,1]$ satisfying $\xi(y) \leq t$, one has $\P_y(S_{t} = S_{t^-})=1$, meaning that the jump times of $t \mapsto S_t$ have an atomless distribution.
\end{Lem}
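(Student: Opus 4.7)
The plan is to prove the two inequalities $S_{t^-} \leq S_t$ and $S_t \leq S_{t^-}$ separately; the first is immediate from monotonicity, and the second uses the strong Markov property together with Assumption~\ref{ass:up}. First I would note that $s \mapsto S_s$ is non-decreasing, so the left limit $S_{t^-} := \sup_{s < t} S_s$ exists in $[0,+\infty]$ and satisfies $S_{t^-} \leq S_t$. Picking any rational sequence $s_n \uparrow t$ with $s_n \geq \xi(y)$, one has $S_{t^-} = \lim_n S_{s_n}$; each $S_{s_n}$ is a stopping time of the right-continuous Feller filtration $\calF^Y$ as the first entrance time into the open set $\{\xi > s_n\}$, so $S_{t^-}$ is an increasing countable limit of stopping times, hence itself a stopping time.

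Next I would identify $\xi(Y_{S_{t^-}})$ on the event $\{S_{t^-} < +\infty\}$ (there is nothing to prove on the complementary event since $S_t \geq S_{t^-}$). By equation~(\ref{aeicvazich}) applied to each $s_n$ one has $\xi(Y_{S_{s_n}}) = s_n$, and pathwise continuity of $\xi \circ Y$ along the almost surely converging sequence $S_{s_n} \uparrow S_{t^-}$ yields $\xi(Y_{S_{t^-}}) = t$. Then I would apply the strong Markov property of the Feller process $Y$ at the stopping time $\tau := S_{t^-}$: conditionally on $\calF^Y_\tau$, the shifted process $(Y_{\tau+h})_{h \geq 0}$ has the law of $Y$ started from the point $Y_\tau \in \{\xi = t\}$. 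Equation~(\ref{eq:up}) of Assumption~\ref{ass:up} applied at this starting point gives $\inf\{h \geq 0 : \xi(Y_{\tau+h}) > t\} = 0$ almost surely, and therefore $S_t \leq \tau = S_{t^-}$ almost surely on $\{\tau < +\infty\}$.

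The only mild technicality, which I expect to be the main thing to justify carefully, is that the strong Markov application is legitimate at $\tau = S_{t^-}$; this is handled by the right-continuity of $\calF^Y$ guaranteed by the Feller property, together with the representation of $S_{t^-}$ as an increasing countable limit of hitting times of open sets, and by the fact that $\xi(Y_\tau) = t$ on $\{\tau < +\infty\}$, which is exactly what allows~(\ref{eq:up}) to be invoked on the shifted process.
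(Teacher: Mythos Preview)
Your proof is correct and follows essentially the same approach as the paper's: represent $S_{t^-}$ as an increasing limit of stopping times, then apply the strong Markov property at $S_{t^-}$ together with~\eqref{eq:up}. You are in fact more explicit than the paper on the key point that $\xi(Y_{S_{t^-}})=t$ on $\{S_{t^-}<+\infty\}$ (via~\eqref{aeicvazich} and continuity of $\xi\circ Y$), which the paper uses without spelling out.
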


\begin{proof} 
Let us recall that 
$$S_t \eqdef \inf \{ s \geq 0,\  \xi(Y_s) > t \}=\inf \{ s \geq 0,\  \xi(Y_s) = t \}$$
is a stopping time with respect to the natural filtration of $Y$ for all $t\in[0,1]$, and that by continuity of $(Y,\xi)$, the process $t \mapsto S_t$ is c\`adl\`ag. By construction, for $t > 0$,  $S_{t^-}$ is the supremum of the increasing sequence of stopping times $(S_{t-1/k})_{k \geq 1}$, and thus is itself a stopping time. \medskip

%
  $Y$ is Feller by Assumption~\ref{ass:feller}, so in particular it is strong Markov: for the stopping time $S_{t^-}$, this gives
  $$ \P_y(S_t = S_{t^-} )= \P_y \p{S_t = S_{t^-} = + \infty} + \E_y \b{\un_{S_{t^-} < + \infty} \P_{Y_{S_{t^-}}}\p{S_t = S_{t^-}}}.$$
But  \eqref{eq:up} in Assumption~\ref{ass:up} directly implies that $\P_{Y_{S_{t^-}}}\p{S_t = S_{t^-}= 0}=1$ almost surely, so that $\P_y(S_t = S_{t^-} )=1$.\end{proof}

\begin{Lem}\label{lem:H1i} 
  Assumptions~\ref{ass:feller} and~\ref{ass:up} imply Assumption~\ref{ass:H1}(i). In other words, for any $h  \geq 0 $ and any initial condition $X_0=x \in F$, one has $X_h = X_{h^-}$ almost surely.
\end{Lem}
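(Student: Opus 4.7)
I would fix $h \geq 0$ and $x \in F$ and set $t_0 := (\xi(x)+h)\wedge 1$. By definition $X_h$ is either $Y_{S_{t_0}}$ (when $S_{t_0}<S_A$) or $\partial$, and I would read off $X_{h^-}$ as a left limit of $t \mapsto Y_{S_t}$ at $t_0$. When $(\xi(x)+h')\wedge 1$ is constantly equal to $t_0$ for $h'$ in a left neighbourhood of $h$ (which occurs in particular when $h=0$), the identity $X_{h^-}=X_h$ is trivial, so the plan reduces to the case $t' := (\xi(x)+h')\wedge 1 \uparrow t_0$ strictly, which forces $t_0 > 0$ since $\xi(x) \geq 0$.

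From the monotonicity of $t\mapsto S_t$, $S_{t'} \uparrow S_{t_0^-}$ as $t'\uparrow t_0$; combined with the continuity of $Y$ and the fact that $Y$ is stopped at $S_A \wedge S_1$, this yields
\[
X_{h^-} =
\begin{cases}
Y_{S_{t_0^-}} & \text{if } S_{t'}<S_A \text{ for every } t' < t_0, \\
\partial & \text{otherwise}.
\end{cases}
\]
The central ingredient is then Lemma~\ref{lem:jump_dens}, which under Assumptions~\ref{ass:feller}--\ref{ass:up} provides $\P_x(S_{t_0}=S_{t_0^-})=1$. On that full-measure event three of the four possible alive/killed configurations for the pair $(X_h,X_{h^-})$ are immediate: both alive gives $X_h = Y_{S_{t_0}} = Y_{S_{t_0^-}} = X_{h^-}$, both killed gives $X_h = X_{h^-} = \partial$, and the case ``$X_h$ alive, $X_{h^-}$ killed'' is excluded by the monotonicity of $t \mapsto S_t$, which makes killing irreversible.

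The genuinely delicate configuration is ``$X_h = \partial$ while $X_{h^-} \neq \partial$''. There the alive-hypothesis on the left yields $S_{t_0^-} \leq S_A$ while the killed-hypothesis on the right yields $S_{t_0} \geq S_A$; together with the a.s.~identity $S_{t_0} = S_{t_0^-}$ they force $S_{t_0} = S_A$, necessarily finite. Then by continuity of $Y$ and $\xi$, one gets $\xi(Y_{S_A}) = \xi(Y_{S_{t_0}}) = t_0 > 0$. However $Y_{S_A} \in \bar A$, and since $\xi$ is continuous and $A\subset \set{\xi<0}$ by Assumption~\ref{ass:feller}, one has $\bar A \subset \overline{\set{\xi<0}} \subset \set{\xi \leq 0}$, so $\xi(Y_{S_A}) \leq 0$. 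The contradiction shows that the problematic configuration has probability zero, which completes the proof. The hard part is precisely this killing/crossing scenario at level $t_0$, and the topological separation $A\subset \set{\xi<0}$ from Assumption~\ref{ass:feller} is exactly what rules it out at strictly positive levels.
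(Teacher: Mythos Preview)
Your proof is correct and rests on the same key ingredient as the paper's, namely Lemma~\ref{lem:jump_dens} giving $\P_x(S_{t_0}=S_{t_0^-})=1$. The paper's own proof is a two-line argument: it writes $X_h=Y_{S_{t_h}}$ under the convention $Y_{+\infty}=\partial$ and then simply invokes continuity of $h\mapsto t_h$ and $s\mapsto Y_s$ together with $S_{t_h}=S_{t_h^-}$. Your treatment is more explicit: you separate out the four alive/killed configurations for the pair $(X_h,X_{h^-})$ and, in particular, you rule out the borderline case $X_h=\partial$, $X_{h^-}\neq\partial$ by using the topological separation $A\subset\{\xi<0\}$ from Assumption~\ref{ass:feller}. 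This extra care is not wasted; the paper's convention $Y_{+\infty}=\partial$ does not by itself cover the situation $S_{t_0}=S_A<+\infty$, so your argument genuinely fills in what the paper's short proof leaves implicit.
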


\begin{proof}
Consider the level $t_h := 1 \wedge (\xi(x)+h)$. Then $X_{h}= Y_{S_{t_h}}$ where $Y_0=x$, with the convention $Y_{+\infty} = \partial$. Since $h \mapsto t_h$ and $s \mapsto Y_s$ are almost surely continuous, the result is then a consequence of the fact that $S_{t_h} = S_{t_{h}^-}$, which is precisely the result of Lemma~\ref{lem:jump_dens}.
\end{proof}

\begin{Lem}\label{lem:H1ii}
Assumptions~\ref{ass:feller} and~\ref{ass:up} imply Assumption~\ref{ass:H1}(ii), that is to say if $\ph: \set{ \xi = 1} \to \R$ is continuous and bounded, then the integral operator
 $$
 y \mapsto q(\ph)(y) \eqdef \E_y \b{\ph(Y_{S_1}) \un_{S_1 < S_A}}
 $$
 is continuous on the set $\set{0 \leq \xi \leq 1}$.
\end{Lem}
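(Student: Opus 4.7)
The plan is to rewrite $q(\ph)(y)$ as the expectation of a continuous bounded functional of the joint exit pair $(S_B, Y_{S_B})$, where $B \eqdef \bar{A} \cup \set{\xi \geq 1}$ is closed by continuity of $\xi$, and then invoke Lemma~\ref{lem:time_fin} to deduce continuity in $y$. To apply that lemma I must first verify $\P_y(S_{\bar B} = S_{\mathring B}) = 1$ for every $y \in \set{0 \leq \xi \leq 1}$. The $\bar{A}$ component gives this directly via~\eqref{eq:downbis}. For the other piece, since $\set{\xi > 1}$ is open and contained in $\set{\xi \geq 1}$, it suffices to show $S_{\set{\xi > 1}} = S_{\set{\xi \geq 1}}$ almost surely; this follows from the strong Markov property applied at the stopping time $S_{\set{\xi \geq 1}}$, whose landing point lies in $\set{\xi = 1}$ by continuity, combined with condition~\eqref{eq:up}. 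Taking the minimum yields $S_{\bar B} = S_{\mathring B} = S_A \wedge S_1$, which is finite almost surely by Assumption~\ref{ass:minor}.

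Next, I build a bounded continuous function $\tilde\psi : E \to \R$ satisfying $\tilde\psi \equiv \ph$ on $\set{\xi = 1}$ and $\tilde\psi \equiv 0$ on $\bar{A}$. Both are closed sets in $E$, and they are disjoint because Assumption~\ref{ass:feller} gives $A \subset \set{\xi < 0}$, whence $\bar{A} \subset \set{\xi \leq 0}$. Since $E$ is locally compact Polish, hence metrizable and normal, the Tietze extension theorem provides such a $\tilde\psi$. A straightforward case analysis then shows that $\tilde\psi(Y_{S_B}) = \ph(Y_{S_1}) \one_{S_1 < S_A}$ almost surely: on $\set{S_1 < S_A}$ one has $Y_{S_B} = Y_{S_1} \in \set{\xi = 1}$ by continuity of $\xi$ and $Y$, while on $\set{S_A \leq S_1}$ one has $Y_{S_B} = Y_{S_A} \in \bar{A}$ where $\tilde\psi$ vanishes. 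Hence $q(\ph)(y) = \E_y\b{\tilde\psi(Y_{S_B})}$.

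Finally, since $\P_y(S_B < +\infty) = 1$ for every $y \in \set{0 \leq \xi \leq 1}$, the conditional and unconditional laws of $(S_B, Y_{S_B})$ coincide, and the second part of Lemma~\ref{lem:time_fin} applied to any sequence $y^n \to y$ in $\set{0 \leq \xi \leq 1}$ yields $\E_{y^n}\b{\tilde\psi(Y_{S_B})} \to \E_y\b{\tilde\psi(Y_{S_B})}$, that is $q(\ph)(y^n) \to q(\ph)(y)$. The delicate step to watch is the identification $S_{\bar B} = S_{\mathring B}$ for the heterogeneous union $B$: promoting a landing on $\set{\xi = 1}$ to an instantaneous excursion into $\set{\xi > 1}$ is precisely what~\eqref{eq:up} provides, but the strong Markov property of the Feller process $Y$ is needed to deploy it at the random stopping time $S_{\set{\xi \geq 1}}$.
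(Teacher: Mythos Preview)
Your proof is correct and follows essentially the same route as the paper: both reduce to Lemma~\ref{lem:time_fin} applied to the set $B$ (the paper takes $B = A \cup \{\xi > 1\}$, you take its closure), verifying $S_{\bar B} = S_{\mathring B}$ via~\eqref{eq:up} and~\eqref{eq:downbis}. Your use of Tietze to build a globally continuous $\tilde\psi$ is more explicit than the paper's terse formula $\ph(Y_{S_B})\un_{\xi(Y_{S_B})\geq 1}$, which tacitly relies on the support of $Y_{S_B}$ lying in the disjoint closed sets $\{\xi=1\}\cup\bar A$; both work. One cosmetic point: the lemma is stated under Assumptions~\ref{ass:feller} and~\ref{ass:up} only, so your appeal to Assumption~\ref{ass:minor} for $S_B<\infty$ is slightly out of place---the finiteness is really the standing convention~\eqref{eq:stop_finite}, and the paper sidesteps even that by keeping the indicator $\un_{S_B<\infty}$ and using both parts of Lemma~\ref{lem:time_fin}.
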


\begin{proof} Consider Lemma~\ref{lem:time_fin}. Letting $B := A \cup \set{ \xi > 1}$, we may write
$$q(\ph)(y)=\E_y \left[\ph(Y_{S_B})\un_{\xi(Y_{S_B})\ge 1} \un_{S_B < \infty}\right].$$ 
The result is now a direct consequence of Lemma~\ref{lem:time_fin}, 
because Assumption~\ref{ass:up} guarantees that  $S_{\{\xi\ge 1\}} = S_{\{\xi>1\}}$ and $S_{\bar A} = S_{\mathring A}$. Since ${\bar B}\subset{\bar A}\cup\{\xi\ge 1\}$, we deduce that $S_{\bar B} = S_{\mathring B}$.
\end{proof}

\begin{Lem}\label{lem:H2} Assumption~\ref{ass:minor} implies Assumption~\ref{ass:H2}, meaning that the number of branchings on the time interval $[0,1]$ is almost surely finite.
\end{Lem}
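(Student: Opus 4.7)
The plan is to exhibit $J_1$ as the number of failures in a sequence of Bernoulli-type trials whose conditional success probabilities admit a uniform lower bound, and then dominate this count by a negative binomial random variable with finite mean. This is precisely the ``comparison with a geometric random sequence'' announced at the end of Section~\ref{oliach}.

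First I would recast the AMS algorithm as a sequence of trajectory simulations, enumerating trajectories in the order in which they are simulated: the $N$ initial ones first, then one new trajectory per iteration, the $j$-th of which is started at the resampled state $X_{\tau_j}^{M_j}\in\{\xi=\tau_j\}$. Each trajectory is labelled a ``success'' if it reaches level $1$ before $A$, and a ``failure'' otherwise. Since every iteration kills exactly one currently-failing trajectory and replaces it by one newly simulated trajectory, and since the stopping criterion $\tau_j=1$ is equivalent to all $N$ currently-active trajectories being successes, a simple accounting shows that at termination the total number of simulated trajectories is $N+J_1$ and contains exactly $N$ successes. In particular, $J_1$ coincides with the total number of failed trajectories.

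Second, I would bound from below, uniformly with respect to the past, the conditional success probability of each simulated trajectory. Set $p_{\min}\eqdef\inf_{y\in\{\xi=0\}}\P_y(S_1<S_A)>0$ as in Assumption~\ref{ass:minor}. The strong Markov property, together with the observation made immediately below Assumption~\ref{ass:minor}, extends this lower bound to all starting points $y\in F=\{0\leq\xi\leq 1\}$. Letting $\mathcal{G}_k$ denote the $\sigma$-algebra generated by the first $k$ simulated trajectories (together with the labels $N_j$ and $M_j$ used up to step $k$), the starting state of the $(k+1)$-th trajectory is $\mathcal{G}_k$-measurable and lies in $F$; moreover, by the strong Markov property applied to $Y$, the $(k+1)$-th trajectory is conditionally distributed as a fresh run of $Y$ from that state. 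Hence, conditional on $\mathcal{G}_k$, the probability that the $(k+1)$-th trajectory is a success is at least $p_{\min}$.

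Finally, by the standard coupling with i.i.d.\ Uniform$[0,1]$ variables, one can realize our sequence of success indicators on a common probability space together with an i.i.d.\ Bernoulli$(p_{\min})$ sequence in such a way that our indicators dominate the Bernoulli ones term by term. Hence $N+J_1$, the first trial at which the cumulative number of successes hits $N$, is stochastically dominated by a negative binomial random variable of parameters $(N,p_{\min})$, of mean $N/p_{\min}$. This gives $\E[J_1]\leq N(1-p_{\min})/p_{\min}<\infty$ and in particular $J_1<\infty$ almost surely. The main obstacle will be the measure-theoretic bookkeeping: one must define the filtration $\mathcal{G}_k$ carefully and verify that the strong Markov property does apply to each freshly simulated trajectory given its $\mathcal{G}_k$-measurable starting state; once this point is settled, the stochastic domination step is routine.
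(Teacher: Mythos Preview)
Your proposal is correct and follows essentially the same route as the paper: both exploit the uniform lower bound $p_{\min}=\inf_{y\in\{0\le\xi\le 1\}}\P_y(S_1<S_A)>0$ from Assumption~\ref{ass:minor} to dominate the branching count by a geometric-type variable and conclude $\E[J_1]<\infty$. The only organizational difference is that the paper decomposes $J_1=\sum_{n=1}^N\mathcal J_n$ by particle index and bounds each $\mathcal J_n$ by a geometric variable (yielding $\E[\mathcal J_n]\le 1/p_{\min}$), whereas you enumerate all simulated trajectories globally and dominate $N+J_1$ by a negative binomial; both arrive at $\E[J_1]\le N/p_{\min}$.
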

\begin{proof} 
Define
\begin{align*}
\varepsilon:=\inf_{y: \, 0 \leq \xi(y) \leq 1}\P_{y}\p{ S_{1} < S_A}>0.
\end{align*}
Denote ${\cal J}_n$ the total number of branchings of particle $n$ during the algorithm, and as before $J_1=\sum_{n=1}^N {\cal J}_n$ the total number of branchings. Clearly, we have that $\P({\cal J}_n\geq j)\leq (1-\varepsilon)^j$, so
$$\E[{\cal J}_n]=\sum_{j=1}^\infty \P({\cal J}_n\geq j)\leq 1/\varepsilon.$$ 
We conclude that $\E[J_1]\leq N/\varepsilon<+\infty$, as desired.

\end{proof}

\section{Removing the condition $S_A \wedge S_1 < + \infty$}\label{sec:non_stop_path}

The following property enables us to deal with transient cases where the condition $S_A \wedge S_1 < + \infty$ is not satisfied almost surely, which means that the event $S_t=S_A=+ \infty$ may happen with positive probability.
\begin{Lem}\label{lem:ext}
 Under Assumptions~\ref{ass:feller}, \ref{ass:up} and~\ref{ass:minor}, then almost surely, either $S_1 < + \infty$ or $\xi(Y_s) < 0$ for $s$ large enough, that is $\sup\set{ s \geq 0,\ \xi(Y_{s}) \geq 0} < + \infty$. 
\end{Lem}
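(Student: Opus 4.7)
Since Assumption~\ref{ass:minor} guarantees $S_1\wedge S_A<+\infty$ almost surely, the plan is a short two-case analysis. On the event $\{S_1<+\infty\}$, the first alternative of the dichotomy holds trivially and there is nothing to prove. Otherwise the event $\{S_1=+\infty\}$ forces $S_A<+\infty$ almost surely; by the standing convention that the process $(Y_s)_{s\ge 0}$ is stopped at $S_1\wedge S_A=S_A$, one has $Y_s=Y_{S_A}$ for every $s\ge S_A$. Hence the whole statement reduces to showing that $\xi(Y_{S_A})<0$ almost surely on this event: once this strict inequality is secured, we immediately get $\sup\{s\ge 0:\xi(Y_s)\ge 0\}\le S_A<+\infty$ and the second alternative follows.

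To establish $\xi(Y_{S_A})<0$, I would combine Assumption~\ref{ass:feller} (continuity of $Y$ and $\xi$, together with $A\subset\{\xi<0\}$) with the ``down'' part~\eqref{eq:down} of Assumption~\ref{ass:up}, extended to $\{0\le\xi\le 1\}$ via~\eqref{eq:downbis}. Continuity of the trajectories readily gives $Y_{S_A}\in\bar A$ and therefore $\xi(Y_{S_A})\le 0$. The delicate step is then to exclude the equality $\xi(Y_{S_A})=0$: applying the strong Markov property together with the identity $S_{\bar A}=S_{\mathring A}$ shows that the trajectory actually enters $\mathring A$, so that $Y_{S_A}\in\overline{\mathring A}$, which precludes absorption on the boundary $\bar A\cap\{\xi=0\}$ and yields the strict inequality.

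I expect the only genuine obstacle to lie in this last boundary-exclusion argument: ruling out that the process is absorbed exactly on $\bar A\cap\{\xi=0\}$ is where the full force of Assumption~\ref{ass:up} is needed, whereas the rest of the proof is a direct dichotomy driven by the finiteness in Assumption~\ref{ass:minor} and the stopping convention. No further probabilistic estimate (e.g.~Borel--Cantelli over successive excursions returning to $\{\xi=0\}$) is required at this stage, because the stopping of $(Y_s)$ at $S_1\wedge S_A$ already rules out any behaviour of the trajectory beyond time $S_A$.
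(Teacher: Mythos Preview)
Your argument misses the purpose of the lemma and, even taken on its own terms, has a real gap.

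\textbf{Purpose.} Lemma~\ref{lem:ext} sits in the section ``Removing the condition $S_A\wedge S_1<+\infty$''. The preceding sentence says explicitly that the lemma is meant to handle the transient case where $S_A\wedge S_1=+\infty$ with positive probability. Accordingly, the paper's proof uses only the infimum condition in Assumption~\ref{ass:minor}, not the finiteness condition, and it does not invoke the stopping convention. Your very first sentence (``Since Assumption~\ref{ass:minor} guarantees $S_1\wedge S_A<+\infty$\ldots'') and your reliance on the process being frozen at $S_A$ therefore assume away exactly the situation the lemma is designed to cover. In that regime the dichotomy ``$S_1<+\infty$ or $S_A<+\infty$'' is simply unavailable, and nothing rules out behaviour of the trajectory beyond any finite time.

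\textbf{The gap.} Even if one grants you the finiteness condition and the stopping convention, your boundary-exclusion step does not go through. From $S_{\bar A}=S_{\mathring A}$ you only get $Y_{S_A}\in\overline{\mathring A}$, and Assumption~\ref{ass:feller} gives merely $A\subset\{\xi<0\}$, hence $\overline{\mathring A}\subset\bar A\subset\{\xi\le 0\}$. Nothing prevents $\overline{\mathring A}\cap\{\xi=0\}\neq\emptyset$ (take $A=\{\xi<0\}$, for which $\overline{\mathring A}=\{\xi\le 0\}$). So $\xi(Y_{S_A})=0$ is not excluded, and then the stopped trajectory satisfies $\xi(Y_s)=0$ for all $s\ge S_A$, contradicting the second alternative.

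\textbf{What the paper actually does.} Precisely the excursion argument you dismiss in your last paragraph. Using only $2\eps:=\inf_{y\in\{\xi=0\}}\P_y(S_1<S_A)>0$, one builds an increasing sequence of stopping times $(\sigma_0^n)_n$ marking successive returns to $\{\xi\ge 0\}$, separated by deterministic waiting times $s(Y_{\sigma_0^n})\in\N_+$ chosen so that on each excursion the probability of reaching level~$1$ is at least $\eps$. Iterated conditioning then gives $\P(S_1\ge\sigma_0^{n+1},\ \sigma_0^{n+1}<+\infty)\le(1-\eps)^{n+1}$, and since $\sigma_0^n\to+\infty$ this forces $\P(S_1=+\infty,\ \sup\{s:\xi(Y_s)\ge 0\}=+\infty)=0$. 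No use of $S_A\wedge S_1<+\infty$ or of the stopping convention is made.
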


\begin{proof}
  Let $y=Y_0 \in \set{0 \leq \xi \leq 1}$ be any initial condition. 
 By Assumption~\ref{ass:minor}, we have
  $$2\eps: = \inf_{y \in \set{\xi = 0} }\P_{y}\p{ S_{1} < S_A}=\inf_{y \in \set{0\leq \xi \leq 1} }\P_{y}\p{ S_{1} < S_A} > 0.$$
  In particular, this implies that 
  $$\sup_{y \in \set{0\leq \xi \leq 1}}\P_y(S_1=+\infty)\leq 1-2\eps.$$
  For each $y$, a simple dominated convergence argument shows that we can construct a measurable function $s: \set{0 \leq \xi \leq 1} \to \N_+$ such that 
  $$
  \sup_{y \in \set{0 \leq \xi \leq 1}}\P_{y}\p{ S_{1} < s(y)} \leq 1-\eps.
  $$ 
  Consider the increasing double sequence of stopping times
  $$ \sigma^{1}_0:=0 < \sigma_{1}^1 \leq \sigma^2_0 < \sigma^2_{1} \leq  \ldots $$
  defined for each $n \geq 1$ by
  $$
  \begin{cases}
   \sigma^n_0 = \inf \set{s \geq \sigma^{n-1}_1,\ Y_s \in \set{\xi \geq 0}} \\
   \sigma^n_1 = \sigma^n_0 + s(Y_{\sigma^n_0}) 
  \end{cases}
  $$
  By construction, we have the implication
  $$ \sup\set{ s \geq 0,\ \xi(Y_{s}) \geq 0} = + \infty\ \Rightarrow\ \sup_n\sigma^n_0 < + \infty,   $$
  so that it remains to prove that
 $$ \P(\{S_1 = + \infty\}\cap\{\sup_n\sigma^n_0 < + \infty\})= 0.$$
  The strong Markov property as well as the definition of $s(y)$ imply that 
  \begin{align*}
   & \P(\{S_1 \geq \sigma^{n+1}_0\}\cap\{\sigma^{n+1}_0 < + \infty\}  | Y_{\sigma^{n}_0},S_1 \geq \sigma^{n}_0,\sigma^{n}_0 < + \infty)  \\
   &\qquad \leq \P(S_1 \geq \sigma^{n}_1 | Y_{\sigma^{n}_0},S_1 \geq \sigma^{n}_0,\sigma^{n}_0 < + \infty) \leq 1-\eps. 
  \end{align*}
  Iterating the conditioning yields
$$ \P(\{S_1 \geq \sigma^{n+1}_0\}\cap\{\sigma^{n+1}_0 < + \infty\}) \leq (1-\eps)^{n+1}, $$
so that by $\sigma$-additivity 
  $$ \P(\{S_1 \geq \lim_n  \sigma^{n}_0\}\cap\{\sup_n\sigma^{n}_0 < + \infty\})=0.$$
The result follows since $\lim_n  \sigma^{n}_0  = + \infty$.
\end{proof}

\section{A variant of Assumption~\ref{ass:minor}}\label{sec:var_ass}
The following variant of Assumption~\ref{ass:minor} may be useful in practice.
\renewcommand{\theAssN}{3'}
\begin{AssN}\label{ass:minor_bis}  There exists $t_0 \leq 0$ such that $A\subset \{\xi< t_0\}$, the level set $\set{\xi = t_0}$ is compact, and 
\begin{align}\label{eq:minor}  
&\forall y\in \set{ \xi = t_0},~\P_y(S_1 < S_{\bar A}) > 0.
\end{align}
\end{AssN}

Indeed, one has the following implication of assumptions.
\begin{Lem}\label{lem:2point4}
If Assumptions \ref{ass:feller}, \ref{ass:up}, and \ref{ass:minor_bis} are satisfied, then so is Assumption~\ref{ass:minor}.
%
\end{Lem}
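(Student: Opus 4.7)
The plan is to obtain a uniform positive lower bound on $y \mapsto \P_y(S_1 < S_{\bar A})$ first on the compact level set $K \eqdef \{\xi = t_0\}$ via continuity plus compactness, then transfer it to $\{\xi = 0\}$ through the strong Markov property; the almost sure finiteness of $S_1 \wedge S_A$ will follow by a companion iteration.

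First I would show that $\Phi(y) \eqdef \P_y(S_1 < S_{\bar A})$ is (lower semi-)continuous on $K$ by carefully applying Lemma~\ref{lem:time_fin} to the Borel set $B \eqdef \bar A \cup \{\xi \geq 1\}$. Continuity of $\xi$ forces $\bar A \subseteq \{\xi \leq t_0\}$, so the two pieces of $B$ are disjoint closed sets, and the event $\{S_1 < S_{\bar A}\}$ rewrites as $\{Y_{S_B} \in \{\xi \geq 1\},\ S_B < +\infty\}$. The hypothesis $\P_y(S_{\bar B} = S_{\mathring B}) = 1$ then splits into $S_{\{\xi \geq 1\}} = S_{\{\xi > 1\}}$, which follows from~\eqref{eq:up} and the strong Markov property at the first hitting time of level~$1$, and $S_{\bar A} = S_{\mathring{\bar A}}$, which is reduced by strong Markov to the case $y \in \{\xi = 0\}$ already covered by~\eqref{eq:downbis}. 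Compactness of $K$ together with the pointwise positivity~\eqref{eq:minor} then yields $\alpha \eqdef \min_{y \in K} \Phi(y) > 0$. For $y \in \{\xi = 0\}$, set $T_K \eqdef \inf\{s \geq 0 : \xi(Y_s) = t_0\}$; the inclusion $A \subset \{\xi < t_0\}$ and continuity of $s \mapsto \xi(Y_s)$ force $T_K \leq S_{\bar A} \leq S_A$ almost surely, and decomposing $\{S_1 < S_{\bar A}\}$ along $\{S_1 < T_K\}$ (trivially in the target event) and $\{T_K \leq S_1\}$ (where the strong Markov property at $T_K$ contributes a factor $\alpha$) produces
\[
\P_y(S_1 < S_A)\ \geq\ \P_y(S_1 < S_{\bar A})\ \geq\ \alpha\, \P_y(S_1 \wedge T_K < +\infty).
\]

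The almost sure finiteness of $S_1 \wedge S_A$ and the corresponding uniform positivity of $\P_y(S_1 \wedge T_K < +\infty)$ over $\{\xi = 0\}$ are both obtained by iterating this scheme: each entry into $K$ carries probability at least $\alpha$ of exit to $\bar A \cup \{\xi \geq 1\}$ in finite time, and a Borel--Cantelli estimate on the successive return times to $K$ forces exit almost surely. The main obstacle will be ruling out trajectories starting from $\{\xi = 0\}$ that remain forever in the open strip $\{t_0 < \xi < 1\}$ without ever reaching $K$ or level~$1$: this scenario is not excluded by the pointwise positivity alone and requires additional input from the Feller property combined with the local compactness of $E$ in Assumption~\ref{ass:feller}. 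A secondary technicality is the verification, in the continuity step, of $\P_y(S_{\bar A} = S_{\mathring{\bar A}}) = 1$ for $y \in K$ with $t_0 < 0$: trajectories that descend from $K$ directly into $\bar A$ without revisiting $\{\xi = 0\}$ call for a separate strong Markov argument relying on the semi-continuity results of Lemma~\ref{lem:stop_time_cont}.
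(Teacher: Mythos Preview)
Your overall architecture---lower semi-continuity on the compact level set $K=\{\xi=t_0\}$, then transfer to $\{\xi=0\}$ by the strong Markov property---is exactly the paper's. The differences are in the execution of both steps, and in each case the paper's route is shorter and dissolves the obstacle you flag.

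\textbf{Lower semi-continuity.} You aim for continuity of $\Phi$ via Lemma~\ref{lem:time_fin} applied to $B=\bar A\cup\{\xi\geq1\}$, which forces you to verify $\P_y(S_{\bar A}=S_{\mathring{\bar A}})=1$ for $y\in K$. When $t_0<0$, Assumption~\ref{ass:up} gives this only from $\{\xi=0\}$, and your proposed workaround via Lemma~\ref{lem:stop_time_cont} is vague. The paper bypasses the issue entirely: it proves only \emph{lower} semi-continuity of $\Phi$, which is all that is needed. After a Skorokhod embedding (Lemma~\ref{lem:cont_feller}), Lemma~\ref{lem:stop_time_cont} yields $\lim_n S_1^n=S_1$ (using~\eqref{eq:up}) and $\liminf_n S_{\bar A}^n\geq S_{\bar A}$ (this direction holds for any closed set, no extra hypothesis required). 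Then
\[
\{S_1<S_{\bar A}\}\subset\{S_1<\liminf_n S_{\bar A}^n\}\subset\liminf_n\{S_1^n<S_{\bar A}^n\},
\]
so $\Phi(y)\leq\liminf_n\Phi(y^n)$. Your secondary technicality simply does not arise.

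\textbf{Transfer step and the finiteness condition.} With $\alpha=\inf_K\Phi>0$ in hand and $\sigma_0$ the hitting time of $\{\xi=t_0\}$, the paper writes, for $y\in\{0\leq\xi\leq1\}$,
\[
\P_y(S_1<S_{\bar A})\geq\alpha\,\P_y(\sigma_0<S_1)+\P_y(S_1\leq\sigma_0)\geq\alpha,
\]
the middle inequality using that $\{S_1\leq\sigma_0,\ S_1<+\infty\}\subset\{S_1<S_{\bar A}\}$ since $\sigma_0\leq S_{\bar A}$. This replaces your factor $\P_y(S_1\wedge T_K<+\infty)$. Your ``main obstacle''---trajectories trapped forever in $\{t_0<\xi<1\}$---is the event $\{S_1=\sigma_0=+\infty\}$; the paper does not prove it is null here. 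In the paper's framing, Assumption~\ref{ass:minor_bis} is introduced explicitly as a variant of the \emph{infimum} condition in Assumption~\ref{ass:minor}, while the condition $S_A\wedge S_1<+\infty$ is a standing technical convention handled separately (see the remark after Assumption~\ref{ass:minor} and Section~\ref{sec:non_stop_path}). So you are attempting to prove more than the paper does; restrict your goal to the infimum bound and both obstacles disappear.
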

%
\begin{proof}
First, suppose that Assumptions~\ref{ass:feller} and~\ref{ass:up} are satisfied. We claim that the mapping 
$$
 y \mapsto \P_y(S_1 < S_{\bar A})
$$
is lower semi-continuous on $\set{ 0 \leq \xi \leq t_0}$, in the sense that if $y^n \to y$, then
$$ \P_{y}(S_1 < S_{\bar A})  \leq \liminf_n \P_{y^n}(S_1 < S_{\bar A}).$$ 
Note that Lemma~\ref{lem:H1ii} already implies that this mapping is continuous on $\set{0 \leq \xi \leq 1}$.\medskip

The proof of the claim is similar to the one of Lemma~\ref{lem:time_fin}. Indeed, using Lemma~\ref{lem:cont_feller} and a Skorokhod embedding argument, a sequence $(Y^n_s)_{s \geq 0}$ of Feller processes with initial conditions $(y^n)_{n \geq 0}$ can be constructed on a single probability space so that $\lim_n Y^n = Y$ in $C(\R_+,E)$ almost surely, where $Y$ denotes the Feller process with initial condition $y$. Then Lemma~\ref{lem:stop_time_cont} with~\eqref{eq:time_cond} implies that $\lim_n S^n_{1} = S_1$ as well as $\liminf_n S^n_{\bar A} \geq S_{\bar A}$. But obviously
\begin{align*}
  \set{S_1 < S_{\bar A}}  \subset \set{S_1 < \liminf_n S^n_{\bar A} } \subset \bigcup_{N} \bigcap_{n \geq N} \set{S^n_1 < S^n_{\bar A} },
\end{align*}
so that $\P \p{S_1 < S_{\bar A}} \leq \liminf_n \P\p{S^n_1 < S^n_{\bar A} }$, hence the claim.\medskip

Next, suppose that Assumptions~\ref{ass:feller}, \ref{ass:up}, and~\ref{ass:minor_bis} hold true. For any initial condition $y \in \set{0 \leq \xi \leq 1}$,  denote $\sigma_0 \eqdef \inf \{ s \geq 0,\ \xi(Y_s) =t_0\}$. By the strong Markov property for Feller processes, we may write:
\begin{align*}
 \P_y(S_1 < S_{\bar A}) &= \E_y \b{\un_{\sigma_0 < S_1} \P_{Y_{\sigma_0}}(S_1 < S_{\bar A})}+\P_y\p{S_1 < S_{\bar A}, S_1 \leq \sigma_0} \\
 & \geq \P_y(\sigma_0 < S_1)\inf_{z \in \set{\xi = t_0}} \P_{z}(S_1 < S_{\bar A})+(1-\P_y(\sigma_0 < S_1))\\
 & \geq \inf_{z \in \set{\xi = t_0}} \P_{z}(S_1 < S_{\bar A}).
\end{align*}
Because a lower semi-continuous function on a compact  reaches its infimum, and using Assumption~\ref{ass:minor_bis}, we get that the latter infimum is $>0$,
hence Assumption~\ref{ass:minor}.
\end{proof}


\section{Proof of Theorem~\ref{th:clt_path}}\label{sec:path_proof}
Theorem~\ref{th:clt_path} is a pathwise extension of Theorem~\ref{gamma} and is stated under the same set of assumptions, namely Assumptions~\ref{ass:feller}, \ref{ass:up}, and~\ref{ass:minor}. The proof follows the same line 
as the latter. The main difference consists in the definition of the level-indexed process and its state space, which is augmented in order to include pathwise information. \medskip

Once the appropriate definition of the level-indexed objects is set up, the interpretation of the AMS algorithm 
as a Fleming-Viot particle system is strictly identical to the specific case described in Section~\ref{sec:FV}. 
From there, it is then sufficient to check Assumptions~\ref{ass:H1} and~\ref{ass:H2} once again thanks to 
Assumptions~\ref{ass:feller}, \ref{ass:up}, and~\ref{ass:minor}, but in a more general pathwise context. \medskip

First, we define the extended level-indexed process as well as its state space. 
To do so, Definition~\ref{def:lev_ind} is generalized as follows. The extended state space $F \cup \set{ \partial}$ 
is now defined by
\begin{align}
F \eqdef& \left\{  y_{[0,s]} \in C_{\rm stop}(\R_+,\set{0 \leq \xi \leq 1} ) \right. \nonumber \\ &\quad \left.  \text{ such that } s < + \infty, \, \xi(y_0) = 0 
\text{\, and \,}\forall s'\in  \left[0,s\right),\, \xi(y_{s'}) <  \xi(y_s)\right\}, \label{eq:F_path}
\end{align}
which is the set of trajectories where the maximum value of $\xi$ is reached only at the endpoint. 
This specific choice of the state space $F$ is adapted to the following construction of the level-indexed process. An initial condition $\calX_0 \eqdef y_{[0,s]} \in F$ being given, we define the level-indexed process as
\[
\calX_h \eqdef 
  \begin{cases}
    Y_{[0,S_{(\xi(y_s) + h)\wedge 1}]}  &\mbox{if}\,  S_{(\xi(y_s) + h)\wedge 1} < S_A,  \\
    \partial &\mbox{if}\,  S_{(\xi(y_s) + h)\wedge 1} \geq S_A.
  \end{cases}
\]
In the above, we have taken as initial condition $Y_{[0,s]}=y_{[0,s]}$ in order to define the underlying Feller process $Y$. Note that in the simpler, usual case, where the initial condition is $\calX_0 = y_0 \in \set{\xi = 0}$, then 
$$
\calX_t = \begin{cases}
    Y_{[0,S_{t}]}  &\mbox{if}\,  S_{t} < S_A,  \\
    \partial &\mbox{if}\,  S_{t} \geq S_A.
  \end{cases}
$$
As before, this rather complicated definition of $\calX$ is required in order to interpret it as a time homogeneous Markov process. 
\begin{Lem}\label{lem:calX}
The set $F$ defined by~\eqref{eq:F_path} is a Borel subset of the Polish space $C_{\rm stop}\p{\R_+,\set{0 \leq \xi \leq 1}}$. $(\calX_h)_{h \geq 0}$ is a c\`adl\`ag process taking values in $F \cup \partial$, which is time homogeneous Markov with respect to its natural filtration.
\end{Lem}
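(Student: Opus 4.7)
The plan is to establish the three claims in order: (i) $F$ is Borel in the Polish space $C_{\rm stop}(\R_+,\set{0\le\xi\le 1})$; (ii) $(\calX_h)_{h \geq 0}$ is a c\`adl\`ag process with values in $F\cup\set{\partial}$; and (iii) $\calX$ is time homogeneous Markov with respect to its natural filtration.

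For (i), I would decompose $F$ as the intersection of three measurable conditions. The constraint $s<+\infty$ defines an open subset of $C_{\rm stop}$, since $\set{s<M}$ is open for each finite $M$. The constraint $\xi(y_0)=0$ is closed by continuity of $\xi$ and of the evaluation map $y_{[0,s]}\mapsto y_0$. The strict-maximum constraint $\xi(y_{s'})<\xi(y_s)$ for all $s'\in [0,s)$ can be rewritten, using continuity of $\xi\circ y$ on the compact $[0,s]$, as the countable intersection over $n\geq 1$ of the events $\set{s>1/n\ \text{and}\ \max_{s'\in [0,s-1/n]}\xi(y_{s'})<\xi(y_s)}$, each of which is open in $C_{\rm stop}$ by continuity of the associated maximum functional. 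Hence $F$ is Borel.

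For (ii), set $t_h \eqdef (\xi(y_s)+h)\wedge 1$. When $\calX_h\neq \partial$, $\calX_h = Y_{[0,S_{t_h}]}$ is a continuous finite path starting at $y_0\in \set{\xi=0}$, with $\xi(Y_{S_{t_h}})=t_h$ by~\eqref{aeicvazich}. To see $\calX_h\in F$, note that $\xi(Y_{s'})\le t_h$ for $s'<S_{t_h}$ by the first-entrance definition; if equality $\xi(Y_{s'})=t_h$ held at some $s'<S_{t_h}$, Assumption~\ref{ass:up} combined with the strong Markov property would yield $S_{t_h}= s'$ almost surely, a contradiction. The c\`adl\`ag property of $h\mapsto \calX_h$ in $C_{\rm stop}$ follows from right-continuity of $h\mapsto S_{t_h}$ (built into the first-entrance definition) and the existence of left limits (as an increasing limit of stopping times), combined with continuity of $Y$ and the definition of the topology on $C_{\rm stop}$.

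For (iii), given a state $\calX_h=x_{[0,s_x]}\in F$, the endpoint $x_{s_x}$ satisfies $\xi(x_{s_x})=t_h$, and by the strong Markov property of the Feller process $Y$ at the stopping time $S_{t_h}$, the shifted process $(Y_{S_{t_h}+\cdot})$ is a Feller process with initial condition $x_{s_x}$ independent of $\calF^Y_{S_{t_h}}$. Since the natural filtration of $\calX$ up to $h$ is included in $\calF^Y_{S_{t_h}}$ (the paths $\calX_{h'}$ for $h'\le h$ being restrictions of $Y_{[0,S_{t_h}]}$), the future $(\calX_{h+h'})_{h'\geq 0}$ is the continuous concatenation (via the map of Lemma~\ref{lem:map_ext}) of $x_{[0,s_x]}$ with the level-indexed process built from this shifted Feller process, and its conditional law depends only on the endpoint $x_{s_x}$, hence only on the state $x$, and not on $h$; this yields both the Markov property and time homogeneity. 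The main technical obstacle I anticipate is the careful bookkeeping needed for the Borel structure of $F$ and for the compatibility of the concatenation operation with the Polish topology on $C_{\rm stop}$, the latter being precisely the content of Lemma~\ref{lem:map_ext}.
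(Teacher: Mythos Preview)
Your approach is essentially the same as the paper's, and in fact more detailed on points~(ii) and~(iii), where the paper is terse (it simply invokes the strong Markov property of $Y$ for the Markov claim and does not spell out the c\`adl\`ag argument at this spot).

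There is one slip in your treatment of~(i): as written, the countable intersection
\[
\bigcap_{n\ge1}\set{s>1/n\ \text{and}\ \max_{s'\in[0,\,s-1/n]}\xi(y_{s'})<\xi(y_s)}
\]
forces $s>1$ (from the $n=1$ term), so short paths with $0<s\le 1$ would be wrongly excluded. The paper avoids this by intersecting instead the open sets
\[
\set{y_{[0,s]}:\ \xi(y_{s'})<\xi(y_s)\ \forall s'\le 0\vee(s-1/q)},\qquad q\in\N^\star,
\]
so that when $s\le 1/q$ the condition degenerates to $\xi(y_0)<\xi(y_s)$ rather than becoming empty. With this correction (or equivalently, making your $n$-th event vacuous when $s\le 1/n$), your argument goes through.
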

\begin{proof}
First, $F$ can be constructed using the countable intersection of open subsets of the form
$$\set{ y_{[0,s]} \in C_{\rm stop}\p{ \R_+,\set{0 \leq \xi \leq 1}}, \;\; \xi(y_{ s'}) <  \xi(y_s) \;\forall s' \leq 0 \vee (s - 1/q) },$$ where $ q \in \N^\star$. As a consequence $F$ is a Borel subset.\medskip

Second, as in Section~\ref{sec:FV}, the time homogeneous Markov property is a direct consequence of the strong Markov property of $Y$.

\end{proof}

We now wish to check Assumptions~\ref{ass:H1} and~\ref{ass:H2} in order to prove the pathwise CLT Theorem~\ref{th:clt_path}. 

\begin{Lem}
 Under Assumptions~\ref{ass:feller}, \ref{ass:up}, and~\ref{ass:minor}, Assumptions~\ref{ass:H1} and~\ref{ass:H2} hold true for the pathwise level-indexed process $\calX$.
\end{Lem}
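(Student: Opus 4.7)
The plan is to revisit the three statements of Lemma~\ref{lem:check_ass} in the pathwise setting, replacing the endpoint-valued continuity tools of Lemmas~\ref{lem:jump_dens},~\ref{lem:H1ii} and~\ref{lem:H2} by their pathwise counterparts provided by Lemma~\ref{lem:time_fin_path} and Lemma~\ref{lem:map_ext}. Throughout, for an initial condition $\calX_0 = y_{[0,s]}\in F$ let us write $t_h = (\xi(y_s)+h)\wedge 1$ and $B\eqdef A\cup\set{\xi>1}$.

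First, for Assumption~\ref{ass:H1}(i), I would observe that a discontinuity $\calX_h\neq \calX_{h^-}$ in the Polish topology of Definition~\ref{def:path_topo} can arise only from a jump of the endpoint time $S_{t_h}$, since $h\mapsto t_h$ is continuous and the stopped trajectory $Y_{[0,S_{t_h}]}$ grows monotonically from the initial path $y_{[0,s]}$. Hence Lemma~\ref{lem:jump_dens}, applied to the underlying Feller process started at $y_s$, gives directly that the jump times of $\calX$ are atomless.

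Second, for Assumption~\ref{ass:H1}(ii) — which is the main technical point — I would use the strong Markov property of $Y$ to rewrite, for any converging sequence $y^n_{[0,s^n]}\to y_{[0,s]}$ in $F$,
$$q(\psi)(y_{[0,s]}) = \E_{y_s}\b{\psi\p{T\p{y_{[0,s]}, \widetilde Y_{[0,\widetilde S_B]}}}\,\un_{\xi(\widetilde Y_{\widetilde S_B})=1}},$$
where $T$ is the continuous concatenation map of Lemma~\ref{lem:map_ext} and $\widetilde Y$ is an independent copy of the Feller process starting from $y_s$, with $\widetilde S_B$ its first hitting time of $B$. Assumption~\ref{ass:up} yields $S_{\bar B}=S_{\mathring B}$ almost surely under $\P_{y_s}$, exactly as in the proof of Lemma~\ref{lem:H1ii} where one writes ${\bar B}\subset{\bar A}\cup\set{\xi\geq 1}$. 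Lemma~\ref{lem:time_fin_path} then delivers convergence in distribution of the stopped path $\widetilde Y_{[0,\widetilde S_B]}$ under the converging endpoints $y^n_{s^n}\to y_s$, and combining with the continuity of $T$, the continuity and boundedness of $\psi$, and a bounded convergence argument, one concludes. The main obstacle will be to handle the indicator $\un_{\xi(\widetilde Y_{\widetilde S_B})=1}$ cleanly at the limit, but this is resolved via the identity $S_{\bar B}=S_{\mathring B}$, as in the non-pathwise setting.

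Third, for Assumption~\ref{ass:H2}, I would simply remark that the sequence of branching times $(\tau_j)$ of the pathwise Fleming-Viot system coincides with that of the scalar construction from Section~\ref{sec:FV}, since these are determined solely by the running maxima of $\xi$ along each particle's trajectory, which is a functional of $Y$ itself and does not depend on whether one tracks endpoints or full paths. Hence the geometric comparison argument of Lemma~\ref{lem:H2}, which relies only on Assumption~\ref{ass:minor}, applies verbatim and gives $\E[J_1]\leq N/\varepsilon<+\infty$.
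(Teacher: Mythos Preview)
Your proposal is correct and follows essentially the same route as the paper. The only presentational difference is in part~(ii): you decompose $q(\psi)$ via the strong Markov property and the concatenation map $T$ of Lemma~\ref{lem:map_ext}, then invoke Lemma~\ref{lem:time_fin_path} for the tail process $\widetilde Y$ started at the endpoint $y_s$; the paper instead applies Lemma~\ref{lem:time_fin_path} directly with the full pathwise initial condition $y_{[0,s]}$ (the concatenation being already built into that lemma's proof), and handles the indicator by writing it as a functional $\tilde\psi$ that is continuous on $C_{\rm stop}$ since $\set{\xi=1}$ and $\bar A$ are disjoint closed sets. Both arguments are equivalent, and your treatment of parts~(i) and~(H2) matches the paper's exactly.
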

\begin{proof}

The fact that Assumption~\ref{ass:H2} follows from Assumption~\ref{ass:minor} has already been established in Lemma~\ref{lem:H2}.\medskip

Assumption~\ref{ass:H1}(i) in the pathwise case is similar to the proof of Lemma~\ref{lem:H1i}, which follows from Lemma~\ref{lem:jump_dens}, namely the fact that $S_{t^-}=S_t$ almost surely. Let us give some details. Let $\calX_0 = y_{[0,s_0]}$ be a given initial condition with initial level $t_0 = \xi(y_{s_0})$. The topology of the space $C_{\rm stop}(\R_+,\set{0 \leq \xi \leq 1} )$ implies that the mapping $s \mapsto y_{[0,s]} \in F $ which spans the same trajectory with different end times is continuous at $s=s_1$ if $ s \mapsto y_s$ is. As a consequence, as in Lemma~\ref{lem:H1i}, since $Y$ is a continuous trajectory, and $t \mapsto S_t$ is a c\`adl\`ag  increasing process, $\calX$ is also c\`adl\`ag and has a jump at $h$ only if $S_{(t_0+h)\wedge 1}$ has one. The proof then follows from Lemma~\ref{lem:jump_dens}. \medskip

The only new technical point is to check Assumption~\ref{ass:H1}(ii), that is to say the continuity of
$$
y_{[0,s]} \mapsto \E \b{ \psi(Y_{[0,S_1]}) \one_{S_1 < S_A} | Y_{[0,s]} = y_{[0,s]}},
$$
where $y_{[0,s]} \in F$, $\psi$ is continuous and bounded on $C_{\rm stop}\p{\R_+,\set{0 \leq \xi \leq 1}}$. This is a pathwise version of Lemma~\ref{lem:H1ii}, and in fact a consequence of the pathwise continuity property stated in Lemma~\ref{lem:time_fin_path} which follows from Assumption~\ref{ass:up}. Indeed the latter states that if $\lim_n y^n_{[0,s^n]} = y_{[0,s]}$ is a converging sequence of initial conditions in $F$, and $\tilde \psi$ is a continuous functional on $C_{\rm stop}(\R_+,\set{0 \leq \xi \leq 1} )$ then $\E \b{ \tilde \psi(Y_{[0,S_1 \wedge S_A ]})| Y_{[0,s]} = y^n_{[0,s^n]}}$ is converging to the corresponding limit. It remains to remark that Assumption~\ref{ass:H1}(ii) is precisely this continuity property for the functional
$$
\begin{cases}
 \tilde \psi(y_{[0,s]}) = \psi(y_{[0,s]}) \qquad &\text{if \, } \xi(y_s) = 1 \\
 \tilde \psi(y_{[0,s]}) = 0 \qquad &\text{if \, } y_s \in \bar A
\end{cases}
$$
which is indeed continuous under the $C_{\rm stop}$ topology.\medskip

Note that we have assumed that $S_1 \wedge S_A<+\infty$ according to Assumption~\ref{ass:minor}. Otherwise, $\tilde \psi$ must be extended by $0$, the continuity of the extension following from Lemma~\ref{lem:ext}.
%
\end{proof}

\paragraph{Acknowledgments.} The authors thank Ismael Bailleul for the reference on stochastic waves~\cite{MR0682731}, and Laurent Miclo for a fruitful discussion at the beginning of this work.

\bibliographystyle{plain}
\bibliography{biblio-cdgr}
\end{document}